\documentclass[12pt]{article}
\usepackage{amssymb,amsmath,graphicx}
\usepackage{theorem}
\usepackage[usenames]{color}
\usepackage{hyperref}

\date{October 11, 2010}

\topmargin -0.3in
\textheight 8.9in
\textwidth 6.5in
\oddsidemargin .0in
\evensidemargin .4in

%\def\red{\color{Red}}
%\def\blue{\color{Blue}}
%\def\olive{\color{Olive}}
%\def\green{\color{Green}}
%\def\mgreen{\color{MGreen}}
%\def\dgreen{\color{DGreen}}
%\def\yellow{\color{Yellow}}
%\def\cyan{\color{Cyan}}
%\def\mag{\color{Magenta}}
%\def\orange{\color{Orange}}
%\def\violet{\color{Violet}}
%\def\purple{\color{Purple}}
%\def\brown{\color{Brown}}
%\def\grey{\color{Grey}}
%
%\definecolor{Red}{rgb}{1,0,0}
%\definecolor{Blue}{rgb}{0,0,1}
%\definecolor{Olive}{rgb}{0.41,0.55,0.13}
%\definecolor{Green}{rgb}{0,1,0}
%\definecolor{MGreen}{rgb}{0,0.8,0}
%\definecolor{DGreen}{rgb}{0,0.55,0}
%\definecolor{Yellow}{rgb}{1,1,0}
%\definecolor{Cyan}{rgb}{0,1,1}
%\definecolor{Magenta}{rgb}{1,0,1}
%\definecolor{Orange}{rgb}{1,.5,0}
%\definecolor{Violet}{rgb}{.5,0,.5}
%\definecolor{Purple}{rgb}{.75,0,.25}
%\definecolor{Brown}{rgb}{.75,.5,.25}
%\definecolor{Grey}{rgb}{.5,.5,.5}

\def\bbR{{\mathbb R}}
\def\bbZ{{\mathbb Z}}

\def\sump{\!\!\mathop{\,\,{\sum}^{\,\prime}}}

\def\cC{{\cal C}}

\def\cP{{\cal P}}
\def\cS{{\cal S}}

\def\bW{{\bold W}}

\def\bV{{\bold V}}
\def\bc{{\bold c}}
\def\bE{{\bold E}}

\def\eps{\varepsilon}

\def\LA{\Lambda}
\def\La{\boldsymbol\Lambda}
\def\Om{{\Omega}}

\def\t{\tau}

\def\ttau{\tilde{\tau}}

\def\var{{\rm Var}}
\def\E{{\cal E}}
\def\cC{{\cal C}}
\def\bss{{\boldsymbol \sigma}}

\def\Int{{\text{\bf Int}\,}}
\def\Ext{{\text{\bf Ext}\,}}

\def\diam{{\text{\rm diam}}}
\def\dist{{\text{\rm dist}}}
\newcommand{\half}{\mbox{$\frac{1}{2}$}}
\renewcommand{\mod}{\text{ {\rm mod} }}
\def\ext{{\text{\rm ext}}}
\def\ord{{\text{\rm ord}}}
\def\dis{{\text{\rm dis}}}
\def\tun{{\text{\rm tunnel}}}

\def\sep{{\text{\rm sep}}}
\def\PW{\omega}

\newtheorem{theorem}{Theorem}[section]

\newtheorem{lemma}[theorem]{Lemma}

\newtheorem{corollary}[theorem]{Corollary}
\newtheorem{definition}[theorem]{Definition}
\numberwithin{equation}{section}

\def\proofof#1{\noindent{\bf Proof of #1}.}
\def\qed{\hfill$\Box$}

\begin{document}

\title{Tight Bounds for Mixing of the Swendsen-Wang
Algorithm at the Potts Transition Point}

\author{Christian Borgs%
\thanks{Microsoft Research, 1 Memorial Drive, Cambridge, MA 02124,
\{borgs, jchayes\}@microsoft.com}%
\and Jennifer T. Chayes$^*$ \and Prasad Tetali\thanks{School of
Mathematics and School of Computer Science, Georgia Tech,
Atlanta, GA 30332-0160, tetali@math.gatech.edu; research
supported in part by the NSF Grants DMS--9800351, DMS--0401239,
DMS--0701043.} }

\maketitle

\thispagestyle{empty}

\begin{abstract}
We study two widely used algorithms for the Potts model on
rectangular subsets of the hypercubic lattice  $\bbZ^d$
 -- heat bath dynamics
and the Swendsen-Wang algorithm --  and prove that, under
certain circumstances, the mixing in these algorithms is {\em
torpid} or slow.  In particular, we show that for heat bath
dynamics throughout the region of phase coexistence, and for
the Swendsen-Wang algorithm at the transition point, the mixing
time in a box of side length $L$ with periodic boundary
conditions has upper and lower bounds which are exponential in
$L^{d-1}$. This work provides the first upper bound of this
form for the Swendsen-Wang algorithm, and gives lower bounds
for both algorithms which significantly improve the previous
lower bounds that were exponential in $L/(\log L)^2$.
\end{abstract}

\section{Introduction}
\label{intro}

Convergence to equilibrium of heat bath dynamics and other
dynamics for several lattice spin models of statistical
mechanics has been of significant interest, for well over a
decade, in probability theory, statistical physics,
combinatorics and theoretical computer science.  While the
excellent monograph \cite{Fabio}
%, based on the 1997 St. Flour lectures of F.~Martinelli
provides a testament to this, many exciting new results and new
techniques have since been developed. Fine examples of this
development include, on the fast mixing front, results for
Glauber dynamics on trees \cite{BKMP}, for Swendsen-Wang
algorithm on various classes of graphs \cite{CF}, for a simple
random walk on the super critical percolation cluster
\cite{BM,FR}. On the slow mixing side, results on the
Swendsen-Wang for the Potts model on the complete graph
\cite{GJ}, the heat bath algorithm for the Ising model at low
temperature \cite{Thom}, and on quasi-local algorithms for the
hardcore lattice gas model at low temperature
\cite{confversion} form similarly interesting and technically
challenging examples.

 In this paper, we study two
Monte Carlo Markov chains (MCMC), heat bath dynamics and the
empirically more rapid Swendsen-Wang algorithm, for the
$q$-state
 Potts model. Our work is a continuation of the work
begun some time ago in collaboration with several other authors
\cite{confversion}, where we obtained weaker bounds than those
we establish here.

The point of our previous work was to relate the mixing times
of MCMC in several models, including the Potts model,
 to the phase structure of the underlying equilibrium models.
 The Potts model is known
to undergo a phase transition from a so-called disordered phase
with a unique equilibrium state to an ordered phase with the
coexistence of multiple equilibrium states. In our previous
work we showed that, for the $q$-state Potts model on
rectangular subsets of the hypercubic lattice $\bbZ^d$ with
periodic boundary conditions, heat bath dynamics is slow or
{\em torpid} throughout the region of phase coexistence, while
the Swendsen-Wang algorithm is torpid at the transition point,
provided that $q$ is large enough. There the lower bounds on
the mixing time in a box of side length $L$ with periodic
boundary conditions were exponential in $L/(\log L)^2$.  In
this paper, we show that the mixing is even slower, obtaining
essentially optimal results: both lower and upper bounds on the
mixing time which are exponential in $L^{d-1}$.

Slowness of the Swenden-Wang algorithm for the Potts model at
the transition point was proved first on the complete graph
\cite{GJ}. This result initially came as a surprise to many
physicists who had tacitly assumed that the algorithm was fast
at all temperatures. Our previous work \cite{confversion} was
the first to establish such a result on subsets of the
 hypercubic lattice, a case which is both more physically
relevant and technically much more challenging than the
complete graph.   To overcome these difficulties, we used some
deep results from mathematical physics, which we now extend. In
particular, our work brings to bear and extends, statistical
physics expansion techniques for the problem of controlling the
number of cutsets in graphical expansions of these models.
Specifically, we use the so-called Pirogov-Sinai theory
\cite{PS} from the statistical physics literature, in the form
adapted to the Potts model by Borgs, Koteck\'y and Miracle-Sole
(\cite{BK}, \cite{BKM}). We also use the  isoperimetric
inequalities of Bollob\'as and Leader \cite{iso}, as well as a
large deviations technique borrowed from \cite{B}.

For Markov chains that  change the value of only  a bounded
number of spins -- such as the heat bath algorithm, it is easy
to obtain upper bounds exponential in $L^{d-1}$ using either
refined canonical path arguments as in \cite{BKMP} or recursive
bounds on the Dirichlet form as in \cite{cesi}, see
\cite{Fabio} for a review.  However, for the Swendsen-Wang
algorithm, which is highly non-local in the spin
representation, such an upper bound is not obvious. However, it
turns out that a refinement of the bounds on the Dirichlet form
can be used to obtain the desired upper bound.  More generally,
introducing a new graph parameter which we call
the``decomposition width,'' we derive upper bounds for
Swendsen-Wang on arbitrary graphs, which as a special case
proves that Swendsen-Wang on trees is polynomial in the number
of vertices for all temperatures.

The real challenge is to obtain a {\it lower} bound which is
exponential in $L^{d-1}$, significantly improving the
%%PT -- below, it had a minus in the exponent, which I removed.
{$e^{cL/\log^2 L}$} lower bound of \cite{confversion}.
While the previous bound required that we consider only
contours which can be embedded into $\bbZ^d$, this optimal
lower bound requires that we deal explicitly with the topology
of the torus.  In particular, we must distinguish between
surfaces with vanishing and non-vanishing winding numbers,
which we call contours and interfaces respectively.  Moreover,
in order to deal optimally with the contours, we need to define
an appropriate notion of exterior and interior which allows us
to establish a partial order on the set of contours.  This in
turn is used to develop the appropriate Pirogov-Sinai theory on
the torus.

In order to state our results precisely, we  need a few
definitions. Let $G=(V,E)$ be a finite graph and let $\beta>0$.
For a positive integer $q$, let $[q]=\{1,2,\ldots ,q\}$. The
Gibbs measure of the (ferromagnetic) $q$-state Potts model on
$G$ at inverse temperature $\beta$ is a measure on $[q]^V$ with
density
\begin{equation}\label{Mudef}
\mu_G(\bss) = \frac{e^{-\beta H_G(\bss)}}{Z_G},
\end{equation}
where $\bss \in [q]^V$ is a spin configuration. Here
\begin{equation}\label{Hdef}
 H_G(\bss) = \sum_{xy \in E}\left(1-\delta(\sigma_x, \sigma_y)\right)
\end{equation}
is the Hamiltonian, and the normalization factor
\begin{equation}\label{Zdef}
Z_G = \sum_{\bss \in [q]^V} e^{- \beta H_G(\bss)}
\end{equation}
is the partition function. (In the above, $\delta$ denotes the
Kronecker delta function.)

For a finite $\LA \subset \bbZ^d$ we define the measure
$\mu_{\LA,1}$ with the ``1-boundary condition'' by setting all
the spins at the external boundary of $\LA$ to 1. Explicitly,
for $\bss \in \{1,2,\ldots, q\}^{\LA}$, let
\[H_{\LA,1}(\bss) = H_{G[\LA]}(\bss) +
\sum_{{x\in \LA} \atop{{y\in \LA^c} \atop{|x-y|=1}}}
(1-\delta(\sigma_x,1))\,,\] where $G[\LA]$ is the induced subgraph of
$\LA$ and $|\cdot|$ is the $l_1$ distance.

Then $\mu_{\LA,1}$ is the probability measure with density
\begin{equation}
\label{mu1bd}
\mu_{\LA,1}(\bss)
= \frac{e^{-\beta H_{\LA,1}(\bss)}}{Z_{\LA,1}},
\end{equation}
where $Z_{\LA,1}$ is the appropriate normalization factor.

{The infinite volume  magnetization is defined as
$M(\beta)=\lim_{L\to\infty} M_{\LA_L}(\beta)$ where
$\LA_L=\{1,\dots,L\}^d$ and
\[ M_\LA(\beta) =
\frac 1{|\LA|}\sum_{x\in\LA}
\Bigl(\mu_{\LA,1}(\{\sigma_x=1\})
- \frac{1}{q}\Bigr).
\]
 By standard correlation
inequalities, the limit $\lim_{L\to\infty} M_{\LA_L}(\beta)$}
is known to exist and to be monotone nondecreasing in $\beta$.
The transition point is defined by
\[\beta_0 = \beta_0(\bbZ^d)= \inf \{\beta : M(\beta) > 0\}.\]

{Let $d\geq 2$.  Then $0<\beta_0<\infty$, and the model
has a unique (infinite-volume) Gibbs state for $\beta <
\beta_0$, and  at least $q$ extremal translation-invariant
Gibbs states for $\beta > \beta_0$ \cite{ACCN}.  For $q$ small
enough (depending on $d$), the model is believed to have a
unique Gibbs state at $\beta_0$, whereas for $q$ large enough
(again depending on $d$), it is known \cite{KS,LMMRS} to have
 $q + 1$ extremal
translation-invariant Gibbs states at $\beta_0$, with
\begin{equation}
\label{beta0-asym}
\beta_0 =\frac 1d\log q+O(q^{-1/d}).
\end{equation}

Next} we define the mixing time of a finite Markov chain with
state space $\Omega$. Let $P$ denote the transition probability
matrix of an irreducible Markov chain with the (unique)
stationary measure $\mu$. The (variational) mixing time of such
a chain is defined as
\begin{equation}
\label{t-mix} \tau = \min\left\{t: d(t) \le \frac{1}{2e} \right\}\,,
\end{equation}
where
\[ d(t) = \max_{\bss \in \Omega}
\max_{A \subset \Omega} \Big|\mu(A) -
\sum_{\bss' \in A} P^t(\bss,\bss')\Big|. \]
In this paper, we will consider several Markov chains for the
Potts model on the torus
\[
T_{L,d} = \left(\bbZ/L \bbZ \right)^d.
\]
The chains we consider are the heat bath (or Glauber) dynamics,
and the presumably much faster SW algorithm; see
Section~\ref{Alg-sec} of the definition of these chains. We
denote the mixing times of these algorithms for the $q$-state
Potts model on the torus $T_{L,d}$  by $\tau^{\rm HB} =
\tau^{\rm HB} (T_{L,d})$ and $\tau^{\rm SW} = \tau^{\rm SW}
(T_{L,d})$.

\begin{theorem}
\label{ub-thm} There are universal constants $k_1,k_2<\infty$
such that, for $\beta>0$, $d\geq 2$ and any positive integer
$L$, the following bounds hold
\begin{align}
\tau^{\rm HB}(T_{L,d}) &\le e^{(k_1+k_2\beta) L^{d-1}}
\label{HB-tau-up}
\\
\tau^{\rm SW}(T_{L,d}) &\le e^{(k_1+k_2\beta) L^{d-1}}.
\label{SW-tau-up}
\end{align}

\end{theorem}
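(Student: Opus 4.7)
The plan is to prove both bounds by a recursive block-decomposition argument in which the torus $T_{L,d}$ is repeatedly split along coordinate hyperplanes into thinner slabs, with each level of the recursion paying only a boundary-surface-area cost in the exponent.

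For the heat bath bound \eqref{HB-tau-up}, I would apply the standard Cesi--Martinelli recursion for single-site dynamics. Write $T_{L,d}=A_1\cup A_2$ as a union of two overlapping slabs of width roughly $L/2$, and use the well-known variational inequality comparing the Dirichlet form of the single-site heat bath chain on $T_{L,d}$ to the sum of the Dirichlet forms on $A_1$ and $A_2$ separately. The comparison constant is bounded by a ratio of the form $\mu(\sigma)/\mu(\sigma')$ over configurations agreeing outside a constant-width neighborhood of the slab boundary; by the locality of \eqref{Hdef} this ratio is at most $e^{c\beta L^{d-1}}$ for some universal $c$. Iterating the split $O(\log L)$ times, until the base case is a slab of constant width (whose heat bath mixing time is trivially bounded by $q^{O(L^{d-1})}$), and summing the resulting geometric series in the exponent gives \eqref{HB-tau-up}.

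For \eqref{SW-tau-up} the obstacle is that a single SW update is non-local, so the direct Cesi--Martinelli comparison to block dynamics is unavailable. The plan is to introduce a graph parameter $\mathrm{dw}(G)$---the decomposition width---that measures the best way to recursively split $G$ by removing small vertex cuts, and to prove a general bound of the form $\tau^{\rm SW}(G)\le\mathrm{poly}(|V(G)|)\cdot e^{O(\beta)\,\mathrm{dw}(G)}$ for any finite graph $G$. For the torus, $\mathrm{dw}(T_{L,d})=O(L^{d-1})$ follows immediately by recursively halving along coordinate hyperplanes, yielding \eqref{SW-tau-up}. The general bound is obtained by comparing the full SW Dirichlet form on $G$ to that of a block-SW chain that at each step performs an SW update inside a single block chosen from the recursive decomposition; the block-SW chain itself then admits an inductive analysis along the decomposition tree, with the overall factor accumulating to $e^{O(\beta)\,\mathrm{dw}(G)}$ from the boundary costs at each level.

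The hardest step is the comparison between the full SW Dirichlet form and the block-SW Dirichlet form. Single-site heat bath updates on a block exactly implement single-site updates on the full graph once the boundary condition is fixed, and this is precisely what makes the classical recursion work; this clean decoupling fails for SW, because the random-cluster bonds may create clusters that straddle block boundaries, so the effect of a block-SW move cannot be disentangled from what happens outside the block. My plan to overcome this is to construct canonical flows from transitions of the block-SW chain to transitions of the full SW chain, matching bond configurations on the cut edges and bounding the congestion by showing that the probability of the ``right'' bond pattern on a cut edge is at least $e^{-O(\beta)}$, which contributes the desired factor $e^{O(\beta)\,\mathrm{dw}(G)}$ when accumulated over all cut edges across all levels of the decomposition.
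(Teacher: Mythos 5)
Your overall architecture (recursive decomposition of the graph, a ``width'' parameter measuring the accumulated cut sizes, a cost of $e^{O(\beta)}$ per cut edge, and a polynomial prefactor) is the same as the paper's, but the step you yourself identify as the hardest one has a genuine gap as sketched. You propose to compare the full SW chain with a block-SW chain that performs an SW update \emph{inside one block with the exterior spins frozen}, routing block-SW moves through full-SW moves by canonical flows and paying only $e^{-O(\beta)}$ per cut edge for the ``right'' bond pattern on the cut. The difficulty is not only that clusters may straddle the cut: a full SW step never freezes anything, since in step SW2 \emph{every} cluster, including every cluster outside the block, is recolored uniformly at random. To realize a transition that changes the block while leaving the exterior untouched, a single full-SW step must pay a factor $1/q$ for each exterior cluster, which in disordered configurations is exponentially small in $|V|$, not in the cut size; longer flow paths that temporarily scramble the exterior run into the corresponding congestion problem. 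So the per-cut-edge accounting does not close, and the inductive analysis of the frozen-boundary block-SW chain ``along the decomposition tree'' is exactly the unproved crux. The paper's resolution is different and much simpler: compare SW on $G$ not with a block dynamics but with SW on $G_0=(V,E\setminus E_{12})$, i.e.\ with the \emph{product} of independent SW chains on $G[V_1]$ and $G[V_2]$, in which both sides are updated simultaneously and nothing is frozen. Conditioning on all cut edges being deleted in step SW1 costs at most $e^{-\beta|E_{12}|}$ and gives the kernel domination $P_G\geq e^{-\beta|E_{12}|}P_{G_0}$ of Lemma~\ref{subgraph-lem}; the recursion is then just the product-chain identity \eqref{prod-tau}. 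Replacing ``block-SW with frozen exterior'' by ``SW on the graph with the cut edges removed'' is the missing idea. A second, smaller omission: SW is not lazy, so a Dirichlet-form (spectral gap) bound alone does not bound the mixing time; the paper works with the two-step chain $P^2$ and uses the bound \eqref{mix-gap-bd}, and your flow argument would need the same device or some control of the smallest eigenvalue.

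The heat-bath half also does not give the stated bound as written. If you split only into ever thinner slabs in one coordinate direction, the cut area at every level remains of order $L^{d-1}$, so the per-level costs do not form a geometric series and the exponent accumulates to order $\beta L^{d-1}\log L$; moreover stopping at a constant-width slab and invoking the trivial bound $q^{O(L^{d-1})}$ puts $\log q$ into the exponent, so the constants $k_1,k_2$ would not be universal in $q$. Both defects disappear if you cut cyclically in all $d$ coordinate directions, so that after each round all side lengths are halved and the cut areas shrink geometrically (this is exactly the computation in the proof of Corollary~\ref{sw_ub-cor}), and recurse all the way down to single vertices, whose heat-bath chain has relaxation time one. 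With those corrections the HB argument is the standard one the paper alludes to (recursive Dirichlet-form bounds or canonical paths), and it is indeed the easier half; the substantive content of the theorem is the SW bound, where your proposal currently lacks the key comparison.
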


 In order to prove this, in Section~\ref{UB-sec},  we introduce the
``partition width'' of a graph, a notion that  may be of
independent interest. As a corollary of our proof, we also
obtain that the mixing time of SW on a tree with $n$ vertices,
maximum degree $d_{\max}$, and depth $O(\log n)$ is bounded by
$n^{1+O(\beta d_{\max})}$, see
 Corollary~\ref{sw_ub-cor} for the precise statement.
This generalizes the  result of \cite{BKMP}, which gives
polynomial mixing for the HB algorithm on trees, to the SW
algorithm.

\begin{theorem}\label{thm:SW}
Let $d\ge 2$.  Then there exists a constant $k_3=k_3(d)>0$ such
that, for $q$ and $L$ sufficiently large, the following bounds
hold:
\begin{align}
\tau^{\rm HB}(T_{L,d})
&\ge e^{k_3\beta L^{d-1}} \qquad \mbox{ for all }
\beta \ge \beta_0(\bbZ^d)
\label{HB-tau-low}
\\
\tau^{\rm SW}(T_{L,d})
&\ge e^{k_3\beta L^{d-1}} \qquad \mbox{ for  }
\beta = \beta_0(\bbZ^d).
\label{SW-tau-low}
\end{align}
\end{theorem}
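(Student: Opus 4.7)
The plan is to prove both lower bounds via the conductance (Cheeger) method: in each case I exhibit a subset $A$ of the spin configuration space with $\mu(A)$ bounded away from $0$ and $1/2$ such that the bottleneck ratio
\[
\Phi(A)\;=\;\frac{1}{\mu(A)}\sum_{\bss\in A,\,\bss'\notin A}\mu(\bss)\,P(\bss,\bss')
\]
is at most $e^{-c\beta L^{d-1}}$; the standard bound $\tau\ge 1/(4\Phi(A))$ then yields the claimed mixing time. In both cases $A$ is defined via the contour/interface decomposition of a configuration on $T_{L,d}$: disagreement edges $xy$ with $\sigma_x\neq\sigma_y$ produce $(d{-}1)$-dimensional dual faces whose connected components split into \emph{contours} (bounding a topologically trivial region) and \emph{interfaces} (components with nonzero winding around some coordinate direction). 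Any interface has at least $L^{d-1}$ dual faces, since its projection to a transverse coordinate hyperplane is surjective.

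For the heat bath chain with $\beta\ge\beta_0$, let $A_{\rm HB}$ be the set of configurations whose decomposition is interface-free and whose dominant color is $1$. The Pirogov-Sinai theory on the torus (an extension of the Borgs--Koteck\'y--Miracle-Sole formalism) gives $\mu_{T_{L,d}}(A_{\rm HB})=(1+o(1))/q$ for $\beta>\beta_0$ and $(1+o(1))/(q+1)$ at $\beta=\beta_0$, hence a value in $[1/(2q+2),1/2]$. Since a single spin flip changes the contour/interface decomposition only locally, a configuration on the boundary of $A_{\rm HB}$ must already carry a dual surface that wraps the torus, of area $\ge L^{d-1}$. Combining the Peierls weight $e^{-c\beta n}$ for a surface of area $n$ with the Bollob\'as--Leader isoperimetric counting of dual surfaces on $T_{L,d}$,
\[
\mu(\partial A_{\rm HB})\;\le\;\sum_{n\ge L^{d-1}}e^{-(c\beta-k')n}\;\le\;e^{-c'\beta L^{d-1}},
\]
and since each heat bath transition carries a $1/|V|$ factor, $\Phi(A_{\rm HB})\le e^{-c''\beta L^{d-1}}$.

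For Swendsen--Wang at $\beta=\beta_0$ the moves are highly non-local, so I pass to the joint spin/random-cluster (FK) representation. Let $A_{\rm SW}$ be the set of configurations whose associated FK configuration contains no cluster wrapping the torus; the refined Pirogov-Sinai analysis at $\beta_0$ for large $q$ gives $\mu(A_{\rm SW})=(1+o(1))/(q+1)$, corresponding to the disordered phase. Starting from $\bss\in A_{\rm SW}$ the FK bond update opens monochromatic edges independently with probability $p=1-e^{-\beta}$; because the monochromatic regions of $\bss$ are small, a wrapping cluster appears only with probability $e^{-c\beta L^{d-1}}$ by a dual-cutset estimate using the subcriticality of monochromatic FK percolation in the disordered phase. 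Conversely, from an ordered $\bss$ the wrapping cluster survives the bond update with overwhelming probability, and the uniform recoloring of clusters only permutes the $q$ ordered phases. Thus transitions between $A_{\rm SW}$ and its complement are exponentially rare in $L^{d-1}$, giving $\Phi(A_{\rm SW})\le e^{-c\beta L^{d-1}}$.

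The main obstacle is the Pirogov-Sinai input itself, not the conductance bookkeeping: classical Peierls/Pirogov-Sinai estimates handle contours by exploiting a canonical interior/exterior, but on $T_{L,d}$ a large contour separates the torus into two components neither of which is canonically the interior, and an interface has no bounded complement. I would follow the paper's outline by defining interior and exterior for each contour (for instance by choosing the component of smaller volume with ties broken in a fixed coordinate direction), verifying that this yields a partial order compatible with cluster-expansion resummation, and then extending the BKM formalism to obtain phase weights $(1+o(1))/q$ and $(1+o(1))/(q+1)$ together with the surface-order corrections needed to estimate both $\mu(A)$ and $\mu(\partial A)$. Once that torus Pirogov-Sinai input is in hand, the large-deviation technique of the cited work of Borgs and the Bollob\'as--Leader isoperimetry reduce the remaining steps to routine enumeration.
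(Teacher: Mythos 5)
Your overall strategy (conductance on the torus, contour/interface decomposition, Pirogov--Sinai input) matches the paper, but both of your test sets are chosen in a way that breaks the argument, and in the Swendsen--Wang case provably so. For SW you take $A_{\rm SW}=\{\bss:\ (V,E(\bss))\text{ has no wrapping cluster}\}$, a \emph{topological} observable. One SW step can change this observable at only price $q^{-L}$: whatever $\bss$ is, after the bond-deletion step the vertices of a fixed fundamental cycle $\ell_1$ of length $L$ lie in at most $L$ clusters, and with probability at least $q^{-L}$ the recoloring step assigns them all color $1$, producing a wrapping monochromatic cluster in $\bss'$. Hence $P^{\rm SW}(\bss,A_{\rm SW}^c)\ge q^{-L}$ for \emph{every} $\bss$, so $\Phi_{A_{\rm SW}}\ge q^{-L}=e^{-d\beta_0 L(1+o(1))}$, which is far larger than $e^{-c\beta L^{d-1}}$ for every $d\ge 3$; your claimed conductance bound for this set cannot hold, and no ``dual-cutset/subcriticality'' estimate can rescue it. This is why the paper uses the \emph{bulk} observable $S=\{\bss: |E(\bss)|\ge(1-\alpha)dL^d\}$: to leave $S$ in one SW step one must either delete $\Theta(L^d)$ edges (probability $e^{-cL^d}$, cf.\ \eqref{simplebound}) or land in the intermediate set $S_0=\{\alpha dL^d<|E(\bss)|<(1-\alpha)dL^d\}$, and the flow into $S_0$ is controlled by reversibility, $Q(S,S_0)=Q(S_0,S)\le\mu(S_0)\le e^{-c\beta L^{d-1}}$ via \eqref{SW-input1}; no bound on the one-step transition mechanism into $S_0$ is needed.

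The heat-bath set has a related defect, and your contour framework has a representation problem. You define contours/interfaces from disagreement edges of the \emph{spin} configuration; that low-temperature expansion cannot describe the disordered phase, which at $\beta=\beta_0$ (and in a window above it) carries weight $\approx 1/(q+1)$ and consists of configurations whose disagreement surface is space-filling, not a dilute gas of contours. Consequently the Pirogov--Sinai input you invoke (phase weights $q/(q+1)$ and $1/(q+1)$) is not available in your representation; the paper works in the FK representation, where the ordered phase is a perturbation of $A=E$ and the disordered phase of $A=\emptyset$, treated symmetrically (Section~\ref{sec:Cont-Rep}, Lemmas~\ref{lem:Con-sum} and \ref{mainlem}). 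Moreover, ``interface-free with dominant color $1$'' includes (or ambiguously handles) disordered configurations whose plurality color is $1$, and inside the disordered phase the plurality color changes under single spin flips at only polynomial cost, so $\Phi_{A_{\rm HB}}$ is not exponentially small in $L^{d-1}$ at or near $\beta_0$; your boundary analysis only addresses the interface-creation exit route. The paper avoids this by taking $S=\widehat\Omega_1^{(\alpha)}$, the configurations possessing a single color-$1$ monochromatic component of size $\ge(1-\alpha)L^d$: disordered configurations are excluded outright, one spin flip can split a component into at most $2d+1$ pieces so no direct transition to $\widehat\Om^{(\alpha)}_\dis$ or to $\widehat\Omega_k^{(\alpha)}$, $k\neq 1$, is possible, and all exiting flow passes through the intermediate-largest-component set whose measure is $e^{-c\beta L^{d-1}}$ by \eqref{HB-input1}. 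Fixing your proof would therefore require replacing both test sets by bulk-type events and redoing the equilibrium estimates in the FK contour model, which is essentially the content of Sections~\ref{sec:Cont-Rep}--\ref{sec:LowerBound}.
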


Very roughly speaking, the reason for the heat bath lower bound
is that this algorithm cannot move quickly among (the finite
analogs of) the $q$ translation-invariant extremal states
present for $\beta
> \beta_0$. On the other hand, the Swendsen-Wang algorithm can move
quickly among these $q$ states, but cannot move quickly between
these $q$ states and the one additional  translation-invariant
extremal state present at $\beta = \beta_0$. Whenever the
algorithm cannot move quickly between  states, in order for the
system to mix, it must pass through a configuration with a
``separating surface'' of size at least $L^{d-1}$ between the
relevant states.

%%PT -- Should we comment here on what statistical physicists believe
%%about the behavior of  $\tau_{rm SW}$ for $\beta > \beta_0(T_{L,d})$??
%%The referee wondered ...

 The organization of this paper is as follows.  In
Section~\ref{sec:prelim}, we  define the algorithms and the
necessary notions from the theory of MCMC.  In
Section~\ref{UB-sec}, we introduce the notion of partition
width and establish the upper bound on the mixing time.  In
order to obtain the corresponding lower bound, we need some
preparation: in Section~\ref{sec:Cont-Rep} we  construct the
contour representation of the model, while
Section~\ref{sec:geom-cont} establishes the required geometric
properties of contours and interfaces. This section can perhaps
be skipped on first reading. In the next section, we state the
necessary bounds from Pirogov-Sinai (to be proved in the
appendix), and use these to establish two key estimates needed
for the main proof: a bound on the probability of interfaces,
and a suitable large deviation bound.  Using these bounds, we
then prove our main result in Section \ref{sec:LowerBound}.

The reader only interested in the main flavor of our proofs
should perhaps start with Section~\ref{sec:proof-strat}, where
we explain the main proof strategy, and then immediately jump
to Section~\ref{sec:LowerBound}, glancing back at
Section~\ref{sec:Cont-Rep} and Section~\ref{decomp} as
necessary.

\section{MCMC Preliminaries}
\label{sec:prelim}

\subsection{Algorithms for the Potts Model}

\label{Alg-sec}

There are several MCMC algorithms that are used to generate a
random sample from the distribution corresponding to the
ferromagnetic Potts model. The heat bath is perhaps the
simplest such Markov chain.  Its transitions are as follows:
Choose a vertex at random, and modify the spin of that vertex
by choosing from the distribution conditional on the spins of
the other vertices remaining the same. In contrast, the
Swendsen-Wang algorithm can alter the spins on many vertices in
each iteration.

Throughout this section $G=(V,E)$ is a fixed finite graph. For
a subgraph $\tilde{G}=(\tilde{V},\tilde{E})$ of $G$, we denote
the set of (connected) components of $\tilde{G}$ by
$\cC(\tilde{G}) = \cC(\tilde{V},\tilde{E})$, and  its
cardinality by $c(\tilde{G}) = c(\tilde{V},\tilde{E})$.
Finally, for a spin configuration $\bss \in [q]^V$, let
$E(\bss)$ be the set of ``monochromatic edges'' $xy\in E$ with
$\sigma_x=\sigma_y$.

\

\noindent {\bf Heat Bath:} From a spin configuration $\bss \in
[q]^V$, we construct a new configuration $\bss'$ as follows:
\begin{description}
\item[HB1] Choose $v$ uniformly at random from $V$.
\item[HB2] Take $\sigma'_w = \sigma_w$, for all $w \in
    V\setminus \{v\}$, and change $\sigma_v$ to $\sigma'_v$
    with probability
$$
{\mu_G\left(\sigma'_v \big| \bss_{V\setminus \{v\}}\right)} =
\frac{\displaystyle \exp\Bigl\{\beta \sum_{w\in V \atop{vw \in E}}
\delta(\sigma'_v,\sigma_w) \Bigr\}} {\displaystyle \sum_{k=1}^q
\exp \Bigl\{\beta \sum_{w\in V \atop{vw\in E}} \delta(k,\sigma_w)
\Bigr\}}.
$$
For future reference we denote the transition matrix of
this chain by
\[ P_G^{{\rm HB}}(\bss,\bss')
= \frac{1}{|V|} \sum_{v\in V}\Bigl(
\mu_G\left(\sigma'_v \big| \bss_{V\setminus \{v\}}\right)
\prod_{w\neq v} \delta(\sigma'_w,\sigma_w)\Bigr).\]

\end{description}

In practice, an alternative method, the Swendsen-Wang algorithm
\cite{SW}, is often preferred.

\

\noindent {\bf Swendsen-Wang Algorithm:} For $\bss \in [q]^V$:

\begin{description}
\item[SW1] Let $E(\bss) \subset E$  be the set  of
    monochromatic edges. Delete each edge of $E(\bss)$
    independently with probability $1-p$, where $p=
    1-e^{-\beta}$. This gives a random subset $A \subset
    E(\bss)$.
\item[SW2] The graph $(V,A)$ consists of connected
    components. For each component, choose a color (spin)
    $k$ uniformly at random from $[q]$, and for all
    vertices $v$ within that component, set $\sigma'_v=k$.

\end{description}

Again for future reference, we denote the transition matrix of
this chain by
\[P_G^{{\rm SW}}(\bss,\bss')
= \sum_{A\subset E(\bss)} p^{|A|} (1-p)^{|E(\bss)\setminus A|}
\prod_{C\in \cC(V,A)} \Bigl(\frac{1}{q} \sum_{k=1}^q \prod_{v\in
V(C)} \delta(\sigma'_v, k)\Bigr).\]

The Swendsen-Wang algorithm was motivated by the equivalence of
the ferromagnetic $q$-state Potts model and the {\em random
cluster} model of Fortuin and Kasteleyn \cite{FK}, which we now
describe. Fortuin and Kasteleyn realized that the Potts model
partition function $Z_G$ and expectations with respect to the
measure $\mu_G$ can be rewritten in terms of a weighted graph
model on spanning subgraphs $(V,A)\subset G$ with weights
\begin{equation}
\label{FKmes} \nu_G(A) = \frac 1{Z_G} p^{|A|}(1-p)^{|E \setminus A|}
q^{c(V,A)}.
\end{equation}

The relationship between the two models is elucidated in a
paper by Edwards and Sokal \cite{ES}. The Potts and random
cluster models are defined on a joint probability space
$[q]^V\times 2^E$. The joint probability $\pi({\bss},A)$ is
defined by
\begin{equation}
\label{ESmes} \pi_G({\bss},A)= {1\over Z_G} p^{|A|}(1-p)^{|E
\setminus A|}\prod_{xy\in A} \delta(\sigma_x,\sigma_y).
\end{equation}
By summing over ${\bss}$ or $A$ we see that the marginal
distributions are $\nu_G$ or $\mu_G$ respectively.

A step ${\bss} \to {\bss}'$ of the Swendsen-Wang algorithm can
be seen as (i) choose a random $A'$ according to
$\pi_G({\bss},\cdot)/\mu_G(\bss)$ and then (ii) choose a random
${\bss}'$ according to $\pi(\cdot,A')/\nu_G(A')$. After Step
SW1,  we say that we are in the random cluster representation
of the chain.

\subsection{Mixing Time and Related Quantities}
\label{mixtime-sec} Throughout this section, let $P$ be the
transition matrix of an irreducible Markov chain on a finite
state space $\Omega$,  let $\mu$ be the stationary distribution
of $P$,
 i.e., $\mu(\bss') = \sum_{\bss \in \Om} \mu(\bss)
P(\bss, \bss')$, and let
$\mu_{\min}=\min_{\bss\in\Om}\mu(\bss)$.
  We denote the mixing time, defined in
(\ref{t-mix}), by $\tau(P)$, and define the inverse gap (or
eigentime) $\ttau(P)$ as

\begin{equation}
\label{gap-def}
\ttau(P) = \sup_g \frac{\var \ g}{\E(g,g)},
\end{equation}
where the supremum is over all real valued functions $g$ on
$\Omega$ with $\var \ g > 0$. Here as usual,
\[\var \ g = \var_{\mu} \ g = \frac{1}{2} \sum_{\bss,\bss'}
\left(g(\bss) - g(\bss') \right)^2 \mu(\bss)\mu(\bss'),
\]
and
\[\E(g,g) = \E_P(g,g) = \frac{1}{2} \sum_{\bss,\bss'}
\left(g(\bss) - g(\bss') \right)^2 \mu(\bss) P(\bss,\bss').\]
If $P$ is reversible, $\tilde\tau(P)$ is just
$(1-\beta_2(P))^{-1}$, where $\beta_2(P)$ is the second largest
eigenvalue of $P$.

It is well known that the inverse gap can be bounded above in
terms of the mixing times; if the chain is lazy, i.e., if the
minimal self-loop probability $\min_\bss P(\bss,\bss)$ is
uniformly bounded from below, a bound in the opposite direction
is also not very hard to prove, see, e.g., \cite{AF}.  However,
the SW chain is not lazy.  Instead of the standard upper bound
on $\tau(P)$ in terms of $\tilde\tau(P)$, we therefore use a
bound from \cite{MT06}. For reversible chains, this bound gives
\begin{equation}
\label{mix-gap-bd}
\tau(P) \le \ttau(P^2) \log\Bigl(\frac{e^2}{\mu_{\rm min}}\Bigr),
\end{equation}
where, as usual, $P^2(\bss,\bss')=\sum_{\bss''\in
\Om}P(\bss,\bss'')P(\bss'',\bss')$, denotes the transition
matrix of the two-step chain.

 We will also need an identity for the mixing time of a product chain.
Let $\Om_1, \Om_2$ be finite sets, and let $P_1, P_2$ be the
transition matrices of two irreducible Markov chains on $\Om_1$
and $\Om_2$ with stationary distributions $\mu_1$ and $\mu_2$
respectively.  Let $\Om_1\times\Om_2$ be the set of all pairs
$\bss=(\bss^{(1)},\bss^{(2)})$ with $\bss^{(1)}\in\Om_1$ and
$\bss^{(2)}\in\Om_2$. Then the product chain is defined as the
chain with the transition matrix
\begin{equation}
\label{prod-chain} (P_1\times P_2)\bigl(\bss,\tilde\bss\bigr)
 =
P_1(\bss^{(1)},\tilde\bss^{(1)})
P_2(\bss^{(2)},\tilde\bss^{(2)}).
\end{equation}
Let
\[ (\mu_1 \times \mu_2)(\bss)
= \mu_1(\bss^{(1)}) \mu_2(\bss^{(2)}).
 \]

If both $P_1$ and $P_2$ have non-negative eigenvalues, then
$P_1\times P_2$ has non-negative eigenvalues and
$\beta_2(P_1\times P_2)=\max\{\beta_2(P_1),\beta_2(P_2)\}$.
Using this fact, one immediately shows that $P_1\times P_2$ is
irreducible with stationary distribution $\mu_1\times \mu_2$
and obeys the bound
\begin{equation}
\label{prod-tau}
\tilde\tau(P_1 \times P_2) =
\max\bigl\{ \tilde\tau(P_1), \tilde\tau(P_2) \bigr\}.
\end{equation}

For our lower bounds, we use the notion of conductance and its
relation to the mixing time. Setting
\[Q(S,S') = \sum_{\bss \in S} \sum_{\bss' \in S'}
\mu(\bss) P(\bss, \bss'), \]
the conductance of a set $S\subset \Om$ can be defined as
\begin{equation}
\label{cond} \Phi_S =  \frac{Q(S,S^c)}{\mu(S)\mu(S^c)}.
\end{equation}
Finally the conductance of a Markov chain with the transition
matrix $P$ is
\begin{equation}
\label{conductance} \Phi(P) = \min_{S: 0<\mu(S)<1} \Phi_S.
\end{equation}

The mixing time can be easily bounded from below in terms of
the conductance, see \cite{B},\cite{CLP}, \cite{DFJ}:
\begin{equation}
\label{mix-cond-lb}
\t(P)\geq \frac{e-1}{e} \frac{1}{\Phi(P)}.
\end{equation}

\subsection{Proof Strategy}
\label{sec:proof-strat}

 In order to prove Theorem \ref{ub-thm}, we will want to
give an upper bound on the inverse gap $\ttau$ defined in
\eqref{gap-def}. To this end, it will be convenient to consider
the SW algorithm on a general graph $G$.  We then iteratively
partition the set $V$ of vertices of $G$ into two sets $V_1$
and $V_2$, and show that the inverse gap of $(P_G^{{\rm
SW}})^2$ is bounded by the inverse gap of the product chain for
SW on the two induced graphs $G[V_1]$ and $G[V_2]$, times a
factor which is exponential in the number of edges with one
endpoint in $V_1$ and one endpoint in $V_2$. With the help of
\eqref{prod-tau} and \eqref{mix-gap-bd}, this allows us to
bound the mixing time of SW by a number which is exponential in
a quantity we call the partition width of the graph $G$.
Applied to the torus $T_{L,d}$, this gives a bound which is
exponential in $L^{d-1}$, and applied to a tree, this will give
a bound which is polynomial in the number of vertices.  The
bound for the HB algorithm follows a similar strategy.

To prove Theorem~\ref{thm:SW}, we will use the lower bound
\eqref{mix-cond-lb}. To this end, we will want to find a set of
spin configurations $S$ such that $\Phi_S$ is exponentially
small in $L^{d-1}$. Recalling that the Potts model at the
transition point exhibits the coexistence of $q$ ordered phases
and one translation invariant phase, we will want to exploit
the fact that the SW algorithm cannot transition easily between
(the finite volume analogue of) the ordered phases and the
disordered phase.  To make this precise, we  define $S$ to be
the set $S=\{\bss : |E(\bss)| \ge (1-\alpha)dL^d\}$, where
$\alpha>0$ is a small constant, say $\alpha=1/3$. Thus $S$
consists of the configurations whose set of monochromatic edges
form  almost all of $E$.

For large $q$ the inverse transition temperature $\beta_0$ is
large as well, implying that at $\beta=\beta_0$, the
probability of deleting an edge in the first step of the SW
algorithms is small;  starting from a configuration $\bss\in S$
it is therefore unlikely that after one step of the Markov
chain, the new configuration $\bss'$ is such that the number of
edges in $E(\bss')$ is much smaller than $(1-\alpha)dL^d$. (The
probability that it is smaller than, say $\alpha d L^d$, is
actually exponentially small in $L^d$).  But it is also
unlikely that a configuration $\bss'\in S^c$ has  a number of
monochromatic edges which is larger than
 $\alpha dL^d$, since both requirements together imply that the number of edges
lies between $\alpha dL^d$ and  $(1-\alpha)dL^d$.  This
corresponds neither to an ordered phase (which would have more
than $(1-\alpha)dL^d$ monochromatic edges), nor to a disordered
phase (which would have less than $\alpha dL^d$ monochromatic
edges), and thus to a configuration which has low probability.
Thus with high probability a configuration
 $\bss\in S$ leads to a new configuration $\bss'$ which is in $S$ as well, showing
that $\phi_S$ is small.

To make this quantitative, we will have to show that the weight
of the configurations in $S_0 = \{\bss' : \alpha dL^d \le
|E(\bss')| \le (1-\alpha)L^d\}$ is exponentially small in
$L^{d-1}$.  To this end, we will first switch to the FK
representation \eqref{FKmes}, and then describe an edge
configuration $A\subset E$ in terms of a hypersurface
separating regions with  edges in $A$ from regions with edges
in $E\setminus A$.  We then decompose this hypersurface into
connected components, some of which to be called contours, and
others to be called interfaces.  While contours can have small
or large size, interfaces will always have size at least
$L^{d-1}$.

To prove our desired bounds on $\mu(S_0)$, we will show that
the probability of a configuration with an interface is
exponentially small in the size of the interface, leaving us
with the analysis of configurations without interfaces.  These
will come in two classes: configurations describing
perturbations of $A=E$ (we denote the set of these
configurations by $\Omega_\ord$), and configurations describing
perturbations of $A=\emptyset$ (to be denoted by
$\Omega_\dis$).  Our last step then consists of a large
deviations bound showing that with high probability, for
configurations $A\in \Omega_\ord$, the common exterior of a set
of contours has size at least $(1-\frac 12\alpha)L^d$, and
similarly for configurations $A\in\Omega_\dis$.  This will
imply that with high probability, the set of monochromatic
edges of an ordered configuration $\bss$ has at least
$(1-\alpha)d L^d$ edges, while that of a disordered
configuration has at most $\alpha d L^d$ edges. Put together,
these estimates give the desired bound for $\mu(S_0)$.

Our approach differs in several aspects from the approach taken
in \cite{confversion}, which led to a conductance bound that
was exponentially small in $L/(\log^2L)$.

First, the bounds in \cite{confversion} relied on a combination
of Pirogov-Sinai theory with the finite-size scaling theory
developed in \cite{BK} and \cite{BKM}, where contours were by
definition objects that could be embedded in $\bbR^d$.  This
allowed for an immediate application of standard Pirogov-Sinai
results, but  produced error terms that were only exponential
in $L$, which is not strong enough for our current purpose.  To
avoid this problem, we define contours in a purely topological
manner,  by requiring that their $\bbZ_2$ winding number with
respect to the torus is equal to zero.  This has the advantage
that the objects which cannot be classified as contours (we
call them interfaces) must have size at least $L^{d-1}$, since
all surfaces of smaller size have winding number zero. The
price we have to pay is that the set of contours now contains
objects like long tubes winding around the torus which cannot
be embedded into $\bbR^d$,  preventing us from applying the
standard Pirogov-Sinai machinery as more or less a black box.

Instead, we will show that Pirogov-Sinai theory does not really
rely on the topology of $\bbR^d$, but rather on the implied
structure of partial order  on contours.  More precisely, it
relies on the fact that for any set of pairwise non-overlapping
contours and interfaces, this partial order leads to a Hasse
diagram that is a forest -- this is expressed in
Lemma~\ref{lem:partial-order}, see also
Lemma~\ref{lem:int-ext-exist} and
Definition~\ref{def:Ext-gamma}.

Second, we will use a large deviation bound obtained by adding
an artificial magnetic field to the contour model, see
Section~\ref{sec:LargeDev} for details.  This turns out to be
much more efficient than the iterative, combinatorial arguments
from \cite{confversion}, allowing us to improve a bound that is
exponentially small in $L/(\log L)^2$ to one which is
exponentially small in $L^{d-1}$.

\section{Upper Bound on Mixing Time}
\label{UB-sec}

In order to prove our upper bound on the mixing time, it will
be convenient to  prove a more general theorem, involving a new
notion, called the ``partition width'' of a graph, which we
expect may be of independent interest. We need some notation.

Given a graph $G=(V,E)$, we define a hierarchical partition
$\cal P$ of $V$ by first dividing $V$ into two non-empty
subsets $V_1$, $V_2$, and then successively subdividing each
set with more than one element into two further subsets until
all sets contain only one element.  Each such partition can be
described by a rooted binary tree as follows: the vertices are
subsets of $V$, with the root being $V$, the leaves being the
singletons $\{x\}$, $x\in V$. In addition, we have the
constraint that for any vertex $V_i$ with children $V_{i1}$,
$V_{i2}$, we have $V_i=V_{i1}\cup V_{i2}$ and $V_{i1}\cap
V_{i2}=\emptyset$.  If $V_i$ has children $V_{i1}$ and
$V_{i2}$, we define the weight $w_{\cP}(V_i)$ of $V_i$ as the
number of edges between $V_{i1}$ and $V_{i2}$ in $G$; if $V_i$
is a leaf, we set its weight to zero.

For $x\in V$, we now define the separation cost $\sep_\cP(x)$
of $x$ as the sum of all vertex weights along the path from the
root to $x$.  The cost of a partition $\cP$ is then defined as
$\sep(\cP)=\max_{x\in V}\sep_\cP(x)$, and the
 {\em partition width}
of $G$ is defined as $\PW(G)=\min_\cP\sep(\cP)$.

\begin{theorem}
\label{sw_ub-thm} For any finite graph $G=(V,E)$, the mixing
time of the SW algorithm obeys the upper bound
\[\tau(P^{\rm SW}_{G}) \le e^{5\beta \PW(G)}
\bigl(2+|V|\log 2+\beta|E|\bigr).
\]
\end{theorem}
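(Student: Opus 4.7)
My plan is to establish a single-level Dirichlet-form comparison and iterate it down the hierarchical partition achieving $\PW(G)$. Writing $P_Q:=P^{\rm SW}_{G[V_1]}\times P^{\rm SW}_{G[V_2]}$ and $\mu_Q:=\mu_{G[V_1]}\times\mu_{G[V_2]}$, the main technical step I aim for is that for any partition $V=V_1\sqcup V_2$ with $w$ crossing edges,
\begin{equation*}
\tilde\tau(P^{\rm SW}_G)\ \le\ e^{c\beta w}\,\tilde\tau(P_Q)
\end{equation*}
for an absolute constant $c$. Combined with the product identity \eqref{prod-tau} and downward induction on the binary tree describing a partition $\cP$, this gives $\tilde\tau(P^{\rm SW}_G)\le\exp\bigl(c\beta\max_{x\in V}\sep_{\cP}(x)\bigr)$, since the leaves correspond to single-vertex chains with trivial inverse gap; minimizing over $\cP$ then yields $\tilde\tau(P^{\rm SW}_G)\le e^{c\beta\PW(G)}$. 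The Edwards--Sokal representation writes $P^{\rm SW}_G$ as a product of two self-adjoint positive operators on the joint Potts--FK space, so $P^{\rm SW}_G$ has non-negative spectrum and hence $\tilde\tau((P^{\rm SW}_G)^2)\le\tilde\tau(P^{\rm SW}_G)$; inserting this into \eqref{mix-gap-bd} together with the crude bound $\mu_{\min}\ge q^{-|V|}e^{-\beta|E|}$ produces the stated mixing time estimate.

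For the comparison lemma I exploit the FK realization of SW. The key observation is that a single step of $P^{\rm SW}_G$ starting from $\bss$, conditioned on the event that the percolation step of SW1 deletes every monochromatic crossing edge, is distributionally identical to a single step of $P_Q$: without any surviving crossing edges every FK component lies entirely inside $V_1$ or entirely inside $V_2$, so the uniform component-coloring step SW2 factorizes over the two parts. Since at most $w$ monochromatic crossing edges are ever present and each is deleted independently with probability $1-p=e^{-\beta}$, this conditioning event has probability at least $e^{-\beta w}$, yielding the pointwise bound
\begin{equation*}
P^{\rm SW}_G(\bss,\tilde\bss)\ \ge\ e^{-\beta w}\,P_Q(\bss,\tilde\bss).
\end{equation*}
Moreover $\mu_G(\bss)=(Z_{G[V_1]}Z_{G[V_2]}/Z_G)\,e^{-\beta H_{\mathrm{cross}}(\bss)}\mu_Q(\bss)$ with $0\le H_{\mathrm{cross}}(\bss)\le w$, so the Radon--Nikodym derivative $\mu_G/\mu_Q$ is bounded above and below by numbers whose ratio is at most $e^{\beta w}$. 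Plugging both estimates into the variational formula \eqref{gap-def} yields the lemma: two factors of $e^{\beta w}$ appear in the lower bound on the Dirichlet form of $P^{\rm SW}_G$ in terms of that of $P_Q$ (one from the pointwise transition bound, one from the lower bound on $\mu_G/\mu_Q$), and a further factor arises from the standard change-of-measure inequality $\mathrm{Var}_{\mu_G}(f)\le\|\mu_G/\mu_Q\|_\infty\,\mathrm{Var}_{\mu_Q}(f)$; the normalization $Z_{G[V_1]}Z_{G[V_2]}/Z_G$ cancels between numerator and denominator.

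The main obstacle I anticipate is not the overall structure but the bookkeeping needed to pin down the final constant cleanly. Each individual estimate---pointwise transition bound, stationary-measure ratio, Dirichlet form and variance change-of-measure---is elementary, but composing them at a single level and then compounding along the worst root-to-leaf path in $\cP$ demands careful tracking of several factors of $e^{\beta w}$ simultaneously. Once that single-level comparison is in hand the rest of the proof is routine: the downward recursion via \eqref{prod-tau}, the reduction $\tilde\tau((P^{\rm SW}_G)^2)\le\tilde\tau(P^{\rm SW}_G)$ from non-negativity of the SW spectrum, and the crude lower bound on $\mu_{\min}$ inserted into \eqref{mix-gap-bd}.
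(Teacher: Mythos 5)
Your proposal is correct and follows essentially the paper's own argument: the same single-level comparison (force deletion of the crossing edges, costing $e^{-\beta w}$ in the transition kernel, plus a change of measure costing further factors of $e^{\beta w}$), the same recursion along a hierarchical partition via \eqref{prod-tau} down to single-site chains with inverse gap one, and the same final application of \eqref{mix-gap-bd} with a crude lower bound on $\mu_{\min}$. The only real difference is technical housekeeping: the paper states its comparison (Lemma~\ref{subgraph-lem}) for the two-step chains $P^2$, so that non-negativity of the spectrum is automatic, whereas you work with the one-step chains and supply the (correct and standard) observation that $P^{\rm SW}_G=KK^*$ through the Edwards--Sokal space is positive semidefinite -- a fact you indeed need both for \eqref{prod-tau} and for $\ttau\bigl((P^{\rm SW}_G)^2\bigr)\le\ttau(P^{\rm SW}_G)$.
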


Before proving the theorem, we state (and prove) the following
corollary, which illustrates the usefulness of our notion of
partition width.

\begin{corollary}
\label{sw_ub-cor} Let $\LA$ be a rectilinear subset of
$\bbZ^d$, let $T_{L,d}$ be the $d$-dimensional torus of side
length $L$, and let $T$ be a tree on $n$ vertices with maximum
degree $d_{\max}$ and depth $c\log n$.  Then
\begin{align}
\tau(P^{\rm SW}_{\LA})
&\le e^{45 \beta |A(\LA)|}
\bigl(2+(\log 2+d\beta)|\La|\bigr),
\label{PW-box-bound}
\\
\tau(P^{\rm SW}_{T_{L,d}})
&\le e^{75\beta L^{d-1}}
\bigl(2+(\log 2+d\beta)L^d\bigr),
\\
\tau(P^{\rm SW}_{T})
&\le n^{5 c\beta d_{\max}}\bigl(2+(\log 2+\beta)n\bigr).
\end{align}
Here $A(\La)$ is the volume of $\La$ divided by the minimal
side length.
\end{corollary}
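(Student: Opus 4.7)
The plan is to apply Theorem~\ref{sw_ub-thm}, which reduces each bound to an estimate on $\PW(G)$; the polynomial prefactors in the corollary come from the theorem's $2+|V|\log 2+\beta|E|$ after substituting the standard values of $|V|$ and $|E|$ for each graph.

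For the rectilinear box $\Lambda$ with sides $L_1\le\ldots\le L_d$, I would take the partition $\cP$ obtained by always bisecting the currently longest side. Along any root-to-leaf path, at step $t$ the current sub-box has volume $V(t)=V(0)/2^t$ and sorted sides $s_1(t)\le\ldots\le s_d(t)$, so the cut costs $V(t)/s_d(t)$. Since the maximum is at least the geometric mean, $s_d(t)\ge V(t)^{1/d}$, which gives a cost per cut of at most $V(t)^{(d-1)/d}$; summing the resulting geometric series yields
\[
\sep_\cP(x) \le \sum_{t\ge 0} V(0)^{(d-1)/d}\, 2^{-t(d-1)/d} \le \frac{V(0)^{(d-1)/d}}{1-2^{-1/2}} \le 4\, V(0)^{(d-1)/d}.
\]
As $L_1\le L_i$ for every $i\ge 2$, one has $L_1^{d-1}\le L_2\cdots L_d$, so $V(0)^{(d-1)/d}=(L_1\cdots L_d)^{(d-1)/d}\le L_2\cdots L_d=A(\Lambda)$, whence $\PW(\Lambda)\le 4A(\Lambda)\le 9A(\Lambda)$. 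The torus $T_{L,d}$ is handled by the same strategy, with the single modification that a bisection of a direction that is still periodic in the current sub-box requires two parallel separating hyperplanes and thus doubles that cut's cost; once each of the $d$ directions has been cut once, the sub-box is non-periodic of side $L/2$, and summing the doubled first-$d$ cuts with the residual box estimate gives $\PW(T_{L,d})\le 6L^{d-1}\le 15L^{d-1}$.

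For the tree $T$ of depth $c\log n$, the partition is built by peeling off one vertex at a time, starting at the root. At the current root $r$, I realise the $(\deg(r)+1)$-way split into $\{r\}$ and the subtrees rooted at $r$'s neighbours via a balanced binary cascade of depth $\lceil\log_2(\deg(r)+1)\rceil$. The key observation is that each edge incident to $r$ is cut at exactly one step along any root-to-leaf path through this cascade, so its cumulative contribution to $\sep_\cP$ is at most $\deg(r)\le d_{\max}$; each leaf of the cascade is either $\{r\}$ or a subtree at one of $r$'s neighbours, which has depth at most $\mathrm{depth}(T)-1$, so induction on depth yields $\PW(T)\le (c\log n)\,d_{\max}$. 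Combining each of the three partition-width bounds with Theorem~\ref{sw_ub-thm} then gives the three estimates of the corollary. The main technical point is the single maximum-vs-geometric-mean inequality that drives the box (and, with doubling, torus) estimates; the tree case is essentially immediate once one has the cascade construction at each vertex.
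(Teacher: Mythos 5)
Your proposal is correct and essentially reproduces the paper's own argument: the identical hierarchical partitions (repeated bisection of the longest side, two parallel cuts per still-periodic direction of the torus, peeling the root together with its at most $d_{\max}$ incident edges for the tree) are fed into Theorem~\ref{sw_ub-thm}, the only difference being bookkeeping, namely your max-versus-geometric-mean bound $V(t)/s_d(t)\le V(t)^{(d-1)/d}$ in place of the paper's explicit tracking of $L_1\cdots L_{d-1}$ via \eqref{PW-ind-bd} and monotonicity. The one small inaccuracy is that with integer side lengths bisection gives $V(t+1)\le \frac{2}{3}V(t)$ (via $\lceil s/2\rceil\le 2s/3$) rather than $V(t)=V(0)2^{-t}$, which inflates your constants $4$ and $6$ to roughly $5.5$ and $10$, still comfortably within the bounds $\PW(\LA)\le 9A(\LA)$ and $\PW(T_{L,d})\le 15L^{d-1}$ needed for the stated exponents $45\beta A(\LA)$ and $75\beta L^{d-1}$.
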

Note that the second bound of corollary implies the  bound
\eqref{SW-tau-up} in Theorem~\ref{ub-thm}. The bound
\eqref{HB-tau-up} of this theorem can either be proved by
generalizing Theorem~\ref{sw_ub-thm} to the heat bath algorithm
(the proof is actually easier for this case), or by using the
canonical path techniques of \cite{BKMP}.

\proofof{Corollary~\ref{sw_ub-cor}} Let $G=(V,E)$ be an
arbitrary finite graph, and let $V=V_1\cup V_2$ be a
decomposition of $V$ into two disjoint subsets.  Using the
definition of the partition width, one easily verifies that
\begin{equation}
\label{PW-ind-bd}
\PW(G)\leq |E_{12}| +\max\{\PW(G[V_1]),\PW(G[V_2])\}\,,
\end{equation}
where $E_{12}$ is the set of edges between $V_1$ and $V_2$.
Using this bound, it is easy to verify by induction, that for a
tree $T$ of maximal degree $d_{\max}$ and depth $D$, one has
\[\PW(T)\leq d_{\max} D,\]
which in turn gives the bound in the corollary for trees.

We are thus left with proving upper bounds for the partition
width for cubic subsets of $\bbZ^d$ and the torus $T_{L,d}$. We
start with a rectilinear subset of side-lengths $L_1\geq
L_2\geq\cdots\geq L_d$, which we denote by $[L_1,\dots,L_d]$.
To this end, we first note that
\[
\PW(G')\leq \PW(G)\,,
\]
whenever $G'$ is a spanning subgraph of $G$, implying that
$\PW([L_1,\dots,L_d])$ is non-decreasing in the side-lengths
$L_i$. Consider a set of sidelength $L_1\geq\dots\geq L_d$ with
$L_1\geq 2$. Using the bound \eqref{PW-ind-bd}, the
monotonicity of $\PW([L_1,\dots,L_d])$  and the fact that
$\lceil L_i/2\rceil\leq 2L_i/3$, whenever $L_i\geq 2$, we then
bound
\begin{equation}
\begin{aligned}
\PW\bigl(\bigl[L_1,\dots,L_d\bigr]\bigr)
&\leq
L_2\cdots L_d +
\PW\Bigl(\Bigl[\lceil \frac{L_1}2\rceil,L_2,\dots,L_d\Bigr]\Bigr)
\\
&\leq
L_2\cdots L_d + \lceil \frac{L_1}2\rceil L_3\cdots L_d +
\PW\Bigl(\Bigl[\lceil \frac{L_1}2\rceil,
\lceil\frac{L_2}2\rceil,\dots,L_d\Bigr]\Bigr)
\\
&\leq\dots
\\
&\leq
\biggl(1+\frac 23 +\Bigl(\frac 23\Bigr)^2+\cdots
\Bigl(\frac 23\Bigr)^{k-1}\biggr)L_1\cdots L_{d-1}
\\
&
\qquad
+\PW\Bigl(\Bigl[\lceil \frac{L_1}2\rceil,\lceil\frac{L_2}2\rceil,
\dots,\lceil \frac{L_d}2\rceil\Bigr]\Bigr)
\\
&\leq 3L_1\cdots L_{d-1}
+\PW\Bigl(\Bigl[\lceil \frac{L_1}2\rceil,\lceil\frac{L_2}2\rceil,
\dots,\lceil \frac{L_d}2\rceil\Bigr]\Bigr).
\end{aligned}
\end{equation}
where $k$ is the smallest $i$ such that $L_i\geq 2$. Using this
bound, it is now easy to prove by induction that
\[
\PW([L_1,\dots,L_d])\leq 9 A([L_1,\dots,L_d]),
\]
implying the desired bound for the inverse gap on rectilinear
sets.

Next, we would like to bound the partition width of the torus
$T_{L,d}$ using the just established bound for  rectilinear
sets. To this end we successively cut the torus in the $d$
different coordinate directions, proceeding as in the proof
above.  Here, however, since we have a torus rather than a
rectilinear set, we need two cuts rather than one cut in each
direction to obtain two components.  Keeping this in mind, we
get

\begin{equation}
\begin{aligned}
\PW\bigl(T_{L,d}\bigr)
&\leq
2\biggl(1+\frac 23 +\Bigl(\frac 23\Bigr)^2+\cdots
\Bigl(\frac 23\Bigr)^{d-1}\biggr)L_1\cdots L_{d-1}
\\
&
\qquad
+\PW\Bigl(\Bigl[\lceil \frac{L}2\rceil,\lceil\frac{L}2\rceil,
\dots,\lceil \frac{L}2\rceil\Bigr]\Bigr)
\\
&\leq 6L^{d-1}
+9L^{d-1}
=15L^{d-1},
\end{aligned}
\end{equation}
which implies the desired bound on the mixing time. \qed

The proof of Theorem~\ref{sw_ub-thm} is based on the following
lemma.

\begin{lemma}
\label{subgraph-lem} Let $G=(V,E)$, let $P_G=P^{\rm SW}_{G}$
and $G_0 = (V,E_0)$, where $E_0$ is an arbitrary subset of $E$.
Then
\[ \ttau(P^2_{G})
\le \ttau(P^2_{G_0}) \  e^{5\beta |E\setminus E_0|}.
\]
\end{lemma}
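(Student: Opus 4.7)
The plan is to prove Lemma~\ref{subgraph-lem} by a standard Dirichlet form comparison, after establishing two pointwise estimates: a lower bound on the SW transition kernel of $G$ in terms of the SW kernel of $G_0$, and a two-sided comparison of the stationary measures $\mu_G$ and $\mu_{G_0}$.

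\emph{First step: a pointwise lower bound on the transition kernels.} I would exploit the product structure of Step SW1. Given $\bss\in [q]^V$, split $E(\bss)=E_0(\bss)\cup E^\sharp(\bss)$, where $E_0(\bss)=E(\bss)\cap E_0$ and $E^\sharp(\bss)=E(\bss)\setminus E_0\subset E\setminus E_0$. On the event $\mathcal{D}$ that every edge of $E^\sharp(\bss)$ is deleted in Step SW1 of the chain on $G$ (an event of probability at least $(1-p)^{|E\setminus E_0|}=e^{-\beta|E\setminus E_0|}$, regardless of $\bss$), the random edge set $A\subset E(\bss)$ is contained in $E_0(\bss)$ and has exactly the distribution produced by Step SW1 of the chain on $G_0$ starting from $\bss$; Step SW2 depends only on the connected components of $(V,A)$, so the conditional distribution of $\bss'$ given $\mathcal{D}$ coincides with $P^{\mathrm{SW}}_{G_0}(\bss,\cdot)$. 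This yields the pointwise inequality $P_G(\bss,\bss')\ge e^{-\beta|E\setminus E_0|}P_{G_0}(\bss,\bss')$, and squaring gives $P^2_G(\bss,\bss')\ge e^{-2\beta|E\setminus E_0|}P^2_{G_0}(\bss,\bss')$.

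\emph{Second step: comparing stationary measures.} From \eqref{Hdef} we have $H_G(\bss)-H_{G_0}(\bss)=\sum_{xy\in E\setminus E_0}(1-\delta(\sigma_x,\sigma_y))\in[0,|E\setminus E_0|]$, so $e^{-\beta|E\setminus E_0|}Z_{G_0}\le Z_G\le Z_{G_0}$. Together these inequalities imply
\[
e^{-\beta|E\setminus E_0|}\mu_{G_0}(\bss)\le \mu_G(\bss)\le e^{\beta|E\setminus E_0|}\mu_{G_0}(\bss)
\]
for every $\bss\in[q]^V$.

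\emph{Third step: assembling the inverse gap bound.} Combining the two pointwise inequalities one obtains $\mu_G(\bss)P^2_G(\bss,\bss')\ge e^{-3\beta|E\setminus E_0|}\mu_{G_0}(\bss)P^2_{G_0}(\bss,\bss')$, and from the product bound on pairs $\mu_G(\bss)\mu_G(\bss')\le e^{2\beta|E\setminus E_0|}\mu_{G_0}(\bss)\mu_{G_0}(\bss')$. Plugging these into the definitions of $\E_{P^2_G}(g,g)$ and $\var_{\mu_G}g$ yields
\[
\E_{P^2_G}(g,g)\ge e^{-3\beta|E\setminus E_0|}\E_{P^2_{G_0}}(g,g),\qquad
\var_{\mu_G}g\le e^{2\beta|E\setminus E_0|}\var_{\mu_{G_0}}g,
\]
and taking the ratio and supremum over $g$ in the variational formula \eqref{gap-def} gives $\ttau(P^2_G)\le e^{5\beta|E\setminus E_0|}\ttau(P^2_{G_0})$, as claimed. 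There is no real obstacle here; the only care required is the coupling argument in the first step, which must verify that conditioning on $\mathcal{D}$ produces exactly the SW step for $G_0$ rather than some modified chain.
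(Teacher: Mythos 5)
Your proposal is correct and follows essentially the same route as the paper: a pointwise kernel comparison $P_G\ge e^{-\beta|E\setminus E_0|}P_{G_0}$, the two-sided comparison of $\mu_G$ and $\mu_{G_0}$ via the Hamiltonians and partition functions, and the resulting factor $e^{5\beta|E\setminus E_0|}$ in the variational formula for the inverse gap of $P^2$. The only difference is cosmetic: you phrase the first step as conditioning on the event that all edges of $E(\bss)\setminus E_0$ are deleted, whereas the paper restricts the sum over $E'\subseteq E(\bss)$ to subsets of $E(\bss)\setminus E_1$ and extracts the same exponential factor, which is the same computation.
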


\begin{proof}
Recall that a single transition of the SW dynamics consists of
two steps. Given a current Potts configuration $\bss$, the
first step identifies connected components of color classes and
performs random edge deletion with probability $e^{-\beta}$
independently for each monochromatic edge. We denote the
resulting configuration by $E'$. The second step assigns colors
independently for each new connected component (cluster) in
$E'$, resulting in a new Potts configuration $\bss'$.

Let $E(\bss) = \{xy \in E : \sigma_x = \sigma_y\}$. Let $G$ and
$G_0$ be as in the statement of the lemma, and let $E_{1} =
E\setminus E_0$. Then
\begin{eqnarray*}
 P_G(\bss, \bss')
 & = & \sum_{E' \subseteq E(\bss)}
 \left(1- e^{-\beta}\right)^{|E'|}
e^{{-}\beta |E(\bss) \setminus E'|}
\prod_{C\in \cC(V,E')} \frac{1}{q} \prod_{x,y \in C}
\delta(\sigma'_x, \sigma'_y) \\
& \ge & e^{-\beta |E_1|} \sum_{E' \subseteq E(\bss) \setminus
E_1} \left(1- e^{-\beta}\right)^{|E'|} e^{{-}\beta
|\{E(\bss)\setminus E_1\} \setminus E'|}
%\\
% & &\qquad\times
\prod_{C\in \cC(V,E')}
\frac{1}{q} \prod_{x,y \in C} \delta(\sigma'_x, \sigma'_y) \\
&\ge & e^{-\beta |E_1|}  P_{G_0}(\bss, \bss'),
\end{eqnarray*}
implying that
\begin{equation}
\label{P-2-upp}
P^2_G(\bss,\bss')\geq e^{-2\beta |E_1|}
P_{G_0}^2(\bss, \bss').
\end{equation}
Next we observe that, by the definition of $H_G$, we have
\[ e^{-\beta H_{G_0}(\bss)} e^{-\beta |E_1|}
\le  e^{-\beta H_{G}(\bss)} \le   e^{-\beta H_{G_0}(\bss)}.\]
implying that
\[ \mu_{G_0}(\bss)
e^{-\beta|E_1|} \le \mu_G(\bss)
\le \mu_{G_0}(\bss)e^{\beta|E_1|}.\]
Combined with \eqref{P-2-upp} and the definition of variance
and the Dirichlet form, this proves that
\begin{equation}
\frac{\var_{\mu_G} \ g}{\E_{P_G^2}(g,g)}  \le   \frac{e^{2\beta
|E_1|} \,\var_{\mu_{G_0}} \ g}{e^{-3\beta
|E_1|}\, \E_{P^2_{G_0}}(g,g)}
\le  e^{5\beta |E_1|}\,
\frac{\var_{\mu_{G_0}} \ g}{\E_{P^2_{G_0}}(g,g)},
\nonumber
\end{equation}
and hence
\[ \ttau(P^{2}_G)
= \sup_{g}  \frac{\var_{\mu_G} \ g}{\E_{P^2_G}(g,g)} \le
 e^{5\beta |E_1|} \ttau(P^{2}_{G_0}).\]
 \qed
\end{proof}

Having established Lemma~\ref{subgraph-lem}, we are now ready
to prove Theorem~\ref{sw_ub-thm}.

\proofof{Theorem~\ref{sw_ub-thm}} Given a graph $G=(V,E)$ and a
decomposition of $V$ into two disjoint subsets $V_1$ and $V_2$,
let $E_{12}$ be the set of edges in $E$ that join $V_1$ and
$V_2$, and let $E_0=E\setminus E_{12}$. Let $G_1=G[V_1]$,
$G_2=G[V_2]$ and $G_0=(V,E_0)$.  Observing that no edge in
$G_0$ joins $V_1$ and $V_2$, we clearly have
$P_{G_0}=P_{G_1}\times P_{G_2}$ and thus
$P^2_{G_0}=P^2_{G_1}\times P^2_{G_2}$. Combining
Lemma~\ref{subgraph-lem} with the identity \eqref{prod-tau} for
$P^2_{G_1}$ and $P^2_{G_2}$, we thus have
\[
\ttau(P^{2}_{G})
\le \max\{\ttau(P^{2}_{G_1}),\ttau(P^{2}_{G_2})\} \
e^{5\beta |E_{12}|}.\]
Applying this bound recursively for the decompositions in a
hierarchical partition $\cP$ of $G$, we obtain that
\[
\ttau(P^{2}_{G})
\leq e^{5\beta\PW(G)}\prod_{x\in V}\ttau(P^2_{G[\{x\}]}).
\]
Since the inverse gap for the SW algorithms on a single site is
one, we get $\ttau(P^{2}_{G}) \leq e^{5\beta\PW(G)}$.  Combined
with the bound \eqref{mix-gap-bd}, this proves the theorem.
\qed

\section{Contour Representation}
\label{sec:Cont-Rep}

In this section, we derive a representation for the Potts model
in terms of contours and interfaces.  This representation is a
modified version of the representation of \cite{BKM}.  We use
$T=T_{L,d}$ to denote the $d$-dimensional discrete torus of
sidelength $L$, with vertex set $V=V_{L,d}=(\bbZ/L\bbZ)^d$ and
edge set $E=E_{L,d}$, and $\Om$ to denote the SW configuration
space $2^E$, i.e., the set of all edge configurations $A\subset
E$.

 We
start from the random cluster representation \eqref{FKmes}.
Given $A\subset E$, let $V(A)=\cup_{\{x,y\}\in A}\{x,y\}$, and
let $\tilde G(A)=(V(A),A)$.  Recalling the definition of $c(A)$
as the number of connected components of the graph
$G(A)=(V,A)$, we introduce $\tilde c(A)$ as the number of
 connected components of $\tilde G(A)$.
Observe that $c(A)=\tilde c(A)+|V\setminus V(A)|$ and
$$
2|A|=2d|V(A)|-|\delta_1A|-2|\delta_2A|,
$$
where
$$
\delta_kA=\{\{x,y\}\in E\setminus A ;|\{x,y\}\cap V(A)|=k\} \qquad
k=1,2.
$$
Using the notation $\delta A=\{\{x,y\}\in E\setminus A
;|\{x,y\}\cap V(A)|>0\}$ and $\|\delta
A\|=|\delta_1A|+2|\delta_2A|$, we rewrite the weight
$w(A)=Z_T\nu_T(A)$  of a configuration $A$ in \eqref{FKmes} as
\begin{equation}
\begin{aligned}
\label{FK2}
%w(A)= p^{|A|} \left((1-p)q^{1\over
%d}\right)^{|E\setminus A|} q^{-{1\over2d}\|\delta A\|+\tilde
%c(A)}\,,
w(A)
&=\Bigl((1-p)^dq\Bigr)^{|V\setminus V(A)|}
p^{d|V(A)|}\biggl(\frac{1-p}p\biggr)^{\|\delta A\|/2}
q^{\tilde c(A)}
\\
&=
q^{\tilde c(A)}e^{-e_\dis |V\setminus V(A)|} e^{-e_\ord|V(A)|}
e^{-\kappa\|\delta A\|}
\end{aligned}
\end{equation}
where
\begin{align}
e_\dis&=-\log\Bigl((1-p)^dq\Bigr)=d\beta-\log q,
\label{e-dis-def}
\\
e_\ord&=-d\log p=-d\log\Bigl(1-e^{-\beta})=O(e^{-\beta}),
\label{e-ord-def}
\\
\kappa&=-\frac 12\log\biggl(\frac{1-p}p\biggr)
=\frac 12\log\Bigl(e^{\beta}-1\Bigr)
=\frac\beta 2 +O(e^{-\beta}).
\label{kappa-def}
\end{align}
Throughout this section we will assume that $\beta> \log
2$, so that, in particular, $\kappa>0$.

The representation \eqref{FK2}
 already shows that there are three regions of interest:
for $\beta\ll \frac 1d \log q$, the dominant configurations are
those with most vertices belonging to $V\setminus V(A)$, i.e.,
most vertices are isolated, corresponding to a ``disordered
high-temperature phase;'' for $\beta\gg \frac 1d \log q$, the
dominant configurations have most vertices in the ``ordered
region'' $V(A)$ corresponding to an ``ordered low-temperature
phase;'' and for $\beta \approx \frac 1d \log q$ and $q$ (and
hence $\kappa$) large, the dominant configurations fall into
two classes, one with mostly isolated vertices and the other
with most vertices in $V(A)$.  As we will see, even the SW
algorithm has difficulties transitioning between these two
classes, which leads to slow mixing at $\beta_0 = \frac 1d \log
q + O(q^{-1/d})$.

We will decompose $\delta A$ into several pieces called
``interfaces'' and ``contours''. More precisely, we will first
``fatten'' the set $A$ into a subset $\bV(A)$ of the continuum
torus ${\mathbf V}=({\bbR}/(L{\bbZ}))^d$ and then decompose the
boundary of $\bV(A)$ into components, some of which will be
called interfaces, and some of which will be called contours.

% To define this
%precisely, we first embed the vertex set $V$ of
%the discrete torus
%$T$ into the set
%${\mathbf V}=({\bbR}/(L{\bbZ}))^d$.
%For a set
%$X\subset {\bf V}$, we define its diameter $\diam(X)=\inf_{y\in
%{\bf V}}\sup_{x\in X}\dist(x,y)$, where $\dist(x,y)$ is the
%$\ell_\infty$-distance between the two points $x$ and $y$ in the
%torus ${\bf V}$.
For an edge $e=\{x,y\}\in E$, let $\bf e$ be the set of points
in ${\bf V}$ that lie on the line between $x$ and $y$. Given
$A$, we call a closed $k$-dimensional  unit hypercube
$\bc\subset {\bf V}$ with corners in $V$ {\it occupied\/\/} if
all edges $e$ with ${\bf e}\subset \bc$ are in $A$. We then
define the set ${\bf V}(A)\subset {\bf V}$ as the
1/4-neighborhood of the union of all occupied $k$-dimensional
hypercubes, $k=1,\dots, d$, i.e., $ {\bf V}(A)= \{x\in{\bf
V}:\; \exists\, \bc \text{ occupied, such that} \,\dist(x,\bc)
\leq 1/4\}$, where $\dist(x,y)$ is the $\ell_\infty$-distance
between the two points $x$ and $y$ in the torus ${\bf V}$ and
$\dist(x,\bc)=\inf_{y\in \bc}\dist(x,y)$. Note that the set
$\bV(A)$ is a union of cubes of side-length $1/2$ with centers
in $V_{1/2} = (\frac 12\bbZ/ L\bbZ)^d$, and that the set $V(A)$
defined at the beginning of this section is just the
intersection of ${\bf V}(A)$ with the vertex set $V$ of the
discrete torus $T$.

Alternatively, one can define the set $\bV(A)$ by constructing
its complement, the ``disordered region'' $\bV\setminus\bV(A)$
as follows: Let $(E\setminus A)^\star$ be the set of $d-1$
dimensional unit hypercubes dual to the edges in $E\setminus
A$. It is then easy to see that the union of the open
$3/4$-neighborhood of $V\setminus V(A)$ and the open
$1/4$-neighborhood of $(E\setminus A)^\star$ is just the
disordered region $\bV\setminus \bV(A)$ (see
Lemma~\ref{lem:simple} in Section~\ref{sec:geom-cont}).

For $i=1,2, \ldots , d$, let $L_i$ be the fundamental loop $L_i
= \{y \in {\bf V} {: y_j=1\text{ for all }j\neq i}\}.$ If
${\bf A}$ is a union of cubes with diameter $1/2$ and centers
in $V_{1/2}$ and $\gamma$ is a component of $\partial {\bf A}$,
then the winding vector ${\bf N}(\gamma) \in \{0,1\}^d$, with
its $i$th component being equal to the number of intersections
(mod 2) of $\gamma$ with $L_i$.

\begin{definition}
\label{def:cont} Let $A$ be a configuration in $\Omega$. The
contours corresponding to the configuration $A$ are  defined as
those connected components $\gamma$ of the boundary of ${\bf
V}(A)$ which have winding number zero, $\bf N(\gamma)=0$; the
remaining connected components of the boundary of ${\bf V}(A)$
are called the interfaces corresponding to $A$; {the set of
these interfaces is called the interface network corresponding
to $A$.} We denote the set of contours corresponding to $A$ by
$\Gamma(A)$, and the {interface network} corresponding to $A$
by $\cS(A)$.

Without reference to a configuration, we say that $\gamma$ is a
contour if there exists a configuration $A$ such that
$\gamma\in\Gamma(A)$, and similarly for an interface {and an
interface network.} We define two contours  (or two interfaces,
or one interface and one contour) $\gamma$, $\gamma'$ to be
compatible, if $\dist(\gamma,\gamma')\geq 1/2$. {We define a
contour $\gamma$ and an interface network $\cS$ to be
compatible if $\gamma$ is compatible with all interfaces in
$S$.}
\end{definition}

Note that the contours and interfaces
 corresponding to a
configuration $A\in\Omega$ are orientable in the standard
topological sense (see e.g. \cite{Bott-Tu} or\cite{Alex}); in
fact, they are oriented, with an ``ordered side'' facing
$\bV(A)$, and a ``disordered side'', facing the  complement of
$\bV(A)$. Thus our contours are ``labeled contours'', with
labels $\ell\in\{\ord,\dis\}$ indicating which side is ordered,
and which is disordered.

Note also that  the contours and interfaces corresponding to a
configuration $A$ are pairwise compatible. It is not true,
however, that any set of pairwise compatible contours and
interfaces correspond to a configuration $A\in\Om$. In order to
get a one-to-one correspondence, we define the notion of
``matching labels.''

\begin{definition}
Let  $\cS$ be an {interface network}, and let $\Gamma$ be a set
of contours.  We say that $\cS\cup\Gamma$ is a  set of {\em
matching} contours and interfaces if the following conditions
hold:

\noindent (i) The contours and interfaces in $\Gamma\cup\cS$
are pairwise compatible.

\noindent (ii) The labels are  matching in the sense that, for
each component $\mathbf C$ of $\bV\setminus\bigcup_{\gamma\in
\Gamma\cup\cS}\gamma$, there exists a label $\ell({\mathbf
C})\in\{\ord,\dis\}$ such that the ordered side of each contour
(respectively, interface) faces an  ordered component (i.e., a
component with label $\ell(C)=\ord$), and similarly for the
disordered sides.

For a set of matching contours and interfaces, we denote the
union of the ordered components by $\bV_\ord$, and the union of
the disordered components by $\bV_{\dis}$.
\end{definition}

With this definition, the set of contours and interfaces
corresponding to a configuration $A\in\Om$ are clearly
matching.  It turns out (see  Corollary~\ref{cor:match} in
Section~\ref{sec:geom-cont}) that the converse is also true,
namely that any set of matching contours and interfaces
corresponds to exactly one configuration $A\in\Om$.  Using this
fact, we rewrite the partition function $Z$ as a sum over sets
of matching contours and interfaces.

To this end, we first note that the number of components
$\tilde c(A)$ is clearly equal to the number of components
$c(\bV(A))$ of the continuum set $\bV(A)$, and hence also to
the number of components of the set $\bV_\ord$. Note also that
both $\bV_\ord$ and $\bV_{\dis}$ are functions of the matching
contours and interfaces $(\Gamma,\cS)$.
 As a consequence, the weight
\eqref{FK2} can be rewritten as a function of $(\Gamma,\cS)$
according to:

\begin{equation}
\begin{aligned}
\label{FK3} w(A)
&=
q^{c(\bV(A))}
e^{-{e_\dis}|V\setminus\bV(A)|}
e^{-{e_\ord} |V\cap \bV(A)|}
e^{-\kappa \|\partial\bV(A)\|}
\\
&=
q^{c(\bV_\ord)}
e^{-{e_\dis}|\bV_{\dis}\cap V|}
e^{-{e_\ord} |\bV_\ord\cap V|}  \prod_{S\in\cS}
e^{-\kappa \|S\|} \prod_{\gamma\in\Gamma} e^{-\kappa\|\gamma\|},
\end{aligned}
\end{equation}
where $\|\partial\bV(A)\|$
 is the number of intersections of
$\partial\bV(A)$ with the continuum set $\bE = \bigcup_{e\in
E}\mathbf e$, and similarly for $\|S\|$ and $\|\gamma\|$.
Together with the already mentioned Corollary~\ref{cor:match},
about the one-to-one correspondence between configurations and
sets of matching contours and interfaces, this leads to the
desired representation of the partition function $Z=Z_T=\sum_A
w(A)$:
\begin{equation}
\label{Z-cont-rep}
Z=\sum_{\cS,\Gamma}
q^{c(\bV_\ord)}
e^{-{e_\dis}|\bV_{\dis}\cap V|}
e^{-{e_\ord} |\bV_\ord\cap V|}  \prod_{S\in\cS}
e^{-\kappa \|S\|} \prod_{\gamma\in\Gamma} e^{-\kappa\|\gamma\|},
\end{equation}
where the sum runs over sets of matching contours and
interfaces.

Next we define the interior and exterior of a contour. To this
end, we need the following geometric lemma.  Its proof is
deferred to Section~\ref{sec:geom-cont}.
\begin{lemma}
\label{lem:int-ext-exist} Let $A$ be a configuration in
$\Omega$, and let $\gamma$ be a contour of $A$.

i) The set $\bV\setminus\gamma$ has exactly two components.

ii) Let $\bold C$ and $\bold D$ be the two components of $\bold
V\setminus \gamma$, and let $S_1$, $S_2$ be two (not
necessarily compatible) interfaces that are both compatible
with $\gamma$. Then both $S_1$ and $S_2$ lie either in $\bold
C$ or $\bold D$.
\end{lemma}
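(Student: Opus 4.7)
For Part (i), I would establish the standard topological fact that a connected, closed, orientable hypersurface $\gamma$ in $\bV = T^d$ with $[\gamma]=0$ in $H_{d-1}(\bV;\bbZ_2)$ separates $\bV$ into exactly two components. The hypothesis $\bold N(\gamma)=0$ is exactly the nullhomology condition, since the winding vector encodes this homology class via the non-degenerate intersection pairing with the generators $[L_i]$ of $H_1(\bV;\bbZ_2)$. Concretely, I would fix a base point $x_0\in\bV\setminus\gamma$ and define $\chi\colon\bV\setminus\gamma\to\bbZ_2$ by $\chi(x)=|\gamma\cap\alpha_x|\bmod 2$ for any transverse path $\alpha_x$ from $x_0$ to $x$. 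Well-definedness reduces to the loop case: differences of two such paths form loops, and any loop has even mod-$2$ intersection with $\gamma$ because this intersection number depends only on the loop's homology class and equals $\bold N(\gamma)\cdot[L]=0$. Then $\chi$ is locally constant on $\bV\setminus\gamma$, takes both values (since it flips across $\gamma$), and its level sets are the two components; connectedness of $\gamma$ rules out further components.

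For Part (ii), I would argue by contradiction. Suppose interfaces $S_1\subset\bold C$ and $S_2\subset\bold D$ are both compatible with $\gamma$, each with non-zero winding. Pick a direction $i$ with $\bold N(S_1)_i\ne 0$. The key step is to produce a loop $L'\simeq L_i$ (homotopic in $\bV$) contained entirely in $\bold D$. If this is achieved, then $L'\cap S_1=\emptyset$ since $L'\subset\bold D$ and $S_1\subset\bold C$, yielding $|L'\cap S_1|=0$; but homotopy invariance of the mod-$2$ intersection number gives $|L'\cap S_1|\equiv\bold N(S_1)_i\ne 0\pmod 2$, a contradiction. Hence no such $S_1\subset\bold C$ can exist, and every interface compatible with $\gamma$ must lie in the same component $\bold D$.

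To find $L'$: since $\bold N(\gamma)_i=0$, the loop $L_i$ meets $\gamma$ in an even number of points; pairing these and removing each pair by a local homotopy along the orientable codimension-$1$ surface $\gamma$ produces a homotopic loop disjoint from $\gamma$, lying in one of $\bold C,\bold D$. The crucial assertion is that one can choose the perturbed representative of \emph{each} $L_i$ to lie in the \emph{same} component $\bold D$; this component is then naturally identified with the exterior $\Ext(\gamma)$ to be introduced in Definition~\ref{def:Ext-gamma}.

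\textbf{Main obstacle.} The delicate point is the consistency claim that all $d$ fundamental loops can be simultaneously pushed into a single component. For an arbitrary nullhomologous connected hypersurface in $T^d$, different loops could a priori land in different components (cf.\ a Heegaard-type $\gamma$, where both sides may have complicated $\pi_1$-images). The resolution must exploit the fact that $\gamma$ is not arbitrary but arises as a component of $\partial\bV(A)$, a union of half-cubes in $\bV$; this cubical structure constrains the topology of $\gamma$ and of the two complementary regions enough to guarantee simultaneous pushability. I expect this is the technical heart of the argument and is what justifies the subsequent definition of $\Ext(\gamma)$ and the partial order on contours alluded to in Lemma~\ref{lem:partial-order}.
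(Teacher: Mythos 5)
Your part (i) is essentially the paper's argument: the parity function you call $\chi$ is exactly the paper's observation that ${\mathbf N}(\gamma)=0$ forces every closed path to cross $\gamma$ an even number of times, so $\gamma$ bounds an open set. The one point you wave at --- ``connectedness of $\gamma$ rules out further components'' --- is where the paper does supply an argument: it regards $\gamma$ as dual to a minimal cutset on the half-integer lattice and invokes Lemma~\ref{lem:cutset-conn} to get connectedness of both complementary pieces. That is a gloss you would need to fill, but it is minor.

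Part (ii) has a genuine gap, and it is the one you flag yourself. Your contradiction needs a loop in the homology class of $L_i$ (for some $i$ with $N_i(S_1)\neq 0$) lying entirely in the component $\mathbf D$ not containing $S_1$, and, more delicately, a single component receiving such representatives; but rerouting $L_i$ along $\gamma$ does not preserve its homology class (the new loop differs from $L_i$ by the cycle formed by the discarded arc and the detour, which can itself wind around the torus --- recall that contours include tubes with ${\mathbf N}(\gamma)=0$ wrapping around the torus, exactly the objects this paper must handle). Moreover, as framed, your argument is symmetric in $\mathbf C$ and $\mathbf D$: without first exhibiting the distinguished component it would ``show'' that no compatible interface lies on either side. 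The paper's proof avoids the whole issue. It first uses the contradiction hypothesis $S_1\subset\mathbf C$, $S_2\subset\mathbf D$ (hence $S_1\cap S_2=\emptyset$) together with Lemma~\ref{lem:NS1=NS2} (proved by Poincar\'e duality/wedge products) to conclude ${\mathbf N}(S_1)={\mathbf N}(S_2)$, so that $\gamma\cup S_1\cup S_2$ has zero winding vector and every loop meets it an even number of times. It then takes the fundamental loop $\omega$ in a direction with $N_i(S_1)\neq 0$ and replaces each excursion of $\omega$ through $\mathbf D$ between consecutive crossings of $\gamma$ by a detour along $\gamma$ pushed slightly into $\mathbf C$; since $\dist(S_1,\gamma)\geq 1/2$ and $S_1\subset\mathbf C$, neither the discarded arcs (which lie in $\mathbf D$) nor the detours (which stay within distance $<1/2$ of $\gamma$) meet $S_1$, so the intersection parity with $S_1$ stays odd, while the final loop misses $\gamma$ and $S_2$. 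This contradicts the evenness above without ever requiring the modified loop to represent $[L_i]$ --- which is precisely what makes your ``main obstacle'' evaporate. Without either the input ${\mathbf N}(S_1)={\mathbf N}(S_2)$ (which your proposal never uses) or a proof of your key step, the argument does not close.
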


{For the purpose of the next definition, it is convenient to
define the size of a set $\bW\subset\bV$ as the cardinality of
$\bW\cap V$.}

\begin{definition}
\label{def:Ext-gamma} Let $\gamma$ be a contour.  If there
exists an interface $S$ (not necessarily corresponding to the
same configuration) that is compatible with $\gamma$, we define
the exterior, $\Ext\gamma$, of $\gamma$ as the component of
$\bold V\setminus\gamma$ that contains $S$; otherwise we choose
the larger of the two components; finally, if both of these
components have the same size, we choose that containing a
distinguished point, $x_0\in\bold V$.  The interior is defined
as the
 set $\Int\gamma=\bold V\setminus (\gamma\cup\Ext\gamma)$.

Given a set of pairwise compatible contours $\Gamma$, we define
a contour $\gamma\in\Gamma$ to be an external contour in
$\Gamma$ if there exists no contour
$\gamma'\in\Gamma\setminus\{\gamma\}$ such that
$\Int\gamma\subset\Int\gamma'$. We also define the exterior of
$\Gamma$ as the set
\begin{equation}
\Ext\Gamma=\bigcap_{\gamma\in\Gamma}\Ext\gamma.
\end{equation}
Finally, we say that $\gamma_1,\dots,\gamma_n$ are mutually
external if they are pairwise compatible and
$\Int\gamma_i\cap\Int\gamma_j=\emptyset$ for all $i\neq j$.
\end{definition}

The next lemma states several properties of the set
$\Ext\Gamma(A)$ of a set of contours corresponding to a
configuration $A$ without interfaces.  Its proof is again
deferred to Section~\ref{sec:geom-cont}.

\begin{lemma}
\label{lem:extA} Let $A$ be a configuration with
$\cS(A)=\emptyset$, and let $A_\ext$ be the set of edges with
both endpoints in $\Ext\Gamma(A)$. Then the following
statements hold

\noindent (i) $\Ext\Gamma(A)$ is a connected subset of the
continuum torus $\bV$.

\noindent (ii) $\Ext\Gamma(A)$ is either a subset of the
ordered region $\bV(A)$ or the disordered region
$\bV\setminus\bV(A)$.

\noindent (iii) If $\Ext\Gamma(A)\subset \bV(A)$, then
$(V(A_\ext),A_\ext)$ is a connected subgraph of $T_{L,d}$.
\end{lemma}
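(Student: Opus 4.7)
The plan is to exploit that $\cS(A)=\emptyset$ forces every component of $\partial\bV(A)$ to be a null-homotopic contour bounding a well-defined interior region in the torus. I would prove (i) topologically via a reduction to external contours, and then derive (ii) and (iii) as corollaries. First, the reduction: if $\gamma'\in\Gamma(A)$ is not external, some external $\gamma\in\Gamma_\ext$ satisfies $\Int\gamma'\subset\Int\gamma$, hence $\Ext\gamma\subset\Ext\gamma'$, and so $\Ext\Gamma(A)=\bigcap_{\gamma\in\Gamma_\ext}\Ext\gamma$. Moreover, for distinct $\gamma_i,\gamma_j\in\Gamma_\ext$, externality together with pairwise compatibility $\dist(\gamma_i,\gamma_j)\geq 1/2$ forces $\gamma_j\subset\Ext\gamma_i$ and $\Int\gamma_i\cap\Int\gamma_j=\emptyset$, so the closed sets $\overline{\Int\gamma}:=\gamma\cup\Int\gamma$ are pairwise disjoint.

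For part (i), write $\Ext\Gamma(A)=\bV\setminus\bigcup_{\gamma\in\Gamma_\ext}\overline{\Int\gamma}$. Each $\overline{\Int\gamma}$ is closed and, by Lemma~\ref{lem:int-ext-exist}(i), its complement $\Ext\gamma$ is connected, so each $\overline{\Int\gamma}$ behaves like a topological closed ball in $\bV$. Connectedness of $\Ext\Gamma(A)$ then reduces to the topological fact that the torus minus a pairwise disjoint finite family of such null-homotopic ``balls'' remains connected; I would prove this by induction on $|\Gamma_\ext|$, using at each step a path-modification argument exploiting that $\overline{\Int\gamma_n}$ lies entirely inside the inductively connected set $\bigcap_{i<n}\Ext\gamma_i$ (by disjointness of interiors and compatibility implying $\gamma_n\subset\Ext\gamma_i$ for $i<n$) and that $\bV\setminus\overline{\Int\gamma_n}=\Ext\gamma_n$ is globally connected. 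This step is the main obstacle: the book-keeping relies on the forest structure of the contour partial order alluded to in Lemma~\ref{lem:partial-order}.

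Part (ii) then follows quickly: with (i) in hand, $\Ext\Gamma(A)$ is an open connected subset of $\bV$ disjoint from $\partial\bV(A)=\bigcup_{\gamma\in\Gamma(A)}\gamma$ (using $\cS(A)=\emptyset$), so it lies inside a single connected component $U$ of $\bV\setminus\partial\bV(A)$. In fact $U=\Ext\Gamma(A)$, since any point $p\in U\setminus\Ext\Gamma(A)$ would belong to some $\Int\gamma$, but then the connected set $U\subset\bV\setminus\gamma$ would have to connect $p\in\Int\gamma$ to $\Ext\Gamma(A)\subset\Ext\gamma$, contradicting Lemma~\ref{lem:int-ext-exist}(i). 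Each component of $\bV\setminus\partial\bV(A)$ is contained either in the open ordered region $\bV(A)\setminus\partial\bV(A)$ or in $\bV\setminus\bV(A)$, yielding the dichotomy.

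For part (iii), suppose $\Ext\Gamma(A)\subset\bV(A)$. The key local fact is that whenever the segment $\mathbf{e}$ of an edge $e\in E$ satisfies $\mathbf{e}\subset\bV(A)$, then $e\in A$, since otherwise the midpoint of $\mathbf{e}$ would sit in the $1/4$-neighborhood of the dual face of $e$, which is contained in $\bV\setminus\bV(A)$. Given $x,y\in V(A_\ext)=\Ext\Gamma(A)\cap V$, path-connectedness of the open set $\Ext\Gamma(A)\subset\bV(A)$ produces a continuous path from $x$ to $y$ threading through a chain of occupied hypercubes whose $1/4$-neighborhoods remain inside $\Ext\Gamma(A)$. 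Replacing each cube traversal by steps along that cube's $1$-skeleton yields an edge-path with all intermediate vertices in $\Ext\Gamma(A)\cap V$ and all segments satisfying $\mathbf{e}\subset\bV(A)$; the key local fact shows each such edge lies in $A$, and having both endpoints in $\Ext\Gamma(A)\cap V$ places it in $A_\ext$. Thus $(V(A_\ext),A_\ext)$ is connected.
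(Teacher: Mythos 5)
Your proposal is correct. Parts (i) and (ii) follow essentially the paper's own route: the paper proves (i) by citing Lemma~\ref{lem:ext-connect} (reduction to external contours plus induction with a path re-routing through the connected contour $\gamma_n$), which is exactly the argument you sketch, and (ii) is in both cases the observation that $\Ext\Gamma(A)$ is a connected subset of $\bV\setminus\partial\bV(A)$ combined with Lemma~\ref{lem:simple}(ii); your extra step identifying $\Ext\Gamma(A)$ with a \emph{full} component $U$ of $\bV\setminus\partial\bV(A)$ is not in the paper's two-line proof but is correct and, in your setup, actually load-bearing for (iii). Part (iii) is where you genuinely diverge: the paper disposes of it in one line by noting that $\Ext\Gamma(A)$, being connected and contained in $\bV(A)$, is a component of $\bV(A)$, and then invoking Lemma~\ref{lem:simple}(iii), which says components of $\bV(A)$ correspond exactly to components of $(V,A)$. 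You instead argue directly: the local fact that $\mathbf e\subset\bV(A)$ forces $e\in A$ (via Lemma~\ref{lem:simple}(iv)) plus a discretization of a continuous path in $\Ext\Gamma(A)$ through a chain of occupied cubes. This works: consecutive occupied cubes within distance $1/2$ of a common path point have integer corners, hence distance $0$, hence share a lattice corner, and the open $1/4$-neighborhood of any occupied cube meeting the path avoids $\partial\bV(A)$, so by your full-component identification its corners lie in $\Ext\Gamma(A)\cap V$; the cube edges are in $A$ by the definition of ``occupied,'' giving a path in $(V(A_\ext),A_\ext)$. The paper's route is shorter and reuses the structural Lemma~\ref{lem:simple}(iii); yours is more self-contained, makes explicit that $\Ext\Gamma(A)$ is a whole component (a point the paper leaves implicit but needs), and shows directly that the connecting edges lie in $A$, which is the version of (iii) actually used later in the heat-bath argument. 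Your parenthetical identity $V(A_\ext)=\Ext\Gamma(A)\cap V$ is asserted rather than proved, but it is both true (each exterior lattice vertex lies in $V(A)$ and its $A$-neighbor is pulled into the same component by your neighborhood argument) and not needed for the connectivity claim, so this is a harmless shortcut.
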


%\bigskip

\section{The Geometry of Contours and Interfaces}
\label{sec:geom-cont}

\subsection{Elementary Topological Notions}
\label{sec:elem-top}

We start by reviewing some standard notions from algebraic
topology. Let $T_{1/2}=((\half\bbZ)/L\bbZ)^d$ be the torus with
two points connected by an edge if they differ by $1/2$ (mod
$L$) in one component. Its vertex and edge sets will be denoted
by $V_{1/2}$ and $E_{1/2}$, respectively.

We define $k$-cells in $V_{1/2}$ as the $k$-dimensional
elementary cubes in $V_{1/2}$, so that the points in $V_{1/2}$
are $0$-cells, the edges are $1$-cells, etc. We also consider
the dual $V^*_{1/2}$ of $V_{1/2}$, consisting of the
barycenters of the $d$-cells in $V_{1/2}$, when considered as
subsets of the continuum torus $(\bbR/L\bbZ)^d$.  As usual,
given a $k$-cell $c$ in $V_{1/2}$, we define its dual as the
$d-k$ cell $c^*$ in $V^*_{1/2}$ that has the same barycenter as
$c$, and similarly for the dual of a $k$-cell in $V^*_{1/2}$.
Note that $(c^*)^*=c$.

Given a $k$-cell $c$, we define its boundary as the set of all
$(k-1)$-cells that are subcubes of $c$ (note that there are
$2k$ such subcubes).  More generally, for a set $K$ of
$k$-cells, we define the ($\mathbb Z_2$-)boundary of $K$ as the
set of all $(k-1)$-cells which are in the boundaries of an odd
number of cells in $K$.  We denote this boundary by $\partial
K$. The co-boundary of a $k$-cell $c$ is defined to be the set
of $(k+1)$-cells which have $c$ in their boundaries (there are
$2(d-k)$ such $(k+1)$-cells).

We often identify  a $k$-cell with the  closed $k$-dimensional
continuum cube with corners being the vertices of the discrete
cell.  In this context, $\partial K$ is identified with the
corresponding continuum boundary.

The $(d-1)$-cells in $V^*_{1/2}$ are called facets; the set of
all such facets is denoted by $F^*_{1/2}$. We say two facets
are connected (or adjacent) if they share a $(d-2)$-dimensional
cell in their boundaries.

A sequence of points $x_0,\dots,x_k\in V_{1/2}$ is called a
loop in $T_{1/2}$ if $x_0=x_k$ and $\{x_{l},x_{{l}+1}\}\in E_{1/2}$ for all ${l}\in\{0,k-1\}$.  Such a
loop is called a fundamental loop in the $i^{\text{th}}$
lattice direction if $k=2L$ and all edges point in the
$i^{\text{th}}$ lattice direction, and it is called an
elementary loop if $k=4$ and neither $x_0=x_2$ nor $x_1=x_3$.
Note that there are exactly $2d(2d-2)(2 L)^d$ elementary loops
in $T_{1/2}$.

Consider now a set of edges $X\subset E_{1/2}$ and its dual
$X^* = \{e^* : e \in X\}$, and assume that ${X^*}$ is
orientable. If $\partial X^*=\emptyset$, then we say that $X^*$
is an orientable closed surface, and we define the
$\bbZ_2$-winding vector of $X^*$ as the vector ${\bf
N}(X^*)=(N_1,\dots,N_d) \in \{0,1\}^d$ with $N_i$ equal to the
number of times $X^*$ intersects a fundamental loop in the
$i^{\text{th}}$ lattice direction $\mod 2$.

%If
%$\partial X^*=\emptyset$, we say that $X^*$ is orientable if it is
%possible to orient the edges in $X$ in such a way that for each
%elementary loop that intersects the $(d-1)$-dimensional cubes in
%$X^*$, an intersection parallel to the chosen orientation is always
%directly followed by an antiparallel intersection.  In other words,
%we allow for the patterns parallel-antiparallel,
%antiparallel-parallel, parallel-antiparallel-parallel-antiparallel
%or antiparallel-parallel-antiparallel-parallel, depending on whether
%the elementary loop intersects $X^*$ two or or four times.
%
%

\subsection{Preliminaries}

Our first lemma summarizes several simple properties of the
construction used in the definition of contours. It involves
both facets in $V^*_{1/2}$, the objects dual to the $1$-cells
in $V_{1/2}$, and $(d-1)$-dimensional unit hypercubes dual to
the edges in $E$.  While the first will be considered to be
abstract objects in the sense of algebraic topology, the second
will be considered to be closed hypercubes in $\bV$. We trust
that this does not cause any confusion to the reader.

{We need some notation. Given a set $D\subset E$, let
$D^\star$ be the set of $d-1$ dimensional unit hypercubes dual
to the edges in $D$, let $V_-(D)$ be the set of all vertices
$x\in V$ such that  all edges $\{x,y\}\in E$ containing the
vertex $x$ lie in $D$ (alternatively, this set can be defined
as $V_-(D)=V\setminus V(E\setminus D)$), and let
$\bV_{\dis}(D)$ be the union of the open $3/4$-neighborhood of
$V_-(D)$ and the open $1/4$-neighborhood of $D^\star$. Note
that the set $V_-(D)$ and hence the set $\bV_{\dis}(D)$ depends
implicitly on $E$. Since $E$ is fixed throughout, we suppress
this dependence in our notation.}

\begin{lemma}
\label{lem:simple}

i) For $A\in\Om$, the boundary $\partial \bV(A)$ of ${\bf
V}(A)$ is regular in the sense that each $(d-2)$-cell with
corners in $V^*_{1/2}$ is shared by either zero or two facets
in $\partial \bV(A)$.

ii) Let $A\in\Om$, and let $\bold C$ be a component of
$\bV\setminus\partial \bV(A)$.  Then $\bold C$ is either a
component of $\bV(A)$ or of $\bV\setminus \bV(A)$.

iii) Let $\bold C_1,\dots,\bold C_k$ be the connected
components of $\bV(A)$, let $V_i=V\cap\bold C_i$, and let $A_i$
be the set of edges whose endpoints lie in ${V}_i$.  Then
$(V_1,A_1)$, $\dots$, $(V_k,A_k)$ are the connected components
of $(V,A)$, and $\mathbf C_i={\bV(A_i)}$.

iv) {Let $D=E\setminus A$ and let $\bV_{\dis}(D)$ be as
defined above.} Then $\bV\setminus \bV(A)=\bV_{\dis}(D)$.

v) Let $\mathbf C$ be a component of $\bV_{\dis}({D})$,
and let ${D_{\mathbf C}}$ be the set of edges in $E$ whose
midpoint lies in $\mathbf C$. Then $\mathbf C=\bV_{\dis}({D_{\mathbf C}})$.

\end{lemma}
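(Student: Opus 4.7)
The plan is to handle the five parts essentially independently, exploiting a single unifying observation: $\bV(A)$ is the union of those closed cubes of side $1/2$ centered at points of $V_{1/2}$ whose center corresponds to a cell of $V_{1/2}$ contained in some occupied hypercube. Concretely, the little cube centered at $v\in V_{1/2}$ lies in $\bV(A)$ iff either (a) $v\in V$ is incident to some edge of $A$, (b) $v$ is the midpoint of an edge $e\in A$, or (c) $v$ is the barycenter of a $k$-face ($k\geq 2$) all of whose edges lie in $A$. Each of the five claims then reduces to a combinatorial check involving these little cubes.

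\textbf{Parts (i) and (ii).} For (i), I will fix a $(d-2)$-cell $c^*$ with corners in $V^*_{1/2}$. Exactly four $d$-cells of $V^*_{1/2}$ meet $c^*$, arranged in a $2\times 2$ pattern in the $2$-plane perpendicular to $c^*$, and the four facets in the coboundary of $c^*$ separate these cubes pairwise. The regularity claim ``zero or two facets in $\partial\bV(A)$'' is therefore equivalent to ruling out the two checkerboard occupancy patterns on the four cubes. Translating to the $2$-plane, the four cube centers correspond to a vertex $x\in V$, the midpoints of two edges of $E$ incident to $x$, and the barycenter of the $2$-face they span. A short case analysis using rules (a)--(c) eliminates both diagonals: the pattern in which the barycenter and vertex cubes are in $\bV(A)$ but neither midpoint cube is would force a $2$-face to be occupied while one of its spanning edges is not in $A$, contradicting (c); the symmetric diagonal (both midpoint cubes in, vertex cube out) contradicts (a), since an $A$-edge at the midpoint is incident to the vertex. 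Part (ii) then follows from a standard separation argument, since $\bV\setminus\partial\bV(A)$ is the disjoint union of the open sets $\mathrm{int}(\bV(A))$ and $\bV\setminus\bV(A)$.

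\textbf{Parts (iii), (iv), (v).} For (iii), the forward direction is immediate: if $e=\{x,y\}\in A$, then $\mathbf e$ is an occupied $1$-face whose $1/4$-neighborhood lies in $\bV(A)$ and joins $x$ to $y$. Conversely, any two little cubes adjacent within $\bV(A)$ admit an $A$-edge certificate relating their centers, so continuum-connectivity in $\bV(A)$ implies graph-connectivity in $(V,A)$. The identity $\mathbf C_i=\bV(A_i)$ then follows because every occupied hypercube used in building $\mathbf C_i$ has all its vertices in the single component $V_i$. For (iv), I will argue both inclusions by a local case analysis: if $p\notin\bV(A)$, then every little cube containing $p$ is excluded from $\bV(A)$, and inspecting the three possible types of its center places $p$ within the open $3/4$-neighborhood of an isolated vertex or within the open $1/4$-neighborhood of the facet dual to some $e\in D$; conversely, each such neighborhood stays at $\ell_\infty$-distance $>1/4$ from every occupied hypercube, since any occupied cube containing one of these critical edges would force that edge into $A$. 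Finally, (v) is a localization statement: any edge or isolated vertex whose neighborhood contributes to a component $\mathbf C$ of $\bV_{\dis}(D)$ has its midpoint (respectively the vertex itself) lying in $\mathbf C$ by connectedness, so it belongs to $D_{\mathbf C}$ (respectively to $V_-(D)\cap\mathbf C$), and $\bV_{\dis}(D_{\mathbf C})$ therefore recovers exactly $\mathbf C$.

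\textbf{Main obstacle.} The only step demanding real care is (i): one must precisely identify the four cubes around $c^*$ with the four cells of $V_{1/2}$ in the surrounding $2$-face, after which excluding both checkerboards is a short inspection. The remaining parts reduce to careful bookkeeping of neighborhoods and distances once the occupancy rules (a)--(c) and the resulting local geometric picture from (i) are in hand.
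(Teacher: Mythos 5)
Your proof is correct and follows essentially the same route as the paper's: the same description of $\bV(A)$ as a union of side-$1/2$ cubes governed by your occupancy rules (a)--(c), the same local analysis around a $(d-2)$-cell for part (i) (your reduction to excluding the two checkerboard patterns is, if anything, a cleaner accounting than the paper's case count), and the same neighborhood and component bookkeeping for parts (ii)--(v), where the paper is equally brief. One shared caveat: for $d\ge 3$ the four cube centers surrounding a $(d-2)$-cell need not be a vertex, two edge midpoints and a $2$-face barycenter, but are in general the barycenters of an $m$-face, two $(m+1)$-faces and an $(m+2)$-face; your checkerboard exclusion still goes through verbatim because occupancy is inherited by subfaces, so this is a harmless imprecision that the paper's own proof makes as well.
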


\begin{proof}
(i) Given a configuration $A\in \Om$, let $V_{1/2}(A)$ be the
intersection of ${\bf V}(A)$ with the vertex set $V_{1/2}$ of
the discrete torus $T_{1/2}$. The boundary of ${\bf V}(A)$ can
then be rewritten as the union of all facets that are dual to
an edge $e \in E_{1/2}$ joining $V_{1/2}(A)$ to its complement
in $V_{1/2}$.  Using this fact, we easily prove the first
statement of the lemma.

%Indeed, since $\partial{\bf V}(A)$ is a boundary, it
%follows that each $(d-2)$-cell in $V^*_{1/2}$ has $0$, $2$ or $4$
%facets $f\subset \partial{\bf V}(A)$ in its co-boundary.  So we
%need only rule out the case of $4$ facets in the co-boundary.
%Assume the contrary, namely assume that
Indeed, let $e$ be a $(d-2)$-cell with corners in $V^*_{1/2}$,
and let $f_1,f_2,f_3,f_4$ be the four facets in the co-boundary
of $e$. Then $f_1,f_2,f_3,f_4$ are dual to edges $x_1x_2,
x_2x_3, x_3x_4, x_4x_1$, where the points $x_1, x_2, x_3, x_4$
comprise a closed path of length four in the torus $T_{1/2}$.
Let $c(x_1),\dots,c(x_4)$ be the four $d$-cells with centers
$x_1,\dots x_4$. Exactly one of these four cubes, say the cube
$c(x_1)$, will be a cube whose center lies in the original
vertex set $V$ (recall that vertices in $V$ have twice the
spacing of those in $V_{1/2}$). Our convention of filling in
only those hypercubes in the original torus $T_{L,d}$ whose
edges are all in $A$ implies that either none or all or exactly
one of the four cubes $c(x_1),\dots,c(x_4)$ lies in $\bV(A)$.
In the first two cases, none of the facets $f_1,f_2,f_3,f_4$
are in the boundary of $\bV(A)$, and in the third case exactly
two are in the boundary of $\bV(A)$, which proves that each
$(d-2)$-cell
 with corners
in $V^*_{1/2}$ is shared by either zero or two facets, as
claimed.

(ii) This is obvious.

(iii) This now  follows immediately from our fattening
procedure for the ordered region, which  respects the component
structure of $(V,A)$.

{(iv) We first prove that $\bV_{\dis}(E\setminus A)\subset
\bV\setminus \bV(A)$.  Consider first an edge $e\in E\setminus
A$ and its dual $e^\star$.  We claim that all points with
distance less than $1/4$ from $e^\star$ lie in $\bV\setminus
\bV(A)$. Since the set $\bV(A)$ increases if $A$ increases, it
is clearly enough to prove this statement for
$A=E\setminus\{e\}$, in which case it follows immediately from
the way we set up our fattening procedure for $\bV(A)$.  In a
similar way, one proves that all points with distance less than
$3/4$ from the vertices in ${V_-(D)}=V\setminus V(A)$ lie
in $\bV\setminus \bV(A)$.

To  prove that $\bV\setminus \bV(A)\subset\bV_{\dis}(E\setminus
A)$, let $x\in \bV\setminus \bV(A)$, and let $\bc\subset\bV$ be
a $d$-dimensional unit cube with corners in $V$ such that
$x\in\bc$.  If $x$ has distance less than $1/4$ from the center
of $\bc$, then at least one edge $e\subset \bc\cap V$ must lie
in $E\setminus A$, since otherwise $\bc$ would have been filled
in our fattening procedure, contradicting $x\in \bV\setminus
\bV(A)$.  But the $1/4$ neighborhood $\mathcal
N_{1/4}(e^\star)$ of $e^\star$ contains all points with
distance less than $1/4$ from the center of $\bc$, implying
that $x\in \mathcal N_{1/4}(e^\star)\subset
\bV_{\dis}(E\setminus A)$. If $x$ has distance $1/4$ or more
from the center of $\bc$, it must have distance less than $1/4$
from a $d-1$ dimensional unit cube $\bc_1$ in the boundary of
$\bc$.  If the projection, $x_1$, of $x$ onto $\bc_1$ has
distance less than $1/4$ from the center of $\bc_1$, then one
of the edges in $\bc_1\cap V$ must lie in $E\setminus A$, which
again implies $x\in \mathcal N_{1/4}(e^\star)\subset
\bV_{\dis}(E\setminus A)$. Continuing inductively, we are left
with the case that $x$ has distance at most $1/4$ from one of
the corners, $y$, of $\bc$.  But this means that none of the
edges containing $y$ can lie in $A$, thus $y\in V\setminus
V(A)$, and hence $x\in \mathcal N_{3/4}(V\setminus V(A))\subset
\bV_{\dis}(E\setminus A)$.

(v) Noting that the midpoint of an edge lies in
$\bV_{\dis}(E\setminus A)$ if and only if its dual lies in
$\bV_{\dis}(E\setminus A)$, this is an immediate consequence of
our fattening procedure for the disordered region.} \qed
\end{proof}

\begin{lemma}
\label{lem:NS1=NS2} Let $S_1$ and $S_2$ be two interfaces with
$S_1\cap S_2=\emptyset$.  Then $\bold N(S_1)=\bold N(S_2)$.
\end{lemma}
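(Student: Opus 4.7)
The claim is equivalent, via the isomorphism $H_{d-1}(T^d;\bbZ_2)\cong\bbZ_2^d$ furnished by the winding-vector map, to showing $[S_1]+[S_2]=0$ in $H_{d-1}(T^d;\bbZ_2)$, i.e.\ that the cellular $\bbZ_2$-cycle $S_1+S_2$ is a $\bbZ_2$-boundary. The plan is therefore to construct an explicit $d$-chain $X\subset V^*_{1/2}$ with $\partial X=S_1+S_2$, built by colouring each $d$-cell of $V^*_{1/2}$ according to the parity of the number of facets in $S_1\cup S_2$ crossed along a cellular path from a fixed basepoint $c_0$ disjoint from $S_1\cup S_2$. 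If this coloring is path-independent, then taking $X$ to be the set of $d$-cells with odd parity automatically gives $\partial X=S_1+S_2$.

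Path-independence reduces to the statement that every closed cellular loop in the $d$-cell adjacency graph of $V^*_{1/2}$ has even $\bbZ_2$-intersection number with $S_1\cup S_2$. Loops bounding a 2-cell contribute evenly because $S_1+S_2$ is a closed $\bbZ_2$-chain (each of $S_1,S_2$ being a boundary component of some $\partial\bV(A)$, hence $\bbZ_2$-closed by Lemma~\ref{lem:simple}(i)). The remaining obstruction lives in $\pi_1(T^d)\cong\bbZ^d$, and the loop $L_i$ contributes precisely $N_i(S_1)+N_i(S_2)\pmod 2$, so a naive implementation is circular.

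To escape the circle, I would use the non-separating character of each interface. Since $\mathbf{N}(S_1)\neq 0$ and $H_{d-1}(T^d;\bbZ)$ is torsion-free, $S_1$ is not $\bbZ$-null-homologous and therefore does not separate $\bV$, so $\bV\setminus S_1$ is a connected open manifold containing all of $S_2$. Combining the two-sidedness of $S_1$ with the Mayer-Vietoris sequence for $T^d=U_1\cup(\bV\setminus S_1)$, where $U_1$ is a cellular tubular neighborhood of $S_1$ (a trivial $I$-bundle over $S_1$ thanks to Lemma~\ref{lem:simple}(i)), one identifies the image of $H_{d-1}(\bV\setminus S_1;\bbZ_2)\to H_{d-1}(T^d;\bbZ_2)$ with the line $\bbZ_2\cdot[S_1]$. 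Since $[S_2]\neq 0$ (as $S_2$ is an interface), this forces $[S_2]=[S_1]$ and closes the argument.

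The main obstacle is to carry out the Mayer-Vietoris step rigorously in the cubical cellular setting, in particular producing a genuinely cellular tubular neighborhood and identifying the connecting homomorphism. A more hands-on alternative that I would pursue in parallel is to lift to the universal cover $\bbR^d\to T^d$: by Jordan-Brouwer each component of the lift $\tilde S_1$ separates $\bbR^d$, giving a well-defined side function on $\bbR^d\setminus\tilde S_1$. Because $S_1\cap S_2=\emptyset$, every connected component of the lift of $S_2$ sits inside one connected slab of $\bbR^d\setminus\tilde S_1$; the $\bbZ^d$-equivariance of the side function would then force the $\bbZ_2$-crossing parity of any fundamental loop $L_i$ with $S_2$ to equal its crossing parity with $S_1$, yielding $N_i(S_1)=N_i(S_2)$ directly.
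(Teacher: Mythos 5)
Your third paragraph has the right skeleton, and in fact, once completed, it is essentially the paper's own argument in homological clothing: the endgame ``$[S_2]$ lies in the image of $H_{d-1}(\bV\setminus S_1;\bbZ_2)\to H_{d-1}(T_{L,d};\bbZ_2)$, that image is $\bbZ_2\cdot[S_1]$, and $[S_2]\neq 0$, hence $[S_2]=[S_1]$'' is sound. But the pivotal identification of that image is precisely where the content of the lemma sits, and you leave it unproved (you yourself call it ``the main obstacle''). The image of the complement is, in any closed $d$-manifold, the annihilator of $[S_1]$ under the $\bbZ_2$-intersection pairing $H_{d-1}\times H_{d-1}\to H_{d-2}$ (disjointness kills the pairing), and the statement that for the torus this annihilator of a nonzero class is just $\{0,[S_1]\}$ is exactly the computation the paper performs: $N_i(S_1)N_j(S_2)+N_j(S_1)N_i(S_2)=0$ for all $i<j$ forces proportionality, hence equality, of the two nonzero vectors. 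Mayer--Vietoris with a tubular neighborhood does not hand you the line $\bbZ_2\cdot[S_1]$ without computing the connecting map, which amounts to the same cup-product/Poincar\'e-duality input the paper cites from Bott--Tu; note also that the ``line'' statement is special to $T^d$ (in a general manifold the annihilator can be much larger), so some torus-specific computation is unavoidable. So your route, made rigorous, collapses onto the paper's proof rather than replacing it.

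The ``hands-on'' universal-cover alternative in your last paragraph has a genuine gap. Beyond the minor point that the components of $\tilde S_1$ are non-compact (so Jordan--Brouwer must be replaced by a duality statement for properly embedded hypersurfaces), the mechanism you describe does not yield the conclusion. Confinement of each component of $\tilde S_2$ to a single slab, together with equivariance of the side function $\phi$ of $\tilde S_1$ under the deck group, only tells you that deck translations which \emph{stabilize} a component of $\tilde S_2$ leave $\phi$ unchanged; i.e., the image of $H_1(S_2;\bbZ_2)$ in $H_1(T_{L,d};\bbZ_2)$ lies in $\ker\mathbf N(S_1)$ (and symmetrically with the roles of $S_1,S_2$ exchanged). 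That is strictly weaker than $\mathbf N(S_1)=\mathbf N(S_2)$ unless you also prove that this image fills out the whole hyperplane $\ker\mathbf N(S_2)$, which you do not address and which is far from obvious for the rough cubical interfaces arising here. Nor can you argue that inside one slab the crossings of a lifted fundamental loop with $\tilde S_2$ pair up: two parallel coordinate tori already show a path crossing a slab can meet $\tilde S_2$ an odd number of times. So ``equivariance of the side function'' does not force equal crossing parities of $L_i$ with $S_1$ and $S_2$; the intersection-form input cannot be bypassed this way.
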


\begin{proof}
The interfaces $S_1$ and $S_2$ are closed orientable
submanifolds of the torus $(\bbR/L\bbZ)^d$.  In the language of
algebraic topology, the winding numbers ${\bf N}(S_1)$ and
${\bf N}(S_2)$ are the Poincare duals of the submanifolds $S_1$
and $S_2$.  The Poincare dual of the transverse intersection of
two such submanifolds is then given by the wedge product of the
Poincare duals of the submanifolds, see \cite{Bott-Tu}, Section
6.  Since empty intersection is a special case of transverse
intersection, we conclude that the wedge product of ${\bf
N}(S_1)$ and ${\bf N}(S_2)$ must be zero. Let $\vec e_i$ be the
unit vector whose $j^{\text{th}}$ coordinate is $\delta_{i,j}$.
Recalling that $\vec e_i\wedge\vec e_i=0$ and $\vec
e_i\wedge\vec e_j=-\vec e_j\wedge\vec e_i$ for all $i,j$, the
condition ${\bf N}(S_1)\wedge{\bf N}(S_2)=0$ is equivalent to
the $d\choose 2$ conditions
$N_i(S_1)N_j(S_2)-N_j(S_1)N_i(S_2)=0$, which implies that ${\bf
N}(S_1)$ and ${\bf N}(S_2)$ are multiples of each other. Since
both are different from 0, we conclude that ${\bf N}(S_1)={\bf
N}(S_2)$. \qed
\end{proof}

\smallskip

We close this section with an elementary lemma about
``cutsets''. It is best formulated in the context of a general
connected graph $G=(V,E)$.  As usual, a subset $E'\subset E$ is
a {\em cutset} if $(V,E\setminus E')$ is disconnected.  It is
called
 a {\em minimal} cutset if no proper subset of $E'$
is a cutset.  The following lemma is elementary; its proof is
left to the reader.

\begin{lemma}\label{lem:cutset-conn}
Let $G=(V,E)$ be a connected graph, and let $W\subset V$.  If
the edge-boundary of $W$ is a minimal cutset then both the
induced graph on $W$ and $V\setminus W$ are connected.
\end{lemma}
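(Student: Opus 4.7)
The plan is to prove the lemma by contradiction, handling both $W$ and $V\setminus W$ symmetrically, so I will focus on showing $G[W]$ is connected (the argument for $G[V\setminus W]$ is identical). Let $E'=\partial_E W$ denote the edge-boundary of $W$ and assume $E'$ is a minimal cutset but that, for the sake of contradiction, $G[W]$ is disconnected.

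First I would partition $W$ into two nonempty disjoint sets $W_1,W_2$ such that no edge of $G[W]$ joins $W_1$ to $W_2$ (take $W_1$ to be any connected component of $G[W]$ and $W_2=W\setminus W_1$). The key next step is to observe that, since $G$ itself is connected and there are no edges between $W_1$ and $W_2$ inside $W$, the set $W_2$ cannot be a union of connected components of $G$; hence $W_2$ must have at least one edge to $V\setminus W$. In particular, the set
\[
E''=\{\,xy\in E : x\in W_1,\ y\in V\setminus W\,\}
\]
is a \emph{proper} subset of $E'$ (since $E'\setminus E''$ contains the edges from $W_2$ to $V\setminus W$, of which there is at least one).

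Next I would verify that $E''$ is itself a cutset. After deleting $E''$, any edge incident to $W_1$ either lies inside $W_1$, or would have had its other endpoint in $W_2$ (no such edges exist by choice of the partition), or would have gone to $V\setminus W$ (all such edges are in $E''$ and hence deleted). Therefore in $(V,E\setminus E'')$ there is no path from $W_1$ to its complement, so $E''$ separates $W_1$ from $V\setminus W_1\neq\emptyset$, i.e.\ $E''$ is a cutset. This contradicts the minimality of $E'$, completing the proof that $G[W]$ is connected; applying the same argument with the roles of $W$ and $V\setminus W$ swapped yields the connectedness of $G[V\setminus W]$.

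The argument is essentially bookkeeping, and the only delicate point, which I would want to make explicit, is the use of the connectedness of $G$ to guarantee that both $W_1$ and $W_2$ (and not merely one of them) send at least one edge across $\partial_E W$; without this step the ``proper subset'' part of the contradiction fails. No deeper machinery is required.
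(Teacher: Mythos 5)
Your proof is correct: choosing $W_1$ to be a connected component of $G[W]$, using connectedness of $G$ to produce an edge from $W_2=W\setminus W_1$ to $V\setminus W$, and checking that the edges from $W_1$ to $V\setminus W$ form a cutset properly contained in $\partial_E W$ is exactly the elementary argument the authors have in mind; the paper itself leaves the proof to the reader, so there is nothing to compare beyond this. One tiny remark: for the contradiction only the edge from $W_2$ across $\partial_E W$ is actually needed (it gives properness of $E''\subset E'$), while an edge from $W_1$ across the boundary is automatic from connectedness of $G$ but plays no role, so your closing caveat overstates what must be checked.
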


\subsection{Proofs of Lemma~\ref{lem:int-ext-exist} and
\ref{lem:extA}}

We start with the proof of Lemma~\ref{lem:int-ext-exist}.

\smallskip

\proofof{Lemma~\ref{lem:int-ext-exist}} i) $\gamma$ has no
boundary, is orientable, and has winding number zero. Therefore
any closed path intersects $\gamma$ an even number of times,
implying that $\gamma$ is the boundary of some open set $\bold
C$.  Let $\bold D$ be the set $\bold D=\bold V\setminus (\bold
C\cup\gamma)$.  Then  both $\bold C$ and $\bold D$ must be
connected. Indeed, considering $\gamma$ as dual to a minimal
cutset on the half-integer lattice, the connectedness of $\bold
C$ and $\bold D$ follows immediately from the corresponding
statement (Lemma~\ref{lem:cutset-conn}) for minimal cutsets.

ii) The interfaces $S_1,S_2$ are connected subsets of $\bold
V$. Since they do not intersect $\gamma$, each of them must lie
in one of the connected components of $\bold V\setminus\gamma$.
We will have to prove that they both lie in the same component
of $\bold V\setminus \gamma$.

Assume, by contradiction, that $S_1\subset \bold C$ and
$S_2\subset \bold D$. Since $\bold C\cap\bold D=\emptyset$,
this implies $S_1\cap S_2=\emptyset$, which in turn, by
Lemma~\ref{lem:NS1=NS2}
 implies that ${\bf
N}(S_1)={\bf N}(S_2)$.  Together with the fact that ${\bf
N}(\gamma)=0$ we conclude that the winding number of
$\gamma\cup S_1\cup S_2$ is zero, implying that any loop in
$\bold V$  must intersect $\gamma\cup S_1\cup S_2$ an even
number of times.

Consider now a component $N_i(S_1)$ of the winding vector ${\bf
N}(S_1)$ that is not equal to $0$, and let $\omega$ be a
fundamental loop in the $i^{\text{th}}$ direction, oriented in
an arbitrary but fixed fashion.  To reach our contradiction, we
will modify $\omega$ in such a way that it does not intersect
$\gamma$ or $S_2$, while intersecting $S_1$ an odd number of
times.  First, we note that the original loop $\omega$
intersects $S_1$ an odd number of times.  If it does not
intersect $\gamma$, then it does not intersect $S_2$ either,
since $S_2$ lies in $\bold D$, while $S_1$ lies in $\bold C$.
If $\omega$ intersects $\gamma$, let $x$ be one of the
intersection points, and let $y$ be the next intersection point
(since ${\bf N}(\gamma)=0$, the number of these intersection
points must be even, so there must be such a $y$). Recalling
that $\gamma$ is connected, we now replace the segment of
$\omega$ that joins $x$ to $y$ by a path in $\gamma$, and then
deform this segment in such a way that it lies completely in
$\bold C$ without intersecting $S_1$ (this is possible since
$\dist(S_1,\gamma)\geq 1/2$).  Given that the original segment
from $x$ to $y$ did not intersect $S_1$ since $S_1\subset \bold
C$ while the segment was a path in $\bold D$, we have not
changed the number of intersections with $S_1$, so this number
is still odd. Repeating this step until we have no
intersections with $\gamma$, we remove all parts of $\omega$
that lie outside of $\bold C$, ending up with a path inside
$\bold C$ that intersects $S_1$ an odd number of times, as
desired.  Note that the final $\omega$ may not be a lattice
path, but nevertheless it gives the desired contradiction,
since winding numbers are topological invariants in the
continuum as well. \qed

\begin{definition}
Let $\gamma$ and $\gamma'$ be two contours.  We say that
$\gamma$ and $\gamma'$ are mutually external (and write
$\gamma\perp\gamma'$) if $\gamma$ is compatible with $\gamma'$
and $\Int\gamma\cap\Int\gamma'=\emptyset$, and we say that
$\gamma$ lies inside of $\gamma'$ (and write $\gamma<\gamma'$)
if $\gamma$ is compatible with $\gamma'$ and $\Int
\gamma\subset\Int\gamma'$.
\end{definition}

\begin{lemma}
\label{lem:partial-order} Let $\gamma$, $\gamma'$ and
$\gamma''$ be contours.

\noindent (i) If $\gamma$ and $\gamma'$ are compatible, then
exactly one of the following  three holds: $\gamma<\gamma'$,
$\gamma'<\gamma$, or $\gamma\perp\gamma'$.

\noindent (ii) If $\gamma\perp\gamma'$, then
$\dist(\Int\gamma,\Int\gamma')\geq 1/2$, and if
$\gamma<\gamma'$, then $\dist(\Int\gamma,\Ext\gamma')\geq 1/2$.

\noindent (iii) If $\gamma<\gamma'$ and $\gamma'<\gamma''$ then
$\gamma<\gamma''$.

\noindent (iv)If $\gamma<\gamma'$ and $\gamma'\perp\gamma''$
then $\gamma\perp\gamma''$.
\end{lemma}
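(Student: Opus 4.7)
The plan is to prove the four parts in order, using Lemma~\ref{lem:int-ext-exist} throughout to refer to the two components $\Int\gamma$ and $\Ext\gamma$ of $\bV\setminus\gamma$. For part~(i), compatibility forces $\gamma\cap\gamma'=\emptyset$ and $\dist(\gamma,\gamma')\geq 1/2$, and since each contour is connected, $\gamma'$ lies in a single component of $\bV\setminus\gamma$ and symmetrically for $\gamma$. This yields four a priori cases. Three of them give the three desired alternatives: if $\gamma'\subset\Ext\gamma$ and $\gamma\subset\Ext\gamma'$, then the connected open set $\Int\gamma$, being disjoint from $\gamma'$ and having boundary $\gamma\subset\Ext\gamma'$, must satisfy $\Int\gamma\subset\Ext\gamma'$, and symmetrically $\Int\gamma'\subset\Ext\gamma$, giving $\gamma\perp\gamma'$; if $\gamma'\subset\Int\gamma$ and $\gamma\subset\Ext\gamma'$, then a neighborhood of $\gamma'$ lies in the open set $\Int\gamma$, so the connected set $\Int\gamma'$ (which touches this neighborhood and is disjoint from $\gamma$) cannot escape $\Int\gamma$, yielding $\Int\gamma'\subset\Int\gamma$ and hence $\gamma'<\gamma$; the symmetric case gives $\gamma<\gamma'$.

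The remaining case (D), where $\gamma'\subset\Int\gamma$ and $\gamma\subset\Int\gamma'$ simultaneously, must be excluded; this is the main obstacle. Running the same ``cannot escape'' argument on the exteriors yields $\Ext\gamma\subset\Int\gamma'$ and $\Ext\gamma'\subset\Int\gamma$, so in particular $\Ext\gamma\cap\Ext\gamma'=\emptyset$. To derive a contradiction I apply Definition~\ref{def:Ext-gamma}. When neither $\gamma$ nor $\gamma'$ admits a compatible interface, $\Ext$ is the strictly larger component with the distinguished point $x_0$ as tiebreaker, so the cyclic chain
\[
|\Ext\gamma\cap V|\leq|\Int\gamma'\cap V|\leq|\Ext\gamma'\cap V|\leq|\Int\gamma\cap V|\leq|\Ext\gamma\cap V|
\]
forces equality throughout; the $x_0$-tiebreaker then places $x_0$ in both $\Ext\gamma$ and $\Ext\gamma'$, contradicting $\Ext\gamma\cap\Ext\gamma'=\emptyset$. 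When $\gamma$ admits a compatible interface $S$, then $S\subset\Ext\gamma\subset\Int\gamma'$; $S$ cannot also be compatible with $\gamma'$ (else $S\subset\Ext\gamma'\cap\Int\gamma'=\emptyset$), so $\dist(S,\gamma')<1/2$; since $S$ has nonzero winding and $\gamma'$ has zero winding, a direct intersection count with a fundamental loop in some direction $i$ with $N_i(S)\neq 0$ (in the spirit of Lemma~\ref{lem:NS1=NS2}) produces the contradiction.

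Part~(ii) follows from a straight-line crossing argument applied in either sub-case: a segment from $x\in\Int\gamma$ to $y\in\Int\gamma'$ in the $\perp$ case (respectively $y\in\Ext\gamma'\subset\Ext\gamma$ in the $<$ case) must cross both $\gamma$ and $\gamma'$ at points $p, q$ on the segment, and since $\ell_\infty$ distance is additive along straight segments, $\dist(x,y)\geq\dist(p,q)\geq\dist(\gamma,\gamma')\geq 1/2$. Part~(iii) combines the set-theoretic inclusion $\Int\gamma\subset\Int\gamma'\subset\Int\gamma''$ (immediate from the definition of $<$) with the same crossing trick: part~(i) applied to $\gamma<\gamma'$ and $\gamma'<\gamma''$ yields $\gamma\subset\Int\gamma'$ and $\gamma''\subset\Ext\gamma'$, so any segment between $\gamma$ and $\gamma''$ crosses $\gamma'$, giving $\dist(\gamma,\gamma'')\geq\dist(\gamma,\gamma')+\dist(\gamma',\gamma'')\geq 1$ and hence compatibility. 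Part~(iv) is analogous: $\Int\gamma\subset\Int\gamma'$ is disjoint from $\Int\gamma''\subset\Ext\gamma'$, so $\Int\gamma\cap\Int\gamma''=\emptyset$, and compatibility follows from the same crossing argument.
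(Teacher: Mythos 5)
Most of your argument is sound and runs parallel to the paper's proof: the four containment cases and the connectedness ("cannot escape") arguments give the trichotomy, the doubly-nested case is excluded through the definition of $\Ext$, and your segment-crossing arguments for (ii)--(iv) are valid (your observation that the $\ell_\infty$ distance is additive along a straight segment is correct, since all coordinate differences scale by the same interpolation parameter, so the crossings of $\gamma'$ do yield $\dist(\gamma,\gamma'')\geq 1$). The one step that does not hold up as written is the final contradiction in the subcase of (i) where $\gamma$ admits a compatible interface $S$. You correctly derive $S\subset\Ext\gamma\subset\Int\gamma'$ and that $S$ cannot be compatible with $\gamma'$, hence $\dist(S,\gamma')<1/2$; but the appeal to ``a direct intersection count with a fundamental loop'' cannot finish the job. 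The data available at that point -- $S$ disjoint from $\gamma'$, contained in one component of $\bV\setminus\gamma'$, with $\mathbf N(S)\neq 0$ and $\mathbf N(\gamma')=0$ -- is topologically consistent (think of a small contractible $\gamma'$ and a winding surface $S$ lying in the large component of its complement), so no parity count of intersections with a fundamental loop can produce a contradiction; Lemma~\ref{lem:NS1=NS2} concerns two disjoint interfaces and says nothing about the pair $(S,\gamma')$.

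The missing ingredient is metric rather than topological, and it is exactly the step the paper carries out at this point: since $\gamma'\subset\Int\gamma$, a point of $S$ within $\ell_\infty$-distance $<1/2$ of $\gamma'$ is within distance $<1/2$ of $\Int\gamma$; the straight segment realizing this distance starts in $\Ext\gamma$ (because $S\subset\Ext\gamma$) and ends in $\Int\gamma$, so it crosses $\gamma$ at a point within distance $<1/2$ of $S$, contradicting the compatibility $\dist(S,\gamma)\geq 1/2$. (Equivalently, one may observe that two disjoint unions of closed facets of the half-integer lattice are automatically at $\ell_\infty$-distance at least $1/2$, so ``$S\subset\Int\gamma'$'' and ``$\dist(S,\gamma')<1/2$'' are already incompatible.) With that replacement the subcase closes; note also that the subcase in which only $\gamma'$ (but not $\gamma$) admits a compatible interface should be mentioned explicitly, though it follows by exchanging the roles of $\gamma$ and $\gamma'$ in the same argument. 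The remainder of your proof, including the cyclic-size chain with the $x_0$ tiebreaker in the no-interface subcase, is correct and is a clean variant of the paper's argument that the two exteriors must meet.
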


\begin{proof}
(i) Let $\Int\gamma$ and $\Ext\gamma$ be the two components of
$\bold V\setminus \gamma$.  Since $\gamma'$ is connected and
$\gamma\cap \gamma'=\emptyset$, $\gamma'$ must lie in one of
the two components of $\bV\setminus \gamma$; therefore we have
that either $\gamma'\subset \Int\gamma$ or
$\gamma'\subset\Ext\gamma$.  Since the same statement holds
with the roles of $\gamma$ and $\gamma'$ exchanged, we get that
exactly one of the following four cases must hold:
\begin{align}
\label{p-ord-case1}
\gamma\subset\Ext\gamma'
\quad&\text{and}\quad
\gamma'\subset\Ext\gamma
\\
\label{p-ord-case2}
\gamma\subset\Int\gamma'
\quad&\text{and}\quad
\gamma'\subset\Ext\gamma
\\
\label{p-ord-case3}
\gamma\subset\Ext\gamma'
\quad&\text{and}\quad
\gamma'\subset\Int\gamma
\\
\label{p-ord-case4}
\gamma\subset\Int\gamma'
\quad&\text{and}\quad
\gamma'\subset\Int\gamma
\end{align}
We claim that the last case is impossible.

To prove this, we first show that \eqref{p-ord-case4} implies
that $\Ext\gamma\cap\Ext\gamma'\neq\emptyset$.  Recalling
Definition~\ref{def:Ext-gamma} of $\Ext\gamma$, note that
$\Ext\gamma$ is defined differently in several distinct cases.
Let us first consider the case that there exists an interface
$S$ with $\dist(S,\gamma)\geq 1/2$ and $S\subset\Ext\gamma$. If
$\dist(S,\gamma')<1/2$, then also $\dist(S,\Int\gamma)<1/2$ by
our assumption that $\gamma'\subset\Int\gamma$.  But this is
not compatible with $\dist(S,\gamma)\geq 1/2$ and
$S\subset\Ext\gamma$. Thus we have $\dist(S,\gamma')\geq 1/2$.
But if $\dist(S,\gamma')\geq 1/2$, then $S\subset\Ext\gamma'$
as well, implying in particular that
$\Ext\gamma\cap\Ext\gamma'\neq\emptyset$.

Let us now consider the cases in the definitions of
$\Ext\gamma$ and $\Ext\gamma'$ such that there is no interface
$S$ compatible with either $\gamma$ or $\gamma'$. Consider the
subcase that $\Ext\gamma$ is defined by size, so that
$|\Ext\gamma \cap V|>|\Int\gamma \cap V|$ and $|\Ext\gamma'
\cap V|\geq|\Int\gamma' \cap V|$. This implies that
$|\Ext\gamma \cap V|> |V|/2$ and $|\Ext\gamma' \cap V|\geq
|V|/2$ which in turn implies that
$\Ext\gamma\cap\Ext\gamma'\neq\emptyset$. The case where
$\Ext\gamma'$ is defined by size is strictly analogous, so we
are left with the subcase where both $\Ext\gamma$ and
$\Ext\gamma'$ are defined by containing the distinguished point
$x_0$.  Again, this implies that
$\Ext\gamma\cap\Ext\gamma'\neq\emptyset$.

Now we show that the condition
$\Ext\gamma\cap\Ext\gamma'\neq\emptyset$ rules out the case
\eqref{p-ord-case4}.  Let $u\in \Ext\gamma\cap\Ext\gamma'$, and
let $x\in\Int\gamma$.  Consider a path $\omega$ from $x$ to
$u$, and let $y$ be the last exist point from $\Int\gamma$
along $\omega$.  This implies $y\in\gamma$, and by our
assumption \eqref{p-ord-case4}, we therefore have $y\in
\Int\gamma'$.   But this implies there exists a point
$z\in\gamma'\subset\Int\gamma$ after $y$, which is a
contradiction.  Thus case \eqref{p-ord-case4} is impossible.

Next we prove that the three remaining cases imply that
$\Int\gamma'\cap\Int\gamma=\emptyset$,
$\Ext\gamma'\cap\Int\gamma=\emptyset$ and
$\Int\gamma'\cap\Ext\gamma=\emptyset$, respectively, showing
that either $\gamma\perp\gamma'$, $\gamma<\gamma'$ or
$\gamma'<\gamma$, respectively.

%three cases happens:
%\begin{align}
%\label{p-ord-case1a}
%\gamma\cup\Int\gamma\subset\Ext\gamma'
%\quad&\text{and}\quad
%\gamma'\cup\Int\gamma'\subset\Ext\gamma
%\\
%\label{p-ord-case2a}
%\gamma\cup\Int\gamma\subset\Int\gamma'
%\quad&\text{and}\quad
%\gamma'\cup\Ext\gamma'\subset\Ext\gamma
%\\
%\label{p-ord-case3a}
%\gamma\cup\Ext\gamma\subset\Ext\gamma'
%\quad&\text{and}\quad
%\gamma'\cup\Int\gamma'\subset\Int\gamma.
%\end{align}
We prove all three statements in one sweep, by setting
$A=\Ext\gamma$ and $A'=\Ext\gamma'$ in the first case,
$A=\Ext\gamma$ and $A'=\Int\gamma'$ in the second case, and
$A=\Int\gamma$ and $A'=\Ext\gamma'$ in the third case.  Our
assumption then reads $\gamma\subset A'$ and $\gamma'\subset
A$, and our claim is $B\cap B'=\emptyset$, where
$B=A^c\setminus\gamma$ and $B'=(A')^c\setminus\gamma'$. In a
preliminary step, we  prove that $\gamma\subset A'$ implies
$B\setminus B'\neq\emptyset$.  Indeed, assume the contrary,
i.e., $B\subset B'$.  Taking the closure on  both sides, this
gives $\gamma\cup B\subset \gamma'\cup B'$, and hence
$\gamma\subset \gamma'\cup B'=(A')^c$, a contradiction.    Now
we prove the main claim $B\cap B'=\emptyset$.  Assume the
contrary, that there exists an $x\in B\cap B'$. From our
preliminary claim, we also know that there exists a $y\in
B\setminus B'$.  Since $B$ is connected, we conclude that there
must be a path $\omega\subset B$ from $x$ to $y$.  Let $z$ be
the first time this path exits $B'$. Then $z\in B\cap\partial
B'=B\cap\gamma'$. Thus $B\cap\gamma'\neq\emptyset$, a
contradiction.

(ii) Both statements follow from the observation that if $A\cap
A' = \emptyset$, then $\dist(\partial A,
\partial A') \leq \dist (A,A')$.

(iii)  Assume that $\gamma < \gamma'$ and $\gamma' < \gamma''$.
Then $\Int\gamma \subset \Int\gamma' \subset \Int\gamma''$, so
we need only prove that $\gamma$ and $\gamma''$ are compatible,
i.e. that $\dist(\gamma,\gamma') \geq 1/2$. On the other hand,
$\Int\gamma\subset\Int\gamma'$.  Taking the closure of both
sides and using a trivial inclusion, this implies that
$\gamma\subset\gamma \cup \Int\gamma
\subset\gamma'\cup\Int\gamma'$.  Thus
$$
\dist(\gamma,\gamma'') \geq
\dist(\gamma'\cup\Int\gamma',\gamma''\cup\Ext\gamma'') =
\dist(\Int\gamma',\Ext\gamma'')\geq 1/2
$$
where we used (ii) in the last step.

(iv) This is proved strictly analogously to the proof of (iii).
\qed
\end{proof}

The next lemma is an easy corollary of
Lemma~\ref{lem:partial-order}.

\begin{lemma}
\label{lem:ext-connect} Let $\Gamma$ be a set of pairwise
compatible contours. Then $\Ext\Gamma$ is a connected subset of
$\bV$.
\end{lemma}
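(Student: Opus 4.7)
My plan is to induct on $|\Gamma|$, after first reducing to the case in which the contours in $\Gamma$ are mutually external (in the sense of Definition~\ref{def:Ext-gamma}). For the reduction, let $\Gamma_\ext\subset\Gamma$ be the set of external contours, i.e., those maximal under the partial order $<$, which is well defined by Lemma~\ref{lem:partial-order}(i),(iii). By Lemma~\ref{lem:partial-order}(i) any two members of $\Gamma_\ext$ are mutually external. For any $\gamma\in\Gamma\setminus\Gamma_\ext$, iterating the definition gives some $\gamma^\star\in\Gamma_\ext$ with $\gamma<\gamma^\star$; Lemma~\ref{lem:partial-order}(ii) then forces $\gamma\cup\Int\gamma\subset\Int\gamma^\star$, hence $\Ext\gamma^\star\subset\Ext\gamma$, and intersecting yields $\Ext\Gamma=\bigcap_{\gamma\in\Gamma_\ext}\Ext\gamma$. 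We may therefore assume $\Gamma=\Gamma_\ext$.

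For the induction, the cases $|\Gamma|=0$ (where $\Ext\Gamma=\bV$) and $|\Gamma|=1$ (Lemma~\ref{lem:int-ext-exist}(i)) are immediate. For the inductive step, pick any $\gamma\in\Gamma$ and set $\Gamma'=\Gamma\setminus\{\gamma\}$. Mutual externality together with Lemma~\ref{lem:partial-order}(ii) implies that $\overline{\Int\gamma}=\gamma\cup\Int\gamma$ is disjoint from every $\overline{\Int\gamma'}$ with $\gamma'\in\Gamma'$, so $\overline{\Int\gamma}\subset\Ext\Gamma'$. Using $\Ext\gamma=\bV\setminus\overline{\Int\gamma}$, I obtain
\[
\Ext\Gamma=\Ext\Gamma'\cap\Ext\gamma=\Ext\Gamma'\setminus\overline{\Int\gamma},
\]
so, given that the inductive hypothesis makes $\Ext\Gamma'$ open and connected, it suffices to show that excising $\overline{\Int\gamma}$ from $\Ext\Gamma'$ does not disconnect it.

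To finish, given $x,y\in\Ext\Gamma$ I would take any path $\omega$ from $x$ to $y$ in $\Ext\Gamma'$ that crosses $\gamma$ transversely at finitely many points, and then replace each maximal subarc of $\omega$ lying in $\overline{\Int\gamma}$ by a detour through a one-sided collar of $\gamma$ on the $\Ext\gamma$ side. Such a collar sits inside $\Ext\Gamma'$ since $\Ext\Gamma'$ is open and contains the compact set $\gamma$, it lies in $\Ext\Gamma'\cap\Ext\gamma=\Ext\Gamma$, and it is connected because $\gamma$ is. The main obstacle is making this tubular-neighborhood/transversality step rigorous in the polyhedral setting, but it should follow from standard PL regular-neighborhood theory together with the two-sidedness of $\gamma$ guaranteed by Lemma~\ref{lem:int-ext-exist}(i).
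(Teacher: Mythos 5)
Your proposal is correct and follows essentially the same route as the paper's proof: reduce to the set of external (hence mutually external) contours via the partial order of Lemma~\ref{lem:partial-order}, induct on the number of contours, and repair a path in $\Ext\Gamma'$ by detouring around the excised contour on its exterior side, using the connectedness of the contour. The only cosmetic difference is that the paper first reroutes the offending segment along $\gamma$ itself and then deforms it off $\gamma$, while you go directly through a one-sided collar; the collar/deformation step you flag is exactly the step the paper treats with the phrase ``by deforming $\omega'$,'' and your compactness/openness justification (together with compatibility, which keeps a $1/4$-neighborhood of $\gamma$ clear of the other contours and their interiors) is an adequate way to make it precise.
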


\begin{proof}
Let $\Gamma_\ext$ be the set of external contours in $\Gamma$.
It follows immediately from the definition of $\Ext\Gamma$ and
the last lemma that $\Ext\Gamma=\Ext\Gamma_\ext$. It is
therefore enough to consider a set $\Gamma$ of mutually
external contours.  We prove the statement by induction on the
number of contours in $\Gamma$.  The statement is trivial if
$\Gamma = \emptyset$.  Assume the statement is proved for
$\Gamma=\{\gamma_1,\dots,\gamma_{n-1}\}$.  Adding an additional
mutually external contour $\gamma_n$ will not change the
connectivity. Indeed, let $x,y\in\Ext(\Gamma\cup\{\gamma_n\})$,
and let $\omega$ be a path in $\Ext\Gamma$ that joins $x$ to
$y$. If $\omega$ does not intersect $\gamma_n\cup\Int\gamma_n$
there is nothing to prove.  Otherwise, let $x'\in\gamma_n$ be
the first entry point into $\gamma_n\cup\Int\gamma_n$, and let
$y'\in\gamma_n$ be the last exit point from
$\gamma_n\cup\Int\gamma_n$.  Since $\gamma_n$ is connected, we
can replace  the path $\omega$ from $x'$ to $y'$ by a path in
$\gamma_n$, leading to a path $\omega'$ joining $x$ and $y$ in
$\gamma_n\cup\Ext(\Gamma\cup\{\gamma_n\})$. By deforming
$\omega'$ this immediately leads to a path in
$\Ext(\Gamma\cup\{\gamma_n\})$, proving the lemma. \qed
\end{proof}

\smallskip

We are now ready to prove Lemma~\ref{lem:extA}.

\proofof{Lemma~\ref{lem:extA}}
(i) Since the contours corresponding to a configuration $A$ are
pairwise compatible, this statement follows immediately from
the previous lemma.

(ii) Using the fact that the boundary of $\bV(A)$ is equal to
the union over all contours in $\Gamma(A)$, we conclude that
$\Ext\Gamma(A)$ is  a connected subset of $\bV\setminus\partial
\bV(A)$. But this implies that $\Ext\Gamma(A)\subset \bV(A)$ or
$\Ext\Gamma(A)\subset \bV\setminus\bV(A)$, as claimed.

(iii) This follows immediately from (i) and the third statement
of Lemma~\ref{lem:simple}. \qed

\subsection{Isoperimetric Estimates}

We need the notion of  {\em diameter}, $\diam\gamma$, of a
contour $\gamma$. {To this end, we consider sets
${\mathbf S}_{k}^{(i)}$ of the form
$$
{\mathbf S}_{k}^{(i)}=\{x\in \bV\colon x_i=k\},
$$
and define $I_i=I_i(\gamma)$ as
$$
I_i=\{k\in\bbZ/L \bbZ\colon {\mathbf S}_{k}^{(i)}\cap\gamma\neq\emptyset\}.
$$
By the fact that $\gamma$ is a connected subset of $\bV$, we
have that $I_i$ is a set of consecutive integers mod $L$ (i.e.,
it is a connected subset of $\bbZ/L \bbZ$).
%
%
%$S_k^{(i)}$ of the form
%$$
%S_k^{(i)}=\{x\in V\mid x_i=k\}.
%$$
We define the diameter, $\diam_i\gamma$ of $\gamma$ in the
direction $i$ as the number of points in $I_i$, and the
%number of sets $S_k^{(i)}$, $k=1,\dots,L$, such that
%$S_k^{(i)}$ contains at least one edge $\{x,y\}\subset S_k^{(i)}$
%for which $\gamma$ intersects the line segment between $x$ and $y$.
diameter of $\gamma$ as
$\diam\gamma=\max_{i=1,\dots,d}\diam_i\gamma$.}

%It is defined as the side-length of the
%minimum cube $C\subset V$ such that $\Int\gamma \subset \{x \in
%\bV \colon \dist(x,C) \le 3/4\}$, where, as usual, a cube
%$C\subset V$ is considered a cube of sidelength $k$ if $C$
%contains $k^d$ points.
\begin{lemma}\label{lem:iso}
For every contour $\gamma$, we have
\begin{equation}
\label{g-larger-diam-g}
\|\gamma\|\geq{2\,}\diam\gamma\,
\end{equation}
and
\begin{equation}
\label{iso1}
|\Int\gamma\cap V|\leq \frac 12\|\gamma\| \, \diam \gamma\,.
\end{equation}
\end{lemma}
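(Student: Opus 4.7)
My plan is to prove both bounds by slicing $\gamma$ with axis-aligned hyperplanes and exploiting the winding-number-zero property of $\gamma$.

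For the first inequality, choose a direction $i$ achieving $\diam_i\gamma=\diam\gamma$. For each $k\in I_i$, the slice $\mathbf{S}_k^{(i)}\cap\gamma$ is a nonempty closed PL $(d-2)$-cycle in the $(d-1)$-dimensional torus $\mathbf{S}_k^{(i)}$. Since $[\gamma]=0$ in $H_{d-1}(\bV,\bbZ_2)$, writing $\gamma=\partial B$ gives slice$\,=\partial(B\cap\mathbf{S}_k^{(i)})$, so the slice is null-homologous in $\mathbf{S}_k^{(i)}$ and must contain at least two facets of $\gamma$: in $d=2$ the slice is a finite point set whose cardinality is even by the intersection-number formula in $H_1(\bV,\bbZ_2)$, while for $d\ge 3$ any non-empty closed PL $(d-2)$-cycle on the half-integer lattice requires at least two $(d-2)$-cubes to close up. Since a facet of $\gamma$ has $x_i$-extent of length $\le 1/2$ and therefore meets at most one integer hyperplane $\mathbf{S}_k^{(i)}$, summing the ``at least two facets per slice'' bound over $k\in I_i$ yields
\[
\|\gamma\|\ \ge\ \sum_{k\in I_i}\#\{\text{facets in }\mathbf{S}_k^{(i)}\cap\gamma\}\ \ge\ 2|I_i|\ =\ 2\diam\gamma.
\]

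For the second inequality, keep the same $i$ and slice $\bV$ into the $L^{d-1}$ axis-parallel lattice lines in direction $i$. For each line $\ell$ let $m_\ell=|\ell\cap\gamma|$, which is even by winding-number zero; when $m_\ell\ge 2$ the intersection points partition $\ell$ into arcs alternating between $\Int\gamma$ and $\Ext\gamma$, so $m_\ell/2$ arcs lie in $\Int\gamma$. Each such arc is contained in $\Int\gamma$ and therefore inside the $x_i$-extent of $\gamma$, so its $x_i$-length is at most $\diam_i\gamma$ and it contains at most $\diam_i\gamma$ integer points of $V$. Summing across lines and using $\sum_\ell m_\ell=\|\gamma\|_i$ (each facet perpendicular to $i$ is crossed by a unique unit edge in direction $i$) gives
\[
|\Int\gamma\cap V|\ \le\ \sum_\ell\tfrac{m_\ell}{2}\,\diam_i\gamma\ =\ \tfrac{\diam_i\gamma}{2}\,\|\gamma\|_i\ \le\ \tfrac 12\,\|\gamma\|\,\diam\gamma.
\]

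The main obstacle I expect is the possibility of a lattice line $\ell$ in direction $i$ which misses $\gamma$ entirely yet lies in $\Int\gamma$; such a wrapping line would contribute $L$ integer points to $|\Int\gamma\cap V|$ with no matching contribution on the right. I would address this by choosing $i$ so that $\Int\gamma$ is bounded in direction $i$, exploiting the defining condition on $\Ext\gamma$ (it contains any compatible interface---which always winds---or else is the larger of the two components of $\bV\setminus\gamma$) to force the arc of $\Int\gamma$ on every line to be the ``direct'' one of length $\le\diam_i\gamma$; in the residual case where $\Int\gamma$ wraps in every coordinate---which then forces $\diam\gamma=L$ and $|\Int\gamma|\le|\bV|/2$---I would pair \eqref{g-larger-diam-g} with a standard discrete isoperimetric lower bound on $\|\gamma\|$ in the torus to absorb the wrapping contribution directly.
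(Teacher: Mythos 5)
Your overall strategy (slicing by hyperplanes for \eqref{g-larger-diam-g}, slicing by lattice lines for \eqref{iso1}) is the same as the paper's, but both halves have genuine gaps. For \eqref{g-larger-diam-g}, the displayed chain $\|\gamma\|\geq\sum_{k\in I_i}\#\{\text{facets in }\mathbf S_k^{(i)}\cap\gamma\}$ is unjustified: in this paper $\|\gamma\|$ is \emph{not} the number of facets of $\gamma$ but the number of intersections of $\gamma$ with the original edge set $\bE$, and these differ (the smallest contour around a single edge has $4d-2$ edge-crossings but $6d-4$ facets), so the implicit premise ``$\|\gamma\|\geq$ number of facets'' is false; moreover, for $d\geq 3$ a facet lying in an integer hyperplane $\mathbf S_k^{(i)}$ need not cross any edge of $\bE$ at all (e.g.\ for the contour around a single edge in direction $3$, the side facets at half-integer height lie in the slice $x_1=0$ but meet no lattice edge). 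The correct repair is the paper's: for each $k\in I_i$, work with the edges of $E$ lying \emph{inside} the slice and use the winding-number-zero parity (exactly your $d=2$ intersection-number argument, run in every dimension) to get at least two crossings of $\gamma$ with in-slice edges, then sum over $k$; the facet-count shortcut does not substitute for this.

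For \eqref{iso1}, the crucial step ``each $\Int$-arc has $x_i$-length at most $\diam_i\gamma$'' is precisely the nontrivial containment $\Int\gamma\subset\bV_i$, which the paper proves \emph{only} when an interface compatible with $\gamma$ exists (using that all such interfaces lie in $\Ext\gamma$), and your sketch does not supply a proof in the remaining case. Your proposed fallback is not a complete case analysis: the claim that the residual case forces $\diam\gamma=L$ and $|\Int\gamma\cap V|\leq|V|/2$ is unproven (without a compatible interface one gets $|\Int\gamma\cap V|\leq L^d/2$ directly from the definition of $\Ext\gamma$, but only $\diam\gamma\geq L-1$, not $=L$, and nothing rules out intermediate situations where the slab containment fails while $\diam\gamma<L$). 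Even granting $|\Int\gamma\cap V|\leq L^d/2$, the Bollob\'as--Leader bound only gives $\|\gamma\|\geq 2|\Int\gamma\cap V|^{1-1/d}$, which yields \eqref{iso1} only when combined with the separate estimate $|\Int\gamma\cap V|\leq(\diam\gamma)^d$; the paper obtains this from the observation that $\gamma\subset\mathbf C=\bigcap_i\bV_i$ and $\bV\setminus\mathbf C$ is connected, so that either $\Int\gamma$ or $\Ext\gamma$ is contained in $\mathbf C$ (using $|\Int\gamma\cap V|\leq|\Ext\gamma\cap V|$ in the second case). This ingredient is absent from your proposal, and without it the no-compatible-interface case is not closed.
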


\begin{proof}
{%Let $\mathcal S^{(i)}$ be the collections of sets
Let $S_k^{(i)}={\mathbf S}_{k}^{(i)}\cap V$, and let
$E_k^{(i)}$ be the set of edges $xy\in E$ such that both $x$
and $y$ lie in $S_k^{(i)}$. Consider the configuration $A$
which has $\gamma$ as its only contour.  If $A\cap
E_k^{(i)}=E_k^{(i)}$, then ${\mathbf S}_{k}^{(i)}\subset
\bV(A)$ (this follows immediately from the fattening procedure
used to define $\bV(A)$, and if $A\cap E_k^{(i)}=\emptyset$,
then ${\mathbf S}_{k}^{(i)}\subset \bV\setminus\bV(A)$ (this
follows  from Lemma~\ref{lem:simple}).  In either case,
$\gamma$ must have an empty intersection with ${\mathbf
S}_{k}^{(i)}$. (In fact, $\gamma$ must have distance at least
$1/4$ from this set). For $k\in I_i$, the set $E_k^{(i)}$
therefore must contain at least one edge in $\delta A$,
implying that $\gamma$ has at least one intersection with the
edges in $E_k^{(i)}$.  In fact, by a simple parity argument,
there must be at least two such intersections. This immediately
implies that $\|\gamma\|\geq 2\, \diam_i(\gamma)$ for all
$i=1,\dots,d$, which proves the bound \eqref{g-larger-diam-g}.

Consider now the sets $V_i=\{x\in V\mid x_i\in I_i\}$ and
$$
\bV_i=\{x\in \bV\colon \exists k\in I_i
\text{ s.t. } |x_i-k|\leq 3/4\}.
$$
Since $I_i$ consists of consecutive integers mod $L$, the set
$V_i$ is a connected subset of the discrete torus $V$, and
$\bV_i$ is a connected subset of $\bV$. If
$\diam_i\gamma>0$, the set $\bV_i$ is non-empty, and
$\gamma\subset\bV_i$.  If $\diam_i\gamma=0$, all the edges
intersecting $\gamma$ must be in the direction $i$, and since
$\gamma$ is connected, they must all lie in one plane of the
torus.  In other words, there must be a set of the form $\{x\in
\bV\colon |x_i-k+1/2|\leq 1/4\}$ such that $\gamma$ is
contained in this set.  We denote it again by $\bV_i$\,.

While it is in general not true that $\Int\gamma\subset
\bV_i$\,, we claim that this is true if there exists an
interface $S$ that is compatible with $\gamma$.  Indeed, let
$S$ be such an interface, and let $S_1$ be one of the sets
${\mathbf S}_{k}^{(i)}\subset\bV\setminus \bV_i$ (if there is
no such set, $\bV_i=\bV$ and there is nothing to prove).
Repeating the proof of Lemma~\ref{lem:int-ext-exist}, we see
that $S_1$ and $S$ must lie in the same component of
$\bV\setminus \gamma$, which by Definition~\ref{def:Ext-gamma},
must be the exterior of $\gamma$.  This proves that
$\Ext\gamma\cap(\bV\setminus\bV_i)\neq\emptyset$. Taking into
account the connectedness of $\gamma$, this proves that
$\Int\gamma\subset\bV_i$.

For contours whose exterior is defined by the existence of a
compatible interface $S$, this immediately implies the bound
\eqref{iso1}.  Indeed, assume that there exists an interface
$S$ such that $S\subset \Ext\gamma$. Without loss of
generality, let us assume that the first component of the
winding vector ${\mathbf N}(S)$ is 1. This implies that every
line in the 1-direction intersects $S$ at least once. Hence,
for all $x \in W=\Int \gamma \cap V$, a line through $x$ in the
1-direction intersects $\gamma$ at least twice.  Since
$W\subset \bV_1$, this shows that
\[|\Int\gamma \cap V| \le \frac{1}{2}\|\gamma\|\, \diam_1\gamma\,,
\]
which implies \eqref{iso1}.

We are thus left with contours $\gamma$ for which
$|W|=|\Int\gamma\cap V|\leq L^2/2$. For these contours, the}
isoperimetric inequality of Bollob\'as-Leader \cite{iso}
implies that
\[ \|\gamma\|\geq |\partial_{\rm edge} W| \ge \min_{i=1,\ldots, d} 2i|W|^{1-1/i} L^{d/i - 1}
\ge 2\min_i |W|^{1-1/i} L^{d/i - 1} = 2|W|^{1-1/d}\,,\]
where we used the notation $\partial_{\rm edge} W$ for the
edge-boundary of $W$. {To complete the proof of
\eqref{iso1}, we will want to show that
\begin{equation}
\label{Vleqdiamd}
|W|=|\Int\gamma\cap V|\leq (\diam\gamma)^d\,.
\end{equation}
This bound is trivial if $\diam\gamma=L$, so let us assume that
$\diam\gamma<L$.  Let $\mathbf C=\bigcap_{i=1,\dots,d}\bV_i$.
Since $\gamma\subset\mathbf C$ and $\bV\setminus \mathbf C$ is
connected, we must have that either $\Int\gamma\subset\mathbf
C$ or $\Ext\gamma\subset\mathbf C$.  In the first case, the
bound \eqref{Vleqdiamd} is again trivial, and in the second
case we use that
\[|\Int\gamma\cap V|\leq |\Ext\gamma\cap V|
\leq |\mathbf C\cap V|\leq (\diam\gamma)^d\,,
\]
in completing the proof.}
\end{proof}

{In addition to the above lemma, we will also need bounds
on the number of contours and interfaces of a given size. More
precisely, we need the following lemma.

\begin{lemma}
\label{lem:cont-estimates}
There exists an absolute constant $C<\infty$, such that the following statements hold %for all $L\geq 2 d$.

i) Let $\gamma_0$ be a contour or an interface, and let $k\geq
2$.  Then the number of contours $\gamma$ such that
$\|\gamma\|=k$ and $\dist(\gamma,\gamma_0)<1/2$ is at most
$\|\gamma_0\| (Cd)^k$.

ii) Fix $k\geq L^{d-1}$.  Then the number of interfaces $S$ in
$\bV$ such that $\|S\|=k$ is at most $L(Cd)^k$.

iii) Fix a vertex $x\in V$.  Then the number of contours
$\gamma$ such that $\|\gamma\|=k$ and $x\in \Int\gamma$ is at
most $ (Cd)^k$.
\end{lemma}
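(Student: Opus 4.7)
All three bounds rest on a Peierls-type enumeration: fixing any facet $f\in F^*_{1/2}$, the number of connected subsets of facets of size $k$ containing $f$ is at most $(Cd)^k$ for an absolute constant $C$. This follows from a depth-first tree encoding, since each facet has exactly $2(d-1)$ boundary $(d-2)$-cells, each shared with only $3$ other facets, so the facet adjacency graph has maximum degree $6(d-1)=O(d)$. In each of the three parts, what remains is to bound the number of choices for an appropriate starting facet $f$ of $\gamma$ (or $S$).

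For (i), the hypothesis $\dist(\gamma,\gamma_0)<1/2$ forces $\gamma$ to contain a facet sharing a $(d-2)$-cell with one of $\gamma_0$'s $\|\gamma_0\|$ facets, giving at most $O(d)\|\gamma_0\|$ choices for $f$ and hence the bound $\|\gamma_0\|(Cd)^k$. For (ii), ${\bf N}(S)\neq 0$ picks out a direction $i\in\{1,\dots,d\}$ (one of $d$ choices) in which the winding is odd; the reference fundamental loop through the origin in direction $i$ then intersects $S$ in one of its $L$ candidate perpendicular facets, yielding $dL(Cd)^k\leq L(C'd)^k$.

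Part (iii), where the only available information is $x\in\Int\gamma$, is the main obstacle. We split on the size of $k$ relative to $L$. In the small regime, say $k\leq L/4$, the inequality \eqref{g-larger-diam-g} gives $\diam_i\gamma\leq k/2\leq L/8$ for each $i$, so the diameter box $\mathbf C_\gamma=\bigcap_i\bV_i$ satisfies $|\mathbf C_\gamma\cap V|<L^d/2$ (using $d\geq 2$). The key claim is that $\Int\gamma\subset\mathbf C_\gamma$. Indeed, $\bV\setminus\mathbf C_\gamma$ is connected and disjoint from $\gamma$, so it lies entirely in either $\Int\gamma$ or $\Ext\gamma$; assuming the former would force $|\Int\gamma\cap V|\geq L^d/2$, which contradicts the definition of $\Ext\gamma$ when no compatible interface exists (since then $\Ext\gamma$ is the larger side, or the $x_0$-side in the tie case), and is ruled out when a compatible interface exists by the containment $\Int\gamma\subset\bV_i\subset\mathbf C_\gamma$ already established inside the proof of Lemma~\ref{lem:iso}. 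Hence $x\in\Int\gamma\subset\mathbf C_\gamma$, and moving from $x$ in the positive $1$-direction one must cross $\gamma$ within distance $\diam_1\gamma\leq k/2$, leaving at most $k/2$ candidate starting facets and hence at most $(k/2)(Cd)^k$ small contours. In the complementary regime $k>L/4$, the bound $L<4k$ lets us use the crude estimate that the total number of contours of size $k$ is at most $dL^d(Cd)^k\leq d(4k)^d(Cd)^k$. In both regimes the polynomial-in-$k$ prefactor is absorbed into the exponential by slightly enlarging $C$, yielding the claimed $(Cd)^k$.
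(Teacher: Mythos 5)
Your rooted lattice-animal strategy is the same in spirit as the paper's, and your choices of root in (i) and (ii) essentially match it, but as written the argument does not prove the lemma as stated. The central problem is that you enumerate connected sets of \emph{facets} of size $k$, whereas $\|\gamma\|=k$ is by definition the number of intersections of $\gamma$ with the union of lattice edges $\bE$ (see the line after \eqref{FK3}), not the number of facets. These quantities are not comparable up to an absolute constant: on a flat portion of $\partial\bV(A)$ the facets have side $1/2$, so there are $2^{d-1}$ facets per edge crossing, and a contour with $\|\gamma\|=k$ can have of order $2^{d-1}k$ facets. Your enumeration therefore yields a bound of the form $(Cd)^{c2^{d}k}$ rather than $(Cd)^{k}$ with an absolute $C$. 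The paper avoids this by encoding $\gamma$ through the set $E_\gamma$ of crossed edges, which has cardinality exactly $k$ (edges in $\delta_2$ counted twice), and counting connected subsets in a bounded-degree neighborhood graph on $E_{L,d}$. For the same reason, in (i) ``$\gamma_0$'s $\|\gamma_0\|$ facets'' is not the facet count of $\gamma_0$; moreover $\dist(\gamma,\gamma_0)<1/2$ does not force a shared $(d-2)$-cell (two facets of the half-lattice complex can be at distance $1/4$, or meet only in a cell of dimension less than $d-2$), and the number of facets within distance $1/2$ of a fixed facet grows exponentially in $d$, not like $O(d)$.

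In (iii), your small-$k$ branch (confining $\Int\gamma$ to the diameter box via the definition of $\Ext\gamma$ and the containment $\Int\gamma\subset\bV_i$ from the proof of Lemma~\ref{lem:iso}) is sound and is a genuinely different, attractive idea; but the complementary branch fails to deliver the stated form: the prefactor $dL^d\le d(4k)^d$ is exponential in $d$ and cannot be absorbed into $(Cd)^k$ with an absolute constant (take $k\asymp d\asymp L$ with $d$ large, where $d(4k)^d\approx e^{d\log d}$ exceeds $\theta^{k}$ for any fixed $\theta$). The paper needs no dichotomy on $k$ versus $L$: it grows cubes $W_r$ around $x$, takes the largest $R$ with $W_R\subset\Int\gamma$, roots the animal at one of the $2dR^{d-1}$ edges joining $W_{R}$ to $W_{R+1}$, and bounds $R^{d-1}\le|W_R|\le\frac14\|\gamma\|^2$ using Lemma~\ref{lem:iso}, so the prefactor is only $O(dk^{3})$ and is absorbed with an absolute constant. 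If one were content with $d$-dependent constants your argument could be repaired, but for the lemma as stated (absolute $C$, exponent equal to $\|\gamma\|$) both the facet/crossing conflation and the crude branch of (iii) are genuine gaps.
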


\begin{proof}
i) Clearly, there is a 1-1 correspondence between contours
$\gamma$ and the set of edges $E_\gamma$ intersecting them
(taking an edge twice if $\gamma$ has two intersections with
it, so that $\|\gamma\|$ is equal to the number of edges in
$E_\gamma$), with an analogous statement holding for
interfaces. Defining a suitable neighborhood relation on the
edges in $E_{L,d}$, the set $E_\gamma$ is a connected subset in
a graph of maximal degree bounded by $Kd$ for some $K<\infty$,
and $E_\gamma\cup E_{\gamma_0}$ is connected if
$\dist(\gamma,\gamma_0)<1/2$. Using standard results on the
number of connected sets in a graph of given maximal degree, we
immediately obtain statement i).

ii)  Let $x_0$ be an arbitrary point in $V_{L,d}$, let $L_i$ be
the straight line through the $x_0$ in  $i$ direction, and let
$E_i$ be the set of edges whose line-segments lie in $L_i$.  If
$S$ has non-zero winding number in the direction $i$, then $S$
must intersect each line in direction $i$ at least once,
implying that $E_S$ must contain at least one edge in $E_i$.
Taking $S_0$ to be the union $E_1\cup\dots\cup E_d$, we see
that an arbitrary interface $S$ must contain at least one edge
in $S_0$.  Since $S_0$ contains $dL$ edges, the result ii) now
follows by the argument  used in the proof of i).

iii) Let $x=(x_1,\dots,x_d)$, and for $r=1,2,...,L$, let $W_r$
be the cube of all points $y\in V$ such that $-\frac r2<
y_i-x_i \leq \frac r2$ for all $i=1,\dots, d$. Choose $R$ to be
the largest $r$ such that $W_r\subset \Int\gamma$.  Then
$\gamma$ must intersect one of the $2dR^{d-1}$ edges joining
$W_{R}$ to $W_{R+1}$, implying that the number of contours in
question (corresponding to a fixed $R$) is at most
\[
2dR^{d-1} (Cd)^{k-1}
\leq 2d |W_{R}|  (Cd)^{k-1}\leq \frac d2 k^2 (Cd)^{k-1},
\]
where in the last step we used the previous lemma to bound
$|W_{R}|\leq \frac 14 \|\gamma\|^2=\frac{k^2}4$. To complete
the proof, we will have to sum over $R$.  But $R^d=|W_R|\leq
\frac{k^2}4$ implies that $R\leq k$; thus the summation over
$R$ gives another factor of $k$, leading to the bound
\[\frac d2 k^3 (Cd)^{k-1}
\leq (8Cd)^k
\]
for the number of contours $\gamma$ such that $\|\gamma\|=k$
and $x\in \Int\gamma$.  This proves iii).

\end{proof}
}

\subsection{Matching Contours and Interfaces}

In this section we will show that the partition function $Z$
can be written as a sum over sets of matching contours and
interfaces.  To this end, we establish a sequence of lemmas. We
start with the following lemma.

\begin{lemma}
\label{lem:cont-base-case} Let $\gamma$ be a contour.  Then
there exists a configuration $A$ such that $\cS(A)=\emptyset$
and $\Gamma(A)=\{\gamma\}$.
\end{lemma}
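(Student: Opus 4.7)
Since $\gamma$ is a contour, there exists by definition a configuration $A_0 \in \Omega$ such that $\gamma \in \Gamma(A_0)$; in particular, $\gamma$ is a closed orientable hypersurface in $\bV$ with $\bold N(\gamma) = 0$, and Lemma~\ref{lem:int-ext-exist}(i) shows that $\bV \setminus \gamma$ has exactly two components. Using the orientation of $\gamma$ inherited from $A_0$, I would label these as the ordered side $\bold C_\ord$ (the side facing into $\bV(A_0)$ locally near $\gamma$) and the disordered side $\bold C_\dis$. Note that each of $\overline{\bold C_\ord}$ and $\overline{\bold C_\dis}$ is a union of $1/2$-cubes centered at points of $V_{1/2}$, since $\gamma$ is a union of facets in $F^*_{1/2}$.

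The plan is to define
\[
A = \{e \in E : m_e \in \overline{\bold C_\ord}\},
\]
where $m_e$ is the midpoint of $e$, and then prove $\bV(A) = \overline{\bold C_\ord}$. From this identity, $\partial \bV(A) = \gamma$ follows, and since $\bold N(\gamma) = 0$ by assumption, $\gamma$ is the unique connected component of $\partial \bV(A)$ and is a contour (not an interface), yielding $\Gamma(A) = \{\gamma\}$ and $\cS(A) = \emptyset$.

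The core technical step is to verify, for each $p \in V_{1/2}$, that the $1/2$-cube centered at $p$ lies in $\bV(A)$ if and only if it lies in $\overline{\bold C_\ord}$. For $p$ an edge midpoint this is immediate from the definition of $A$. For $p$ a lattice vertex $x$, I would argue: if the cube at $x$ lies in $\overline{\bold C_\ord}$ then some adjacent edge midpoint cube must also lie in $\overline{\bold C_\ord}$---otherwise $\gamma$ would contain the full collection of facets surrounding the cube at $x$, in which case Lemma~\ref{lem:int-ext-exist}(i) would force $\bold C_\ord$ to equal this single cube, a configuration incompatible with the existence of any $A_0$ realizing $\gamma$ (since any $A_0$ with $\chi_x(A_0) = 1$ would, via the fattening procedure, force at least one adjacent edge midpoint cube into $\bV(A_0)$). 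Conversely, the direction that the cube at $x$ lies in $\overline{\bold C_\ord}$ whenever some adjacent edge midpoint cube does follows from an analogous orientation argument applied to the facet in $\gamma$ that would otherwise separate them, combined with Lemma~\ref{lem:simple}(i)--(ii). For $p$ the center of a higher-dimensional cell, I would proceed by induction on the cell dimension, using the same orientation-and-realizability argument to show that the cube at $p$ lies in $\overline{\bold C_\ord}$ if and only if all edge midpoints of the corresponding cell do, so that $\chi_p(A) = 1$ agrees with $p \in \overline{\bold C_\ord}$.

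The main obstacle is ensuring consistency between the midpoint-based definition of $A$ and the cell-occupancy rule governing $\chi_p(A)$ at higher-dimensional cell centers; this is where the hypothesis that $\gamma$ arises from some configuration $A_0$ is essential, as it restricts the allowable geometry of $\gamma$ enough, via Lemma~\ref{lem:simple}, to preclude the pathological partitions of $V_{1/2}$ that would obstruct the construction.
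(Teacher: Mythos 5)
Your proposal is correct in substance, and it in fact constructs the very same configuration as the paper does (the set of all edges whose midpoints lie on the closed ordered side of $\gamma$), but it reaches the key identity by a different route. The paper first restricts $A_0$ to the component $\mathbf C$ of $\bV(A_0)$ whose boundary contains $\gamma$ (Lemma~\ref{lem:simple}(iii)), observes that $\gamma$ is then the boundary of a component $\mathbf D_0$ of the disordered region (this $\mathbf D_0$ is exactly your $\bold C_\dis$), and gets $\bV(A)=\bV\setminus\mathbf D_0$, hence $\partial\bV(A)=\gamma$, in one stroke from Lemma~\ref{lem:simple}(iv)--(v), whose whole purpose is to say that the disordered region generated by the edges with midpoints in a component of a disordered region is that component. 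You instead verify $\bV(A)=\overline{\bold C_\ord}$ directly, cube by cube over $V_{1/2}$, with an induction on the dimension of the lattice cell of which $p$ is the center. That induction does go through: if the facet separating the $1/2$-cube at $p$ from the $1/2$-cube at the center $p'$ of one of its faces were in $\gamma$ with ordered side facing $p'$, then, since the ordered side of every facet of $\gamma$ faces $\bV(A_0)$, the face centered at $p'$ would be occupied in $A_0$; occupation of all these faces forces the cell centered at $p$ to be occupied, so the separating facet could not lie in $\partial\bV(A_0)$ --- the same orientation-and-realizability contradiction you use at lattice vertices. So no step of your outline fails; the trade-offs are that your cell-by-cell verification essentially re-proves, in this special case, the content of Lemma~\ref{lem:simple}(iv)--(v), which the paper cites precisely to avoid this bookkeeping, and that the higher-dimensional-cell step, which is where the real work of your argument sits, is only asserted rather than carried out. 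If you want the shorter path, note that $\bold C_\dis$ is a component of the disordered region of the configuration obtained by keeping only the edges of $A_0$ inside the component $\mathbf C$, and then quote Lemma~\ref{lem:simple}(iv)--(v) as the paper does.
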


\begin{proof}
By definition, there exists a configuration $A_1$ such that
$\gamma$ is one of the contours corresponding to $A_1$.  Thus
$\gamma$ is a connected component of $\partial \bV(A_1)$, and
hence a connected component of the boundary of one of the
components, $\mathbf C$, of $\bV(A_1)$.  Let $A_2$ be the set
of edges in $A_1$ whose endpoints both lie in $\mathbf C$. Then
$(V(A_2),A_2)$ is a component of $(V,A_1)$, and, by
Lemma~\ref{lem:simple} (iii), the set $\bV(A_2)$ is nothing but
the component $\mathbf C$ of $\bV(A_1)$.

Thus $\gamma$ is a component of $\partial\bV(A_2)$.  Consider
now the complement $\mathbf D=\bV\setminus \bV(A_2)$, and its
components $\mathbf D_0,\dots\mathbf D_k$. Then $\gamma$ is the
boundary of one of these components, say $\mathbf D_0$.  Let
$A_0$ be the set of edges whose midpoint lies in $\bV\setminus
\mathbf D_0$. By Lemma~\ref{lem:simple} (iv) and (v), we have
that $\mathbf D_0=\bV\setminus \bV(A_0)$, which in turn implies
that $\gamma=\partial\bV(A_0)$. \qed
\end{proof}

\begin{lemma}
\label{lem:int-base-case} Let $\cS$ be an interface network.
Then there exists a configuration $A$ such that $\cS(A)=\cS$
and $\Gamma(A)=\emptyset$.
\end{lemma}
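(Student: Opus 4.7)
The plan is to construct $A$ directly from $\cS$. Since interfaces wrap around the torus, we cannot mimic the strategy of Lemma~\ref{lem:cont-base-case} by restricting to a single connected component.

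By hypothesis, there exists $A_1$ with $\cS(A_1) = \cS$; this endows each interface $S \in \cS$ with an orientation, i.e., a designated ordered side and disordered side. I would label each connected component $\mathbf{C}$ of $\bV \setminus \bigcup_{S \in \cS} S$ by $\ord$ or $\dis$ according to which side of any adjacent interface faces $\mathbf{C}$. Consistency---that this label is independent of the choice of adjacent interface---follows from the fact that any inconsistency inside $\mathbf{C}$ would have to be mediated by a contour of $A_1$ in $\mathbf{C}$ whose interior effectively wraps around the torus, contradicting the winding-zero property of contours established in Definition~\ref{def:cont} and the analysis in Lemma~\ref{lem:NS1=NS2}. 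Let $\mathbf{V}_\ord$ and $\mathbf{V}_\dis$ denote the unions of the $\ord$- and $\dis$-labeled components, respectively.

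I would then define
\[ A = \{e \in E : \text{both endpoints of } e \text{ lie in } V \cap \mathbf{V}_\ord\},\]
equivalently the set of edges whose midpoints lie in $\mathbf{V}_\ord$. The claim is that $\cS(A) = \cS$ and $\Gamma(A) = \emptyset$. This reduces to verifying $\bV(A) = \overline{\mathbf{V}_\ord}$, or equivalently $\bV \setminus \bV(A) = \mathbf{V}_\dis$. To establish the latter, I would invoke Lemma~\ref{lem:simple}(iv), which characterizes $\bV \setminus \bV(A)$ as $\bV_\dis(E \setminus A)$---the union of the open $3/4$-neighborhood of $V \setminus V(A) = V \cap \mathbf{V}_\dis$ and the open $1/4$-neighborhood of the duals of $E \setminus A$---and then directly check that this union equals $\mathbf{V}_\dis$ by a case analysis on whether an edge has both endpoints in $\mathbf{V}_\dis$, both in $\mathbf{V}_\ord$, or one in each.

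Once $\bV(A) = \overline{\mathbf{V}_\ord}$ is established, we have $\partial \bV(A) = \bigcup_{S \in \cS} S$, and since each connected component of this boundary is an interface (having nonzero winding number by construction), $\cS(A) = \cS$ and $\Gamma(A) = \emptyset$, as required. The main obstacle is verifying the consistency of the labeling of components by tracing through the topological argument that prevents incompatible labels, together with the delicate verification of the identity $\bV \setminus \bV(A) = \mathbf{V}_\dis$ near interfaces, where edges straddling $\cS$ together with nearby disordered vertices must fatten to exactly fill $\mathbf{V}_\dis$ without encroaching on $\mathbf{V}_\ord$.
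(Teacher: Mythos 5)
Your construction is the natural fleshing-out of what the paper intends (its own proof is the one-line ``identical to Lemma~\ref{lem:cont-base-case}'', where the final configuration is defined through \emph{midpoints}: $A_0$ is the set of edges whose midpoint lies outside the chosen disordered component). But as written your definition of $A$ contains a genuine error: the two descriptions you declare equivalent are not, and the one you state first --- both endpoints in $V\cap\bV_\ord$ --- fails. Concretely, on $T_{L,2}$ let $A_1=E\setminus\{\{(k,0),(k,1)\}:k=0,\dots,L-1\}$. Then $\bV\setminus\bV(A_1)$ is the open slab $\{1/4<y<3/4\}$ winding around the torus, $\cS=\cS(A_1)$ consists of the two interfaces $\{y=1/4\}$ and $\{y=3/4\}$, $\Gamma(A_1)=\emptyset$, and \emph{every} vertex of $V$ lies in $\bV_\ord$. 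Your endpoint rule then gives $A=E$, so $\partial\bV(A)=\emptyset$ and $\cS(A)=\emptyset\neq\cS$. The point is that a unit edge can have both endpoints on the ordered side while its dual plaquette (hence its midpoint) is swallowed by a disordered tube or slab of width $1/2$; only the midpoint rule ($E\setminus A$ = edges with midpoint in $\bV_\dis$) is compatible with Lemma~\ref{lem:simple} (iv)--(v), and your closing ``case analysis on whether an edge has both endpoints in $\bV_\dis$, both in $\bV_\ord$, or one in each'' would inherit exactly this blind spot, since it omits the case of an edge crossing $\bigcup_{S\in\cS}S$ twice.

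The two steps you flag as delicate are indeed where the content lies, and your sketches of them are not yet arguments. For consistency of the labels, the issue is not a contour ``whose interior wraps around the torus'' (no contour has that property by definition); the correct reason is that, by Lemma~\ref{lem:int-ext-exist}(ii) and Definition~\ref{def:Ext-gamma}, every interface of $A_1$ lies in $\Ext\gamma$ for every contour $\gamma$ of $A_1$, so a path inside a component $\mathbf C$ of $\bV\setminus\bigcup_{S\in\cS}S$ joining two interface-adjacent points crosses each contour an even number of times, and the ord/dis label of the fine components of $A_1$ is therefore the same at both ends. For the identity $\bV\setminus\bV(A)=\bV_\dis$, note that you cannot quote Lemma~\ref{lem:simple}(v) verbatim: a component of $\bV_\dis$ is in general \emph{not} a component of $\bV\setminus\bV(A_1)$, since it may contain ordered islands of $A_1$ bounded by contours. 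Either rerun the (iv)--(v) argument for these coarser components, or bypass the direct verification entirely by removing the contours of $A_1$ one external contour at a time (running Lemma~\ref{lem:Adding-Cont} in reverse along the forest structure of Lemma~\ref{lem:partial-order}, using that $\dist(\Int\gamma,\Ext\gamma)\geq 1/2$ and that all interfaces lie in $\Ext\gamma$), which produces the desired $A$ by induction and is the cleanest way to make ``identical to the previous lemma'' precise.
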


\begin{proof}
The proof is identical to that of the previous lemma.
\end{proof}

Recall that each contour has an ordered and a disordered side.
We call $\gamma$ a contour with  external label {\em ord} if
the side facing $\Ext\gamma$ is ordered.  Otherwise it is
called a contour with external label {\em dis}, for disordered.
%For the next lemma,
%recall the notion of mutually external contours from
%Defintion~\ref{def:Ext-gamma} in Section~\ref{sec:Cont-Rep}.

\begin{lemma}
\label{lem:Adding-Cont} Let $A\subset\Om$.  If $\gamma$ is a
contour with $\dist(\bV(A),\Int\gamma)\geq 1/2$ and external
label $\dis$, or a contour with
$\dist(\bV\setminus\bV(A),\Int\gamma)\geq 1/2$ and external
label $\ord$, then there exists a configuration $A'\subset \Om$
such that $\Gamma(A')=\Gamma(A)\cup\{\gamma\}$ and
$S(A)=S(A')$.
\end{lemma}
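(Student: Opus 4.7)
By Lemma~\ref{lem:cont-base-case} there exists a configuration $A_\gamma$ with $\Gamma(A_\gamma)=\{\gamma\}$ and $\cS(A_\gamma)=\emptyset$. The plan is to combine $A_\gamma$ with $A$ in a way that depends on the external label of $\gamma$, treating the two cases separately.

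If $\gamma$ has external label $\dis$ then $\bV(A_\gamma)\subset \overline{\Int\gamma}$, so every edge of $A_\gamma$ has both endpoints in $\Int\gamma\cap V$. The hypothesis $\dist(\bV(A),\Int\gamma)\ge 1/2$ forces every edge of $A$ to have both endpoints in $\Ext\gamma\cap V$, since an endpoint in $\Int\gamma$ would put a vertex of $\bV(A)$ into $\Int\gamma$. I would then set $A'=A\cup A_\gamma$. The key observation is that a $k$-cube with corners on both sides of $\gamma$ must contain an edge crossing $\gamma$, and such an edge lies in neither $A$ nor $A_\gamma$; hence no such cube is occupied in $A'$. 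Consequently the occupied cubes of $A'$ are precisely the disjoint union of those of $A$ and those of $A_\gamma$, yielding $\bV(A')=\bV(A)\cup\bV(A_\gamma)$ and $\partial\bV(A')=\partial\bV(A)\sqcup\gamma$. Since $\gamma$ has winding vector zero, this gives $\Gamma(A')=\Gamma(A)\cup\{\gamma\}$ and $\cS(A')=\cS(A)$.

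If $\gamma$ has external label $\ord$, then $\bV(A_\gamma)\supset\overline{\Ext\gamma}$. Using Lemma~\ref{lem:simple}(iv), the hypothesis $\dist(\bV\setminus\bV(A),\Int\gamma)\ge 1/2$ forces every edge $e$ whose dual facet $e^\star$ lies within distance $<3/4$ of $\Int\gamma$ to belong to $A$; in particular, every edge with at least one endpoint in $\Int\gamma\cap V$ lies in $A$. Letting $E_{\rm in}$ denote this set of edges, I would set $A'=A\setminus E_{\rm in}$. Then a $k$-cube is occupied in $A'$ iff it is occupied in $A$ and all its corners lie in $\Ext\gamma$, so the modifications only affect $\bV$ in a neighborhood of $\gamma$. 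A cube-by-cube verification using Lemma~\ref{lem:simple} shows that $\bV(A')$ agrees with $\bV(A)$ away from $\gamma$, is disjoint from $\overline{\Int\gamma}$, and has $\gamma$ as a new boundary component, again giving $\Gamma(A')=\Gamma(A)\cup\{\gamma\}$ and $\cS(A')=\cS(A)$.

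The main obstacle is the geometric bookkeeping: one must verify that $\gamma$ emerges as a genuine new component of $\partial\bV(A')$ without creating or destroying any other boundary components of $\bV(A)$. In the first case this is clean because the edge supports of $A$ and $A_\gamma$ are cleanly separated by $\gamma$. In the second case, it relies on showing that the strip of $\Ext\gamma$ immediately adjacent to $\gamma$ remains inside $\bV(A')$, which follows from the distance hypothesis ensuring that enough cubes entirely inside $\Ext\gamma$ near $\gamma$ were already occupied in $A$ and survive the removal of $E_{\rm in}$.
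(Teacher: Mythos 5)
The disordered-label half of your argument is correct, and your instinct to glue in the reference configuration $A_\gamma$ supplied by Lemma~\ref{lem:cont-base-case} (rather than some vertex-based filling of $\Int\gamma$) is a good one: the separation argument showing that no cube of $A\cup A_\gamma$ can use edges from both $A$ and $A_\gamma$ does give $\bV(A')=\bV(A)\cup\bV(A_\gamma)=\bV(A)\cup\overline{\Int\gamma}$ and hence $\partial\bV(A')=\partial\bV(A)\sqcup\gamma$. (The paper instead adds the edges with both endpoints in $\Int\gamma$ and invokes Lemma~\ref{lem:simple}(iii); your version of this half is fine.)

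The ordered-label half, however, has a genuine gap: the set $E_{\rm in}$ you delete is the wrong set, under either reading of ``this set of edges.'' Take $A=E$ (so the hypothesis $\dist(\bV\setminus\bV(A),\Int\gamma)\geq 1/2$ holds vacuously) and let $\gamma$ be the boundary of the open $1/4$-neighborhood of a single dual hypercube $e_0^\star$, i.e.\ the unique contour of the configuration $E\setminus\{e_0\}$; it has external label $\ord$, and $\Int\gamma$ is the small disordered slab around $e_0^\star$, which contains \emph{no} vertex of $V$. If $E_{\rm in}$ is the set of edges with an endpoint in $\Int\gamma\cap V$, then $E_{\rm in}=\emptyset$, so $A'=A=E$ and $\gamma\notin\Gamma(A')$. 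If instead $E_{\rm in}$ is the set of edges whose dual facet lies within distance $<3/4$ of $\Int\gamma$, then $E_{\rm in}$ also contains the edges perpendicular to $e_0$ at its endpoints (their dual facets touch $\overline{\Int\gamma}$), and deleting them enlarges the disordered region beyond $\overline{\Int\gamma}$, so again $\gamma\notin\Gamma(A')$; your claims that $\bV(A')$ is disjoint from $\overline{\Int\gamma}$ and that a cube is occupied in $A'$ iff it is occupied in $A$ with all corners in $\Ext\gamma$ break down accordingly. The conceptual point you miss is that in this case the interior is carved out through dual facets, not through vertices, so no prescription phrased in terms of $\Int\gamma\cap V$ can work. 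The correct choice -- which is what the paper does -- is to delete exactly the edges whose \emph{midpoints} lie in $\Int\gamma$ (equivalently, $E_{\rm in}=E\setminus A_\gamma$ for your own reference configuration $A_\gamma$); then Lemma~\ref{lem:simple}(iv) and (v) identify $\bV\setminus\bV(A')$ as $(\bV\setminus\bV(A))\cup\Int\gamma$, and the distance hypothesis keeps the two pieces from interacting, giving $\Gamma(A')=\Gamma(A)\cup\{\gamma\}$ and $\cS(A')=\cS(A)$.
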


%\begin{lemma}
%Let $\Gamma$ be a set of mutually external contours
%which have all the same external label $\ell\in\{\ord,\dis\}$.
%Then there exists a configuration $A\subset \Om$ such
%that $\Gamma=\Gamma(A)$.
%\end{lemma}

\begin{proof}
Consider first the case that $\ell=\dis$.  Define $A_1$ to be
the set of edges with both endpoints in $\Int\gamma$.  By
Lemma~\ref{lem:simple} (iii), we have that $\bV(A\cup
A_1)=\bV(A)\cup \bV(A_1)=\bV(A)\cup\Int\gamma$, which shows
that $A\cup A_1$ is the desired configuration.

For $\ell=\ord$, we define $E_1$ to be the set of edges whose
midpoint lies in $\Int\gamma$.  Using Lemma~\ref{lem:simple}
(iv) and (v), we now conclude that $A\setminus E_1$ is the
desired configuration. \qed
\end{proof}

\begin{corollary}
\label{cor:match} Let $A\subset\Om$.  Then the contours and
interfaces corresponding to a configuration $A\in\Om$ are
matching.  Conversely, any set of matching contours and
interfaces corresponds to exactly one configuration $A\in\Om$.
\end{corollary}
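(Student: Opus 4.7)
\proof
The forward direction is immediate. Two distinct connected components of the half-integer sub-complex $\partial\bV(A)$ lie at distance at least $1/2$ from each other, giving pairwise compatibility of all elements of $\Gamma(A)\cup\cS(A)$. By Lemma~\ref{lem:simple}(ii), each component $\mathbf C$ of $\bV\setminus\partial\bV(A)$ lies entirely in $\bV(A)$ or entirely in $\bV\setminus\bV(A)$; assigning $\ell(\mathbf C)=\ord$ in the former case and $\ell(\mathbf C)=\dis$ in the latter immediately satisfies the matching condition, since by definition the ordered side of each contour or interface is the side that faces $\bV(A)$.

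For the converse, I argue by induction on $|\Gamma|$, processing contours from outermost to innermost.  By Lemma~\ref{lem:partial-order}(i),(iii), the nesting relation $<$ on $\Gamma$ is a partial order whose Hasse diagram is a forest, so I may fix an enumeration $\gamma_1,\dots,\gamma_k$ of $\Gamma$ in reverse topological order, i.e., with $\gamma_i<\gamma_j$ implying $j<i$. In the base case $\Gamma=\emptyset$, Lemma~\ref{lem:int-base-case} yields a configuration $A_0$ with $\cS(A_0)=\cS$ and $\Gamma(A_0)=\emptyset$; if the resulting label of a component of $\bV\setminus\bigcup_{S\in\cS}S$ disagrees with the prescribed matching, I toggle that component by adding or deleting all edges internal to it, which flips its label while preserving $\cS$ (the argument is identical to that of Lemma~\ref{lem:Adding-Cont}). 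In the inductive step, I define $A_i$ from $A_{i-1}$ via Lemma~\ref{lem:Adding-Cont} applied to $\gamma_i$. To verify the hypothesis of that lemma: since every $\gamma_j$ strictly containing $\Int\gamma_i$ has already been processed while no $\gamma_j$ inside $\Int\gamma_i$ has, the region $\Int\gamma_i$ lies entirely within one component of $\bV\setminus\partial\bV(A_{i-1})$, whose label coincides with the $\gamma_i$-external label prescribed by the matching (this label is inherited from the nearest processed ancestor contour, or from an interface of $\cS$, or from the base choice made above). Moreover, any path from $\Int\gamma_i$ to $\bV(A_{i-1})$ or to its complement (whichever is relevant) must cross $\gamma_i$, and since $\gamma_i$ is compatible with every already-processed contour and interface, this yields $\dist(\Int\gamma_i,\partial\bV(A_{i-1}))\ge 1/2$.

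For uniqueness, note that the matching data alone forces $\bV(A)$ to equal the union of closed half-integer cubes of side $1/2$ whose centers in $V_{1/2}$ lie in $\ord$-labeled components, so by Lemma~\ref{lem:simple}(iv) the configuration $A=\{e\in E:e^\star\subset\bV(A)\}$ is determined. The main obstacle I foresee is the bookkeeping in the inductive step: one must argue, for each $i$, that $\Int\gamma_i$ is uniformly labeled in $A_{i-1}$ with exactly the $\gamma_i$-external label and at distance at least $1/2$ from $\partial\bV(A_{i-1})$. This relies critically on the forest structure of the partial order supplied by Lemma~\ref{lem:partial-order} and on the interior/exterior separation estimates of Lemma~\ref{lem:partial-order}(ii), together with the fact that siblings and cousins of $\gamma_i$ (which may be processed before $\gamma_i$ in our enumeration) do not perturb the label inside $\Int\gamma_i$ because their interiors are disjoint from $\Int\gamma_i$.
\qed
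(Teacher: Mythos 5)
Your argument is correct and takes essentially the same route as the paper's: the forward direction is immediate, and the converse is an induction along the forest structure supplied by Lemma~\ref{lem:partial-order}, starting from the interface network (or the trivial configurations $A=\emptyset$, $A=E$) via Lemma~\ref{lem:int-base-case} and adding contours outermost-first via Lemma~\ref{lem:Adding-Cont}, after checking that compatibility with all previously placed surfaces gives $\dist(\Int\gamma_i,\partial\bV(A_{i-1}))\ge 1/2$. The only blemish is in the (routine) uniqueness step: the inversion formula $A=\{e\in E\colon e^\star\subset \bV(A)\}$ is not quite correct (for the configuration consisting of a single edge $e$, the unit facet $e^\star$ is not contained in the $1/4$-fattened set $\bV(A)$); one should instead use that $e\in A$ if and only if the midpoint of $\mathbf e$ lies in $\bV(A)$, which still shows that the matching data determine $\bV(A)$ and hence $A$.
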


\begin{proof}
The first statement is obvious.  The second follows from
Lemmas~\ref{lem:int-base-case} and the previous lemma by
induction on the number of contours. Indeed, by
Lemma~\ref{lem:partial-order}, the partial order on contours
leads to a forest on any set
$\Gamma=\{\gamma_1,\dots,\gamma_n\}$ of pairwise compatible
contours, in such a way that $\gamma'<\gamma$ whenever
$\gamma'$ is a child of $\gamma$.  Adding the interface network
as a common root, we may proceed by induction from this root to
add contours in such a way that the new contour added always
obeys the condition from the previous lemma. \qed
\end{proof}
\medskip

Note that this corollary, together with the representation
\eqref{FK3} for the weights of a configuration $A$, established
the representation \eqref{Z-cont-rep} for the partition
function $Z$.

\section{Key Ingredients for the Lower Bound}
\label{sec:PS-largedev}

 As explained in Section~\ref{sec:proof-strat},
we will prove our lower bound on the mixing time by proving an
upper bound on the conductance. This in turn will require both
a bound on the probability of the set of configurations with at
least one interface, and a large deviation bound on
configurations for which the joint exterior of all contours
contains less than $(1-\alpha) L^d$ points in $V$, see
Lemmas~\ref{lem:Con-sum}  and \ref{mainlem} below.
 We will start
with the decomposition of the partition function
 into terms {\em with} and {\em without} interfaces.

\subsection{Decomposition of the Partition Function}
\label{decomp}

Let
 \begin{equation}
\Omega_\tun =\{A\in\Omega\mid \cS(A)\neq \emptyset\},
\end{equation}
\begin{equation}
\Omega_\ord =\{A\in\Omega\setminus \Omega_\tun
\mid  \Ext\Gamma(A)\subset \bV(A) \}
\end{equation}
and
\begin{equation}
\Omega_\dis =\{A\in\Omega\setminus\Omega_\tun
\mid \Ext\Gamma(A)\subset \bV\setminus\bV(A) \}.
\end{equation}
By Lemma~\ref{lem:extA} (ii),
$\Omega=\Omega_\ord\cup\Omega_\dis\cup\Omega_\tun$. As a
consequence, the partition function {$Z=\sum_{A\in\Omega}
w(A)$ (see (\ref{Z-cont-rep})) can be decomposed as}
\begin{equation}
\label{Z=o+d+tunnel}
Z=Z_\dis +qZ_\ord +Z_\tun,
\end{equation}
where
\begin{equation}
\label{Zodt-def}
Z_\ord=\frac 1q \sum_{A\in\Omega_\ord} w(A),
\quad
Z_\dis=\sum_{A\in\Omega_\dis} w(A)
%,\end{equation}
\quad\text{and}\quad
%\begin{equation}
Z_\tun=\sum_{A\in\Omega_\tun} w(A).
\end{equation}
Note the extra factor of $q$ in \eqref{Z=o+d+tunnel}, which
accounts for the fact that there are $q$ different ordered
phases.

The results of this section are summarized in the next two
lemmas. The first is a finite-size scaling bound analogous to
those proved in \cite{BI,BK,BKM}.  The second is a large
deviations bound.

\begin{lemma}
\label{lem:Con-sum} For all $d\geq 2$, there are constants
$c>0$, $q_0<\infty$ and $L_0<\infty$ such that the following
statements hold for $q\geq q_0$ and $L\geq L_0$:

\noindent {\bf (a)}   If $\beta\geq\beta_0$, then
\begin{equation}
\label{Om-Tun-bd}
\nu(\Omega_\tun )\leq e^{-c\beta L^{d-1}}
\end{equation}
and
\begin{equation}
\label{Om-ord-low-bd}
\nu(\Om_\ord)\geq \frac q{q+1} -e^{-c\beta L}.
\end{equation}

\noindent {\bf (b)} If $\beta=\beta_0$, then
\begin{equation}
\label{Om-ord-equ-bd}
\Bigl|\nu(\Om_\ord) -\frac q{q+1} \Bigr| \leq e^{-c\beta L}.
\end{equation}
\end{lemma}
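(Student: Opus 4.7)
The plan is to use the decomposition $Z=qZ_\ord+Z_\dis+Z_\tun$ from \eqref{Z=o+d+tunnel} together with three ingredients to be supplied by the Pirogov--Sinai analysis in the appendix: (i) metastable free energies $f_\ord(\beta)$ and $f_\dis(\beta)$ with $f_\ord(\beta_0)=f_\dis(\beta_0)$ and $f_\dis-f_\ord\geq c_1\beta$ for $\beta\geq\beta_0$ once $q$ is large; (ii) convergent cluster expansions giving $Z_\ord=e^{-f_\ord L^d}(1+O(e^{-c\beta L}))$ and similarly for $Z_\dis$, the $O(e^{-c\beta L})$ remainder coming from clusters long enough to wrap around the torus, and hence suppressed by the Peierls weight $e^{-\kappa\|\cdot\|}$; (iii) a Peierls-type bound $|w(\gamma)|\leq e^{-\kappa\|\gamma\|}$ per contour, with $\kappa=\beta/2+O(e^{-\beta})$.

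I would first prove \eqref{Om-Tun-bd}. Writing $Z_\tun$ as a sum over nonempty interface networks $\cS$ using \eqref{Z-cont-rep}, each such $\cS$ contributes a factor $\prod_{S\in\cS}e^{-\kappa\|S\|}$ multiplied by a residual partition function over matching contour configurations on the components of $\bV\setminus\bigcup_{S\in\cS}S$, each component carrying a forced ordered/disordered label. By ingredient (ii) this residual piece is bounded by the product of ordered/disordered partition functions on the sub-volumes, which after dividing by $Z\geq\max\{qZ_\ord,Z_\dis\}$ contributes at most a factor $e^{O(e^{-c\beta L})}$. Since any interface has non-vanishing winding number, $\|\cS\|\geq L^{d-1}$, and by Lemma~\ref{lem:cont-estimates}(ii) there are at most $L(Cd)^k$ interface networks of total size $k$. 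Thus
\[
\nu(\Omega_\tun)\leq e^{O(e^{-c\beta L})}\sum_{k\geq L^{d-1}}L(Cd)^k e^{-\kappa k},
\]
and for $\beta$ large enough that $\kappa>2\log(Cd)$ the geometric series is bounded by $e^{-c\beta L^{d-1}}$, establishing \eqref{Om-Tun-bd}.

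For \eqref{Om-ord-equ-bd}, ingredient (ii) at $\beta=\beta_0$ yields $Z_\dis/Z_\ord=1+O(e^{-c\beta L})$, and combining with $Z_\tun/Z_\ord=O(e^{-c\beta L^{d-1}})$ from the previous step gives
\[
\nu(\Omega_\ord)=\frac{qZ_\ord}{qZ_\ord+Z_\dis+Z_\tun}=\frac{q}{q+1+O(e^{-c\beta L})}=\frac{q}{q+1}+O(e^{-c\beta L}),
\]
which is \eqref{Om-ord-equ-bd}. The bound \eqref{Om-ord-low-bd} then follows at $\beta=\beta_0$ directly, while for $\beta>\beta_0$ ingredient (i) gives $Z_\dis/Z_\ord\leq e^{-c_1\beta L^d}$, so $\nu(\Omega_\ord)$ is within $e^{-c\beta L^{d-1}}$ of $1$, which is well above $q/(q+1)$.

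The principal obstacle is proving the Pirogov--Sinai inputs (i)--(iii) on the torus rather than on $\bbR^d$: standard Pirogov--Sinai theory relies on contours that can be embedded in Euclidean space, but the contours here are defined topologically (zero $\bbZ_2$-winding number) and may form long tubes that wind partially around $T_{L,d}$. The saving feature, as emphasized in Section~\ref{sec:proof-strat}, is that the partial order of Lemma~\ref{lem:partial-order} makes the Hasse diagram of any pairwise compatible family of contours a forest, which is the only structural property of $\bbR^d$-contours used in the inductive construction of truncated weights and the subsequent cluster expansion. Combined with the isoperimetric estimates of Lemma~\ref{lem:iso} and the entropy bounds of Lemma~\ref{lem:cont-estimates}, this is enough to run the Peierls-Kotecky-Preiss convergence argument and deliver ingredients (i)--(iii), with the $e^{-c\beta L}$ finite-size error reflecting the minimal length of a cluster needed to witness the torus topology.
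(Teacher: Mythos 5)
Your overall architecture (the decomposition $Z=Z_\dis+qZ_\ord+Z_\tun$, Peierls/cluster-expansion control of the phase partition functions, and an entropy-times-Peierls sum over interface networks) is the same as the paper's, but two of the ``ingredients'' you take as inputs are false as stated, and your derivation of \eqref{Om-ord-low-bd} leans on them. Your ingredient (i) asserts $f_\dis-f_\ord\geq c_1\beta$ for all $\beta\geq\beta_0$, which directly contradicts $f_\ord(\beta_0)=f_\dis(\beta_0)$ and the continuity of the free energies: as $\beta\downarrow\beta_0$ the gap $a_\dis=f_\dis-f_\ord$ tends to $0$, and in a window $\beta-\beta_0=O(L^{-d})$ one has $Z_\dis/Z_\ord$ of order one, so your conclusion that ``$\nu(\Om_\ord)$ is within $e^{-c\beta L^{d-1}}$ of $1$'' for $\beta>\beta_0$ simply fails there (and indeed cannot be true, since $\nu(\Om_\ord)\approx q/(q+1)$ at $\beta_0$). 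The paper never needs any quantitative lower bound on $a_\dis$: it uses only Lemma~\ref{lem:PS}(iii) ($f=f_\ord$ for $\beta\geq\beta_0$) together with the one-sided upper bound \eqref{PS-up1} on $Z_\dis$ (bounding the maximum over $\Gamma_\ext$ crudely by $1$), which gives $Z_\dis\leq e^{-fL^d}(1+e^{-c\beta L/2})$ uniformly in $\beta\geq\beta_0$, and the lower bound \eqref{PS-lb1} for $Z_\ord$; this yields $\nu(\Om_\ord)\geq q/(q+1)-e^{-c\beta L}$ without ever comparing $Z_\dis$ to $Z_\ord$. Relatedly, your ingredient (ii) in the form $Z_\dis=e^{-f_\dis L^d}\bigl(1+O(e^{-c\beta L})\bigr)$ is false when the disordered phase is unstable ($a_\dis L$ large): the unstable-phase partition function is not governed by its metastable free energy with exponentially small error (large contours flipping most of the volume to the stable phase contribute at the scale $e^{-fL^d}$ up to surface terms), which is precisely why the paper works with the asymmetric pair \eqref{PS-lb1}/\eqref{PS-up1} rather than two-sided asymptotics. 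At $\beta=\beta_0$ your argument for part (b) is fine, since there both phases are stable and $f_\ord=f_\dis=f$.

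In the tunneling bound your bookkeeping is also too optimistic: after inserting the sub-volume partition functions into \eqref{Z-tun-alt}, the errors are not a global factor $e^{O(e^{-c\beta L})}$ but per-component surface corrections $e^{O(\|\partial\La\|)}$ (cf.\ \eqref{PS-up1} or \eqref{Z-cluster-bd}) together with a factor $q$ for each ordered component; since each interface bounds exactly one ordered and one disordered component, these amount to a factor of order $e^{(4+2d)\|S\|+d}$ per interface and must be absorbed into the Peierls weight $e^{-\kappa\|S\|}$, which works only because $\kappa\sim\beta/2$ is large. Also, Lemma~\ref{lem:cont-estimates}(ii) counts single interfaces, not interface networks, so one must sum over the number $n\geq2$ of interfaces (each of size at least $L^{d-1}$) as the paper does. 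These are repairable bookkeeping points, but the reliance on the false free-energy gap $a_\dis\geq c_1\beta$ is a genuine gap in your proof of \eqref{Om-ord-low-bd}.
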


To state the next lemma, we define
\begin{equation}
\label{Omega_alpha}
\begin{aligned}
\Om_\ord^{(\alpha)} &
=\{A\in \Om_\ord: \,|\Ext\Gamma(A) \cap V|\geq (1-\alpha)L^d\},
\\
\Om_{\dis}^{(\alpha)}
&=\{A\in \Om_{\dis}: \,|\Ext\Gamma(A) \cap V|\geq (1-\alpha)L^d\}.
\end{aligned}
\end{equation}

\begin{lemma}
\label{mainlem} Let $d\geq 2$ and $0<\alpha<1$.  Then there are
constants $c=c(\alpha)>0$ and $q_0=q_0(\alpha)$ such that for
$q\geq q_0$ and $\beta\geq\beta_0$ we have
\begin{equation}
\label{mainlem-a}
\nu\bigl(\Om_\ord\setminus\Om_\ord^{(\alpha)}\bigr)
\leq e^{-c\beta L^{d-1}}
\end{equation}
and
\begin{equation}
\label{mainlem-b}
\nu\bigl(\Om_\dis\setminus\Om_\dis^{(\alpha)}\bigr)
\leq e^{-c\beta L^{d-1}}.
\end{equation}
\end{lemma}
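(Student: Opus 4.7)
My plan is to prove \eqref{mainlem-a} by a Chernoff-style exponential moment bound with a tilt supplied by an artificial magnetic field in the contour representation; the bound \eqref{mainlem-b} will follow by the symmetric argument with ordered and disordered phases interchanged. Writing $V_\mathrm{int}(A) = V \setminus \Ext\Gamma(A)$, the event $A \in \Om_\ord \setminus \Om_\ord^{(\alpha)}$ is exactly $|V_\mathrm{int}(A)| > \alpha L^d$, so for any $h > 0$ I get
\[
\nu\bigl(\Om_\ord \setminus \Om_\ord^{(\alpha)}\bigr)
\le e^{-h\alpha L^d}\, \frac{1}{Z} \sum_{A \in \Om_\ord} w(A)\,e^{h|V_\mathrm{int}(A)|}.
\]
The task then reduces to bounding the tilted partition function in the numerator by a controlled multiple of $Z$.

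To analyze the tilted sum, I would invoke Corollary~\ref{cor:match} to decompose each $A \in \Om_\ord$ according to its set of external contours $\Gamma_\mathrm{ext}$ (all carrying external label \emph{ord}) together with independent configurations in the interiors $\Int\gamma$, $\gamma \in \Gamma_\mathrm{ext}$. The tilt factorizes across this decomposition: summing over inner configurations in $\Int\gamma$ with the factor $e^{h|(\Int\gamma\cup\gamma)\cap V|}$ produces exactly the partition function of a Potts model on $\Int\gamma$ in which the disordered energy density $e_\dis$ from \eqref{e-dis-def} has been shifted to $e_\dis - h$. This is an ``artificial magnetic field'' favoring the disordered phase, of precisely the kind that classical Pirogov-Sinai theory is built to handle. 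The appendix will provide the key input: for $\beta \ge \beta_0$ and $q$ sufficiently large, the ordered phase is stable with a strictly positive Peierls gap, so for $h$ a sufficiently small positive constant (chosen depending on $\alpha$ and the gap), the modified cluster expansion still converges and
\[
\sum_{A \in \Om_\ord} w(A)\, e^{h|V_\mathrm{int}(A)|}
\le e^{\Delta f^{(h)} L^d + O(L^{d-1})}\, Z,
\]
where $\Delta f^{(h)}$ is the shift in the effective ordered free energy, satisfying $\Delta f^{(h)} \to 0$ as $h \to 0$. Choosing $h = c_0\beta_0$ with $c_0 = c_0(\alpha) > 0$ small enough that $\Delta f^{(h)} \le h\alpha/2$ then yields
\[
\nu\bigl(\Om_\ord \setminus \Om_\ord^{(\alpha)}\bigr)
\le e^{-h\alpha L^d/2 + O(L^{d-1})}
\le e^{-c\beta L^{d-1}}
\]
for a suitable $c = c(\alpha) > 0$, which is the claim.

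The main obstacle will be verifying that the Pirogov-Sinai machinery underlying the tilted-partition-function estimate goes through on the torus with the magnetic-field modification in place. Two points must be checked: first, the modified Peierls condition $\kappa\|\gamma\| - h|\Int\gamma\cap V| \ge c'\|\gamma\|$ must hold uniformly over contours; combining the isoperimetric estimate $|\Int\gamma \cap V| \le \frac 12 \|\gamma\|\,\diam\gamma \le (\|\gamma\|/2)^{d/(d-1)}$ implicit in Lemma~\ref{lem:iso} with $\kappa \ge \beta/2 + O(e^{-\beta})$ shows that any sufficiently small $h$ is fine once $q$ is large. Second, because of the nontrivial topology of the torus one cannot invoke the classical PS recursion as a black box; instead one must use the partial order of Lemma~\ref{lem:partial-order} and the connectedness of $\Ext\Gamma$ established in Lemma~\ref{lem:ext-connect} to run the induction over contours, with interface-free ``interior'' partition functions always well-defined on simply-connected subregions. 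Carrying out these steps with the tilt included is the technical work that the appendix will perform in support of Lemma~\ref{lem:Con-sum} and the present lemma.
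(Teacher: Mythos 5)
Your overall strategy (a Chernoff tilt by an artificial field coupled to the non-exterior volume, controlled by Pirogov--Sinai input) is indeed the paper's strategy, but the specific way you implement it has a genuine gap: you cannot take $h$ to be a constant such as $c_0\beta_0$. At $\beta=\beta_0$ the tilted free-energy shift is \emph{not} $o(h)$: a single droplet of the disordered phase occupying a fraction $1-\eps$ of the volume has $K_\ord(\gamma)\approx e^{-\kappa\|\gamma\|}$ (since $f_\ord=f_\dis$ there, the bulk cost vanishes) and contributes $e^{h(1-\eps)L^d-O(\beta L^{d-1})}$ to the tilted sum, so $\Delta f^{(h)}\approx h$ and the requirement $\Delta f^{(h)}\le h\alpha/2$ is unattainable for $\alpha<1$. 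Equivalently, your intermediate conclusion $\nu(\Om_\ord\setminus\Om_\ord^{(\alpha)})\le e^{-h\alpha L^d/2+O(L^{d-1})}$ with fixed $h>0$ is false at $\beta_0$: the same droplet configurations show $\nu(\Om_\ord\setminus\Om_\ord^{(\alpha)})\ge e^{-C\beta L^{d-1}}$, so no bound decaying like $e^{-cL^d}$ can hold, and the lemma's $e^{-c\beta L^{d-1}}$ rate is essentially sharp. The ``modified Peierls condition'' you invoke also fails for constant $h$, since $|\Int\gamma\cap V|$ is only bounded by quantities superlinear in $\|\gamma\|$ (e.g.\ $\tfrac12\|\gamma\|\,\diam\gamma$ with $\diam\gamma$ up to $L$, or $(\|\gamma\|/2)^{d/(d-1)}$), so $h|\Int\gamma\cap V|$ overwhelms $\kappa\|\gamma\|$ for large contours. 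The paper's fix is to let the field shrink with the volume: it sets $K_h(\gamma)=e^{h|\Int\gamma\cap V|}K_\ord(\gamma)$, bounds $\frac{d}{dh}\log Z^{(h)}_\ord$ by a Peierls sum using \eqref{K-bd}, \eqref{g-larger-diam-g} and \eqref{iso1} (valid precisely because $h\le c\beta/(2L)$ makes $e^{h\|\gamma\|L}\le e^{c\beta\|\gamma\|/2}$), integrates to get $Z^{(h)}_\ord\le Z_\ord e^{\alpha hL^d/4}$, and then chooses $h=c\beta/(2L)$, which turns $e^{-\alpha hL^d}$ into exactly the claimed $e^{-c'\alpha\beta L^{d-1}}$.

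There is a second gap in your treatment of \eqref{mainlem-b}: the argument is \emph{not} symmetric under exchanging ordered and disordered for $\beta>\beta_0$. The Peierls bound \eqref{K-bd} for a contour with external label $\dis$ requires $a_\dis\,\diam\gamma\le c\beta$, and for $\beta$ well above $\beta_0$ one has $a_\dis>0$, so large-diameter $\dis$-contours carry weights $K_\dis(\gamma)=e^{-\kappa\|\gamma\|}qZ_\ord(\Int\gamma)/Z_\dis(\Int\gamma)$ that grow like $e^{a_\dis|\Int\gamma\cap V|}$ and are not suppressed at all; tilting further in favor of large (ordered) interiors only makes this worse. The paper handles this by a case split: if $a_\dis L\le c\beta$ the tilting argument goes through verbatim, while if $a_\dis L> c\beta$ it abandons tilting and bounds $\nu(\Om_\dis\setminus\Om_\dis^{(\alpha)})\le\nu(\Om_\dis)\le Z_\dis/Z_\ord$ directly via \eqref{PS-lb1}, \eqref{PS-up1} and \eqref{iso1}, obtaining $e^{2L^d\eps_L}e^{-\frac{c\beta}{2L}L^d}$, which is again exponentially small in $\beta L^{d-1}$. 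Your proposal needs both repairs (the $L$-dependent field and the $a_\dis$ dichotomy) to become a proof.
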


%\section{Proofs of Lemmas~\ref{lem:Con-sum} and \ref{mainlem}}
%\label{sec:Main-Proofs}

In order  to prove Lemma~\ref{lem:Con-sum}, we will need upper
bounds on $Z_\tun$ and $Z_\dis$, as well as upper and lower
bounds on $Z_\ord$. Since contours and interfaces are
suppressed if $\beta$ (and hence $\kappa$) is large, the
leading configurations to $Z_\ord$ and $Z_\dis$ are those
without contours, giving a contribution of $e^{-e_\ord L^d}$
and $e^{-e_\dis L^d}$, respectively. For $Z_\tun$, the leading
configurations have a single pair of parallel interfaces of
area $L^{d-1}$ each, and no contours, giving a contribution of
at most
 $e^{-2\kappa L^{d-1}}\max\{qe^{-e_\ord L^d},e^{-e_\dis L^d}\}$.
If we took only the leading configurations into account, we
therefore would get that $Z_\tun/Z$ is exponentially suppressed
like $e^{-2\kappa L^{d-1}}$, as required for the first bound in
Lemma~\ref{lem:Con-sum}.  But of course, this is too naive,
since subleading contributions have to be taken into account. A
systematic way to do this is provided by the powerful theory of
Pirogov and Sinai.

%%%% TO INTEGRATE
\subsection{Ingredients from Pirogov-Sinai Theory}

%
%It will be convenient to rewrite the partition functions
%$Z_\ord$ and $Z_\dis$ in terms of sums over matching contours,
%similar to the representation \eqref{Z-cont-rep} for $Z$.
%
%Recall the decomposition \eqref{Z=o+d+tunnel} of the partition
%function $Z$, as well as the representation
%.
%
%
%Similar to $Z$, the
%partition functions $Z_\ord$ and $Z_\dis$ can  be written
%as sums over sets of matching contours, now with the additional
%constraints that the external labels are
%$\ord$ and $\dis$, respectively.

%% INTEGRATE : NOTE THAT WE PROVE THE LEMMAS IN THIS SECTION ITSELF, AND NOT IN SEC. 7.
%% AND WE DON'T PROVE MIXING TIME L.B. HERE.

In this section we will prove Lemma~\ref{lem:Con-sum}. To this
end, we will express $Z_\tun$ in terms of partition functions
that are analogs of $Z_\ord$ and $Z_\dis$  for a subset
$\La\subset \bV$ such that
\begin{equation}
\label{form-La}
\La \text{ is a connected component of }
\bV\setminus \partial \bV(A_0)
\text{ for some }A_0\in\Omega.
\end{equation}
We say that a contour $\gamma$ is a contour in $\La$ if
$\dist(V(\gamma),\bV\setminus\La)\geq 1/2$, and we say that a
set of contours $\Gamma$ with matching labels has external
label $\ell\in\{\ord,\dis\}$,  if the external contours in
$\Gamma$ have external label $\ell$. We then set
\begin{equation}
\label{Z-dis-La}
Z_\dis(\La)=\sum_{\Gamma}
q^{c(\bV_\ord)}
e^{-{e_\dis}|\bV_{\dis}\cap V\cap\La|}
e^{-{e_\ord} |\bV_\ord\cap V\cap\La|}
\prod_{\gamma\in\Gamma} e^{-\kappa\|\gamma\|},
\end{equation}
where the sum goes over sets of contours $\Gamma$ with matching
labels such that the external label of $\Gamma$ is $\dis$, and
similarly for $Z_\ord(\La)$:
\begin{equation}
\label{Z-ord-La}
qZ_\ord(\La)=\sum_{\Gamma}
q^{c(\bV_\ord)}
e^{-{e_\dis}|\bV_{\dis}\cap V\cap\La|}
e^{-{e_\ord} |\bV_\ord\cap V\cap\La|}
\prod_{\gamma\in\Gamma} e^{-\kappa\|\gamma\|}.
\end{equation}
{Note that these partition functions are indeed
generalizations of the partition functions $Z_\dis$ and
$Z_\ord$ introduced in \eqref{Zodt-def}. To see this, it is
enough to compare the weights in \eqref{Z-dis-La} and
\eqref{Z-ord-La} to the weight $w(A)$ from \eqref{FK3}, which
shows that $Z_\dis(\bV)=Z_\dis$  and $Z_\ord(\bV)=Z_\ord$.}

With the above definitions, the partition function $Z_\tun$ can
be rewritten as
\begin{equation}
\label{Z-tun-alt}
Z_\tun= \sum_{\cS}\prod_{S\in\cS}
e^{-\kappa \|S\|}
\prod_{\La\in\cC_\dis(\cS)} Z_\dis(\La)
\prod_{\La\in\cC_\ord(\cS)} \Bigl(qZ_\ord(\La)\Bigr),
\end{equation}
where the first sum goes over interface networks, while
$\cC_\ord(\cS)$ is the set of components of
$\bV\setminus\bigcup_{S\in \cS} S$ with $\ell(C)=\ord$, and
$\cC_\dis(\cS)$ is the set of components of
$\bV\setminus\bigcup_{S\in \cS} S$ with $\ell(C)=\dis$. The
formal proof of this, by now almost obvious, identity uses
again the forest structure of sets of pairwise compatible
contours established in Lemma~\ref{lem:partial-order}, and is
similar to that of Corollary~\ref{cor:match}.

\subsubsection{An alternative representation}

We will need a representation for the partition functions
$Z_\dis(\La)$ and $Z_\ord(\La)$ which does not involve the
restriction of matching labels.  To this end, first sum all
terms in \eqref{Z-dis-La} and \eqref{Z-ord-La} which lead to
the same set, $\Gamma_\ext$, of external contours.  Taking into
account  the forest structure of sets of pairwise compatible
contours established in Lemma~\ref{lem:partial-order}, this
leads to the identities
\begin{equation}
\label{Z-ord-alt0}
Z_\ord(\La)
= \sum_{\Gamma_\ext}
e^{-e_\ord|\La\cap\Ext\Gamma_\ext\cap V|}
\prod_{\gamma\in\Gamma_\ext} e^{-\kappa \|\gamma\|}Z_\dis(\Int\gamma)
\end{equation}
and
\begin{equation}
Z_\dis(\La)
= \sum_{\Gamma_\ext}
e^{-e_\dis|\La\cap\Ext\Gamma_\ext\cap V|}
\prod_{\gamma\in\Gamma_\ext} e^{-\kappa \|\gamma\|}qZ_\ord(\Int\gamma),
\end{equation}
where the sums run over sets of mutually external contours in
$\La$ which all have external label $\ord$ and $\dis$,
respectively.

Defining
\begin{equation}
\label{K-ord}
K_\ord(\gamma)= e^{-\kappa \|\gamma\|}\frac{Z_\dis(\Int\gamma)}{Z_\ord(\Int\gamma)},
\end{equation}
we rewrite \eqref{Z-ord-alt0} as
\begin{equation}
\label{Z-ord-ind1}
Z_\ord(\La)
=\sum_{\Gamma_\ext}
e^{-e_\ord|\La\cap\Ext\Gamma_\ext\cap V|}
\prod_{\gamma\in\Gamma_\ext} K_\ord(\gamma){Z_\ord(\Int\gamma)}.
\end{equation}
Inserting \eqref{Z-ord-ind1} inductively into itself, and using
the forest structure of sets of pairwise compatible contours
one last time, we finally arrive at the representation
\begin{equation}
\label{Z-ord-alt}
Z_\ord(\La)
=  e^{-e_\ord|\La\cap V|}\sum_{\Gamma}
\prod_{\gamma\in\Gamma} K_\ord(\gamma)\,,
\end{equation}
where the sum now runs over sets $\Gamma$ of pairwise
compatible contours in $\La$ which all have external label
$\ord$.  In a similar way, one shows that
\begin{equation}
\label{Z-dis-alt}
Z_\dis(\La)
= e^{-e_\dis|\La\cap V|}\sum_{\Gamma}
\prod_{\gamma\in\Gamma} K_\dis(\gamma),
\end{equation}
where the sum  runs over sets $\Gamma$ of pairwise compatible
contours in $\La$ which all have external label $\dis$, and
\begin{equation}
\label{K-dis}
K_\dis(\gamma)= e^{-\kappa \|\gamma\|}\frac{qZ_\ord(\Int\gamma)}{Z_\dis(\Int\gamma)}.
\end{equation}

The representations \eqref{Z-ord-alt} and \eqref{Z-dis-alt}
give $Z_\ord(\La)$ and $Z_\dis(\La)$ as partition functions of
the so-called abstract polymer systems, see, e.g.,
\cite{Brydges} or \cite{B}, for a review. As a consequence, the
logarithms of $Z_\ord(\La)$ and $Z_\dis(\La)$ can be analyzed
by absolutely convergent expansions (so-called
Mayer-expansions), provided the weights \eqref{K-ord} and
\eqref{K-dis} are sufficiently small.

The following lemma gives the bounds needed to apply these
expansions. Given the geometric preparations of the last
section, its proof follows from a careful extension of the
methods of \cite{BK,BKM}.  For the convenience of the reader,
we give it in the appendix.

\begin{lemma}
\label{lem:PS} Let $d\geq 2$.  Then there are  constants
$q_0>0$ and $c>0$, as well as two real-valued functions
$f_\ord=f_\ord(q,\beta)$ and $f_\dis=f_\dis(q,\beta)$ such that
the following statements hold for $\ell\in\{\ord,\dis\}$,
$q\geq q_0$, and $\beta\geq \beta_0$:

(i) Let $f=\min\{f_\ord,f_\dis\}$, and let $a_\ell=f_\ell-f$.
If $\gamma$ is a contour with external label $\ell$ and $a_\ell
\,\diam\gamma\leq c\beta$, then
\begin{equation}
\label{K-bd}
K_\ell(\gamma)\leq e^{-c\beta\|\gamma\|}\,.
\end{equation}

(ii) If $\La\subset \bV$ is of the form \eqref{form-La}, then
\begin{equation}
\label{PS-lb1}
Z_\ell(\La)\geq e^{-(f_\ell+\eps_L)|\La\cap V|}e^{-\|\partial\La\|}
\end{equation}
and
\begin{equation}
\label{PS-up1}
Z_\ell(\La)\leq
e^{(-f+\eps_L)|\La\cap V|}e^{2\|\partial\La\|}
\max_{\Gamma_\ext}e^{-\frac {a_\ell}2|\La\cap\Ext\Gamma_\ext\cap V|}
\prod_{\gamma\in\Gamma_\ext}
e^{-\frac c2\beta\|\gamma\|},
\end{equation}
where the maximum goes over sets of mutually external contours
in $\La$ which all have external label $\ell$, and $\eps_L=2
e^{-c\beta L}$.

%% PT: removed  an extra $L$ from defn. of $\eps$; it used to say $2L e^{-c\beta L}$.
(iii) $f_\ord\leq f_\dis$ if $\beta\geq \beta_0$, with equality
if $\beta=\beta_0$.
\end{lemma}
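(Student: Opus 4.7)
The plan is to carry out Pirogov-Sinai/Zahradnik theory directly in the torus setting, using the partial order of Lemma~\ref{lem:partial-order} and the isoperimetric estimate of Lemma~\ref{lem:iso} in place of the usual $\bbR^d$ embeddings. Following Zahradnik, I introduce truncated contour activities $K'_\ell(\gamma) = K_\ell(\gamma)\chi_\ell(\gamma)$, where the cutoff $\chi_\ell(\gamma) \in (0,1]$ equals $1$ whenever the self-consistent stability condition $a_\ell\,\diam\gamma \leq c\beta$ holds and otherwise equals the exponential factor that forces the Peierls bound. Truncated partition functions $Z'_\ell(\La)$ are then defined by substituting $K'_\ell$ into the representations \eqref{Z-ord-alt}--\eqref{Z-dis-alt}, and the free energies $f_\ord, f_\dis$ are defined self-consistently via the cluster expansion of $Z'_\ell(\bV)$, with $f = \min\{f_\ord, f_\dis\}$ and $a_\ell = f_\ell - f$.

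The first step is to prove the unconditional Peierls bound $K'_\ell(\gamma) \leq e^{-c\beta\|\gamma\|}$ by induction on $\diam\gamma$. Noting that $\Int\gamma$ is a component of $\bV\setminus\partial\bV(A_0)$ for a suitable $A_0$ (so of the form \eqref{form-La}), the inductive hypothesis lets me apply the cluster expansion to both $Z'_\ord(\Int\gamma)$ and $Z'_\dis(\Int\gamma)$, converting the ratio in \eqref{K-ord}--\eqref{K-dis} into $e^{a_\ell |\Int\gamma\cap V|}$ up to lower-order corrections. Combined with $\kappa = \beta/2 + O(e^{-\beta})$ and the bound $|\Int\gamma\cap V| \leq \frac12\|\gamma\|\,\diam\gamma$ from \eqref{iso1}, this gives the required bound in the stable regime, while in the unstable regime the truncation factor $\chi_\ell$ compensates directly. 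The enumeration estimates from Lemma~\ref{lem:cont-estimates} furnish the standard convergence criterion for the cluster expansion, producing $\log Z'_\ell(\La) = -f_\ell|\La\cap V| + R_\ell(\La)$ with $|R_\ell(\La)| \leq \eps_L|\La\cap V| + C\|\partial\La\|$ and $\eps_L \leq 2e^{-c\beta L}$.

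For (ii), the lower bound \eqref{PS-lb1} follows from $Z_\ell(\La) \geq Z'_\ell(\La)$ together with the expansion of $\log Z'_\ell(\La)$, absorbing the boundary correction into the $e^{-\|\partial\La\|}$ factor. For the upper bound \eqref{PS-up1}, I group the sum in \eqref{Z-ord-alt} by its set $\Gamma_\ext$ of outermost external contours, separating $\Gamma_\ext$ into stable contours (where $K_\ell = K'_\ell$, handled by the truncated expansion, giving the $e^{-f|\La\cap V|}$ factor since $f_\ell \geq f$) and unstable contours (where the excess $a_\ell$-exponent is absorbed using \eqref{iso1} to yield the factors $e^{-a_\ell|\La\cap\Ext\Gamma_\ext\cap V|/2}$ and $e^{-c\beta\|\gamma\|/2}$). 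For (iii), I observe that $e_\ord = O(e^{-\beta})$ is decreasing while $e_\dis = d\beta - \log q$ is increasing in $\beta$, so $f_\ord - f_\dis$ is monotone; by \eqref{beta0-asym} and the standard identification of the Pirogov-Sinai transition point with the crossing of the two metastable free energies, $f_\ord \leq f_\dis$ for $\beta \geq \beta_0$ with equality at $\beta_0$.

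The main obstacle is making the Peierls induction close uniformly on the torus, since our contours include tubes that wind long distances through $\bV$ while having trivial winding number. The resolution is twofold: the diameter-based smallness criterion $a_\ell\,\diam\gamma \leq c\beta$ (rather than a volume-based one) together with the bound \eqref{iso1} keeps the induction going for arbitrary contour geometry, and the fact that every $\Int\gamma$ fits the template \eqref{form-La} means the recursion \eqref{K-ord}--\eqref{K-dis} stays within the class of partition functions controlled by the expansion. The cluster-expansion machinery itself, once the Peierls bound is secured, is insensitive to the topology of the underlying domain.
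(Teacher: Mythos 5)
Your overall strategy for parts (i) and (ii) is the same as the paper's: truncate the activities \`a la Zahradnik, define $f_\ord,f_\dis$ through the convergent cluster expansion of the truncated models (using Lemma~\ref{lem:cont-estimates} and \eqref{iso1}), get \eqref{PS-lb1} from $Z_\ell(\La)\geq Z'_\ell(\La)$, and prove \eqref{K-bd} and \eqref{PS-up1} by a joint induction with a split of external contours into stable and unstable ones. Two points are glossed over, though. First, the paper inducts on the nesting \emph{level} of contours rather than on $\diam\gamma$; on the torus it is not obvious that $\gamma'<\gamma$ forces $\diam\gamma'<\diam\gamma$ (think of long tubes), so you should either verify well-foundedness of your diameter induction or switch to the level induction. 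Second, and more substantively, in the upper bound the phrase ``the excess $a_\ell$-exponent is absorbed using \eqref{iso1}'' hides the real work: after fixing the set of unstable external contours you must still \emph{sum} over such sets, and the factor $e^{-\frac{a_\ell}2|\La\cap\Ext\Gamma_\ext\cap V|}$ alone does not control that entropy. The paper converts the sum into the maximum appearing in \eqref{PS-up1} times a factor $e^{\|\partial\La\|}$ by introducing an auxiliary polymer model supported on large contours only, and proving that its free energy is at most $a_\dis/2$ in absolute value (this uses \eqref{g-larger-diam-g}, which forces large contours to have $\|\gamma\|\geq 2c\beta/a_\dis$, i.e.\ very small total activity), followed by a telescoping over interiors. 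Some argument of this kind is needed; without it the step from the sum to the max in \eqref{PS-up1} is unjustified.

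Part (iii) contains a genuine gap. You appeal to ``the standard identification of the Pirogov-Sinai transition point with the crossing of the two metastable free energies,'' but in this paper $\beta_0$ is \emph{defined} as $\inf\{\beta: M(\beta)>0\}$ via the $1$-boundary-condition magnetization, an object a priori unrelated to the torus contour model; the identification of $\{a_\ord=0\}$ with $\{M(\beta)>0\}$ is precisely what has to be proved, and it is the content of the appendix (Lemma~\ref{lem:PS-A2}): one needs continuity of $f_\ell$ in $\beta$ from the expansions, a contour representation of $M_\LA(\beta)$ obtained from the Edwards-Sokal measure with $1$-boundary conditioning, a Peierls argument giving $M(\beta)>0$ when $a_\ord=0$, and, when $a_\ord>0$, a bound showing $|\LA\cap\Ext\Gamma|\leq L^{d-1/2}$ with high probability, obtained by playing an upper bound on the constrained contour sum against a lower bound on $Z_\ord(\La)$ produced by inserting a single large contour (this uses $a_\dis=0$ in that regime). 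Your shortcut also relies on two unproved assertions: monotonicity of $f_\ord-f_\dis$ in $\beta$ does not follow from the monotonicity of $e_\ord$ and $e_\dis$ alone, since $f_\ell$ contains the contour corrections (one would need, e.g., derivative bounds from the truncated expansion); and \eqref{beta0-asym} locates $\beta_0$ only up to $O(q^{-1/d})$, which cannot yield the exact statements $f_\ord\leq f_\dis$ for $\beta\geq\beta_0$ and $f_\ord=f_\dis$ at $\beta=\beta_0$.
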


\subsubsection{Proof of Lemma~\ref{lem:Con-sum}}

We start by noting that {by \eqref{beta0-asym} and
\eqref{kappa-def}, the assumption} $\beta\ge \beta_0$ implies
that
%$\beta\geq \frac 1d\log q-{\frac 12}$ and
$$
{\kappa\geq \frac \beta 2  -\frac 14\geq\frac 1{2d}\log q
- \frac 12}\,,
$$
provided $q$ is large enough (depending on $d$). {We also
recall the notation  $V=V_{L,d}=(\bbZ/L\bbZ)^d$ for the vertex
set of the $d$-dimensional discrete torus of sidelength $L$.}

To prove \eqref{Om-Tun-bd}, we combine \eqref{Z-tun-alt} with
\eqref{PS-up1}, to conclude that
\begin{equation}
\label{Z-tun-bd}
\begin{aligned}
Z_\tun
&\le
 \sum_{\cS}\prod_{S\in\cS}
e^{-\kappa \|S\|}
\prod_{\La\in\cC_\dis(\cS)} e^{(-f{+\eps_L})|\La\cap V|}e^{2\|\partial\La\|}
\prod_{\La\in\cC_\ord(\cS)} qe^{(-f{+\eps_L})|\La\cap V|}e^{2\|\partial\La\|}
\\
&=e^{{(-f+\eps_L)}L^d}\sum_{\cS}\prod_{S\in\cS}
qe^{-(\kappa -4)\|S\|}\,,
\end{aligned}
\end{equation}
where the sum goes over interface networks. In the last step we
used that each interface bounds precisely one ordered and one
disordered component. Using the facts that $q\le e^{2d\kappa}
e^d$, that there are at most
 $2dL(Cd)^k$ interfaces of size $k$
in $V_{L,d}$ (for an appropriate universal constant $C<
\infty$), and that the sum over interface networks contains at
least two interfaces, the bound \eqref{Z-tun-bd} implies that
for $q$ and $L$ large enough (depending on $d$), we have
\begin{equation}
\begin{aligned}
Z_{\tun}
&\le e^{{(-f+\eps_L)}L^d}\sum_{n\ge 2} \Bigl(\sum_{k\ge L^{d-1}}
e^{2d\kappa + d}\, 2dL(Cd)^k e^{ - (\kappa-4)k} \Bigr)^n\\
&={e^{{ (-f+\eps_L)}L^d}\sum_{n\ge 2} \Biggl(e^{2d\kappa + d}\, 2dL
\frac{\bigl(Cde^{ - (\kappa-4)}\bigr)^{L^{d-1}}}
{1-Cd e^{ - (\kappa-4)}}
 \Biggr)^n}\\
&{\leq e^{{(-f+\eps_L)}L^d}\sum_{n\ge 2}e^{-\frac 34\kappa L^{d-1} n}}
\le e^{-fL^d} e^{-\frac{\beta}{2}L^{d-1}}\,,
\end{aligned}
\end{equation}
{where we used that $L^d\eps_L\leq e^{(2d-c\beta)L}$ to
control the factor $e^{\eps_LL^d}$.}

Applying the bound \eqref{PS-up1} to $Z_\dis =
Z_\dis(V_{L,d})$, we get
\[Z_\dis \le e^{-fL^d + L^d\eps_L} \le e^{-fL^d} (1+e^{-c\beta L/2})\,, \]
while the bound \eqref{PS-lb1}, together with the fact that
$f_\ord = f$ if $\beta \ge \beta_0$, gives
\[Z_\ord \ge e^{-fL^d - L^d\eps_L} \ge e^{-f L^d}(1- e^{-c\beta L/2})\,. \]
Together with \eqref{Z=o+d+tunnel}, this gives the first
statement of the lemma.

To prove the second statement, we use that $f=f_\dis = f_\ord$
if $\beta=\beta_0$ which, together with \eqref{PS-lb1} and
\eqref{PS-up1} gives
\[|\log Z_\dis + fL^d| \le L^d\eps_L \le e^{-c\beta L/2} \mbox{ \
and \ }
|\log Z_\ord + fL^d| \le L^d\eps_L \le e^{-c\beta L/2} \,.\]

\subsection{Large Deviation Bounds}
\label{sec:LargeDev}

We now prove Lemma~\ref{mainlem}, by starting  with the proof
of \eqref{mainlem-a}. Let
\[Z_\ord^{<\alpha} = \frac{1}{q} \sum_{A \in \Omega_\ord \colon
\atop{|\Ext\Gamma(A) \cap V| < (1-\alpha)L^d}} w(A)\,,\]
so that
\begin{equation}
\label{main-ord-bd}
\nu(\Om_\ord\setminus \Om_\ord^{(\alpha)}) =
\frac{qZ^{<\alpha}_\ord}{Z} \le \frac{Z^{<\alpha}_\ord}{Z_\ord}\,.
\end{equation}
Proceeding as in the derivation of \eqref{Z-ord-alt}, we
rewrite
\[Z_\ord^{<\alpha} = e^{-e_\ord L^d} \sum_{\Gamma \colon
|\Ext \Gamma\cap V|<(1-\alpha)L^d}\,\, \prod_{\gamma \in \Gamma} K_\ord(\gamma)\,, \]
where the sum runs over sets $\Gamma$ of pairwise compatible
contours in $V_{L,d}$, all of which have external label $\ord$.

{Let $h$ be an arbitrary non-negative number.} Using the
fact that $|\Ext\Gamma \cap V| < (1-\alpha)L^d$ implies that
 $\sum_{\gamma \in \Gamma} |\Int {\gamma} \cap V|\ge \alpha L^d$,
 we then bound
 \begin{equation}
 \label{ord-less}
 Z_\ord^{<\alpha} \le e^{-e_\ord L^d} \sum_\Gamma e^{h(\sum_\gamma |\Int\gamma \cap V|-\alpha L^d)} \
 \, \prod_{\gamma\in \Gamma}K_{\ord}(\gamma)
 = e^{-\alpha h L^d} Z_\ord^{(h)}\,,
\end{equation}
where
\[Z_\ord^{(h)} = e^{-e_\ord L^d} \sum_{\Gamma}  \, \prod_{\gamma\in \Gamma}
e^{h|\Int\gamma \cap V|} K_\ord(\gamma)\,.\]

Next we estimate the dependence of $Z_\ord^{(h)}$ on $h$.  To
this end, we set $K_h(\gamma)=e^{h|\Int\gamma\cap V|}
K_\ord(\gamma)$ and {use a Peierls type argument to bound
the derivative of $Z_\ord^{(h)}$.  Explicitly, we first rewrite
the derivative of $Z_\ord^{(h)}$ as}
\[
\begin{aligned}
\frac{d}{d h} Z_\ord^{(h)}
& = e^{-e_\ord L^d} \,\sum_{\Gamma} \Bigl(\sum_{\gamma\in \Gamma} |\Int\gamma \cap V|\Bigr)
\prod_{\gamma \in \Gamma} K_h(\gamma) \\
& = e^{-e_\ord L^d} \,\sum_{\gamma \subset V_{L,d}} |\Int\gamma \cap V| \sum_{\Gamma \ni \gamma}
 \prod_{\gamma' \in \Gamma} K_h(\gamma')\\
&{=e^{-e_\ord L^d} \,\sum_{\gamma \subset V_{L,d}} |\Int\gamma \cap V|K_h(\gamma)
\sump_{\Gamma'}
 \prod_{\gamma' \in \Gamma'} K_h(\gamma'),}
\end{aligned}
\]
{where the sum over $\gamma \subset V_{L,d}$ denotes a
sum over contours in $V_{L,d}$ with external label $\ord$, and
the sum $\sump$ denotes a sum over sets $\Gamma'$ of pairwise
compatible contours in $V_{L,d}$ such that
\begin{itemize}
\item  all contours $\gamma'\in\Gamma$ have external label
    $\ord$;
\item all contours $\gamma'\in\Gamma$ are compatible with
    $\gamma$.
\end{itemize}
Removing the second constraint, we bound this sum by
$$
\sump_{\Gamma'}
 \prod_{\gamma' \in \Gamma'} K_h(\gamma')\leq
 \sum_{\Gamma}
 \prod_{\gamma' \in \Gamma} K_h(\gamma')
 =e^{e_\ord L^d} Z_\ord^{(h)},
$$
which shows that}
\[
\begin{aligned}
\frac{d}{d h} \log Z_\ord^{(h)}
&=\frac 1{Z_\ord^{(h)}}\frac{d}{d h} Z_\ord^{(h)}
\le \sum_{\gamma \subset V_{L,d}} |\Int\gamma \cap V|  K_h(\gamma) \\
&\le \sum_{\gamma \subset V_{L,d}}{ \frac14 \|\gamma\|^2} \, e^{h\|\gamma\|L} e^{-c \beta \|\gamma\|}\,,
\end{aligned}
\]
where we used the bounds {\eqref{g-larger-diam-g}},
\eqref{iso1}  and \eqref{K-bd}, together with the fact that
$f_\ord = f$ for $\beta \ge \beta_0$, in the last step.
%PT: In the above and below, incorporated the extra factor 1/2 from the {iso1} bound

Now assume that  $h\le c\beta/(2L)$. Since there are most
$dL^d(Cd)^k$ contours of size $k$ in $V_{L,d}$, we conclude
that
\[
\frac{d}{d h} \log Z_\ord^{(h)}
\le \frac12 \sum_{k\ge 2}dL^d(Cd)^k {k^2} e^{- c\beta k/2} \le \frac{\alpha}{4}L^d\,,
\]
and thus
\[Z_\ord^{(h)} \le Z_\ord \, e^{\alpha hL^d/4}\,,\]
provided $\beta$ is large enough (depending on $d$ and
$\alpha$.) Inserted into \eqref{ord-less} and
\eqref{main-ord-bd}, this gives
\[\nu(\Om_\ord \setminus \Om_\ord^{\alpha}) \le e^{-3\alpha hL^d/4}
 = e^{-3c\alpha \beta L^{d-1}/8}\,,\]
where we have set $h$ equal to $c\beta/(2L)$ in the last step.

This proves \eqref{mainlem-a}. For $a_\dis L \le c\beta$, the
bound \eqref{mainlem-b} is proved in exactly the same way, but
for $a_\dis L > c\beta$, this strategy does not work, since
\eqref{K-bd} is not at our disposal anymore. In stead, we use
\eqref{PS-lb1}, \eqref{PS-up1}  and \eqref{iso1}  to bound
\[
\begin{aligned}
\nu(\Om_\dis \setminus \Om_\dis^{(\alpha)}) \le \nu(\Om_\dis)
&\le \frac{Z_\dis}{Z_\ord} \le e^{2L^d\eps_L} \max_{\Gamma_\ext}
 e^{- \frac{a_\dis}2|\Ext\Gamma_\ext \cap V|} \,
\prod_{\gamma \in \Gamma_\ext} e^{-\frac c2\beta\|\gamma\|}\\
& \le e^{2L^d\eps_L} \max_{\Gamma_\ext}
e^{- \frac{c\beta}{2L}|\Ext\Gamma_\ext \cap V|}
\, \prod_{\gamma\in \Gamma_\ext} e^{-\frac{c\beta}{L} |\Int\gamma \cap V|}\\
& \le e^{2L^d\eps_L} e^{-\frac{c\beta}{2L} L^d}\,,
\end{aligned}
\]
%PT: once again incorporated factor 1/2 from {iso1} in the last inequality
where we used that $|\Ext \Gamma_\ext \cap V| = L^d -
\sum_{\gamma \in \Gamma_\ext}
 |\Int\gamma \cap V|$ in the last step.
 This concludes the proof of the lemma.

\section{Lower bounds on the mixing time}
\label{sec:LowerBound}

Lemma~\ref{lem:Con-sum} and Lemma~\ref{mainlem} give us the
necessary ingredients to prove our main result.  We start by
proving a key theorem which expresses the statements of these
lemmas in terms of the probability measure $\mu$ on spin
configurations, rather than the {random cluster} measure
$\nu$.

\subsection{An Important Probabilistic Estimate}

Recall that for a spin configuration $\bss\in [q]^V$, we
defined $E(\bss)$ to be the set of all edges whose two
endpoints have the same color. We also introduce the set,
$\cC(\bss)$, of connected components of the graph
$(V,E(\bss))$, that is,  the set of monochromatic components.

\begin{theorem}
Let $d\geq 2$ and $0<\alpha<1/2$.  Then there are constants
$c>0$, $q_0<\infty$ and $L_0<\infty$ such that for $q\geq q_0$
and $L\geq L_0$ the following statements hold:

\noindent {\bf (a)}   If $\beta=\beta_0$, then
\begin{equation}
\label{SW-input1}
\mu\Bigl(\alpha d L^d{<}|E(\bss)|
{<} (1-\alpha)dL^d\Bigr)
\leq e^{-c\beta L^{d-1}}
\end{equation}
and
\begin{equation}
\label{SW-input2}
\Bigl|\mu\Bigl(|E(\bss)|\geq (1-\alpha)dL^d\Bigr)
-\frac q{q+1}\Bigr|\leq e^{-c\beta L}.
\end{equation}

\noindent {\bf (b)} If $\beta\geq\beta_0$, then
\begin{equation}
\label{HB-input1}
\mu\biggl(\alpha L^d<\max_{C\in\cC(\bss)}|V(C)|
<(1-\alpha)L^d\biggr)
\leq e^{-c\beta L^{d-1}}
\end{equation}
and
\begin{equation}
\label{HB-input2}
\mu\biggl(\max_{C\in\cC(\bss)}|V(C)|\geq (1-\alpha)L^d\biggr)
\geq
\frac q{q+1} -
e^{-c\beta L}.
\end{equation}
\end{theorem}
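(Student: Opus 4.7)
The plan is to transfer the random-cluster estimates of Lemmas~\ref{lem:Con-sum} and~\ref{mainlem} to the spin marginal $\mu$ via the Edwards--Sokal joint distribution $\pi_T$ of \eqref{ESmes}.  Recall that $\pi_T$ has $\nu$ as its edge marginal and $\mu$ as its spin marginal, and that conditional on an edge configuration $A$, the spin configuration $\bss$ is obtained by coloring each connected component of $(V,A)$ independently and uniformly from $[q]$; in particular $A\subseteq E(\bss)$, and every component of $(V,A)$ is contained in a single monochromatic component of $(V,E(\bss))$.  I will decompose $\Om$ as $\Om_\ord^{(\alpha)}\cup\Om_\dis^{(\alpha')}\cup R$, where $\alpha'\in(0,\alpha)$ is a small auxiliary parameter to be chosen below and
\[
 R:=(\Om_\ord\setminus\Om_\ord^{(\alpha)})\cup(\Om_\dis\setminus\Om_\dis^{(\alpha')})\cup\Om_\tun,
\]
so that Lemmas~\ref{lem:Con-sum} and~\ref{mainlem} give $\nu(R)\le 3 e^{-c\beta L^{d-1}}$.

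On $\Om_\ord^{(\alpha)}$, Lemma~\ref{lem:extA}(iii) combined with Lemma~\ref{lem:simple}(iii) shows that the vertex set $V_\ext:=\Ext\Gamma(A)\cap V$ lies in a single connected component of $(V,A)$ and therefore, under $\pi_T$, in a single monochromatic component of $\bss$.  Since $|V_\ext|\ge(1-\alpha)L^d$, at most $d\alpha L^d$ edges of $T_{L,d}$ touch $V\setminus V_\ext$, so at least $d(1-\alpha)L^d$ edges have both endpoints in $V_\ext$ and are consequently in $A\subseteq E(\bss)$.  Hence, \emph{deterministically} for $A\in\Om_\ord^{(\alpha)}$,
\[
 |E(\bss)|\ \ge\ d(1-\alpha)L^d
 \qquad\text{and}\qquad
 \max_{C\in\cC(\bss)}|V(C)|\ \ge\ (1-\alpha)L^d.
\]
This already yields the lower bounds in \eqref{SW-input2} and \eqref{HB-input2} through $\mu(\cdot)\ge\nu(\Om_\ord^{(\alpha)})\ge\nu(\Om_\ord)-\nu(\Om_\ord\setminus\Om_\ord^{(\alpha)})$ and the cited lemmas.

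On $\Om_\dis^{(\alpha')}$, the condition $\Ext\Gamma(A)\subset\bV\setminus\bV(A)$ together with $|\Ext\Gamma(A)\cap V|\ge(1-\alpha')L^d$ forces $|V(A)|\le\alpha'L^d$, so all but at most $\alpha'L^d$ vertices are isolated in $(V,A)$ and, under $\pi_T$, receive i.i.d.\ uniform colors from $[q]$.  A standard Peierls union bound---using that the number of connected edge subsets of size $k$ containing a fixed vertex is at most $(Cd)^k$ (cf.\ Lemma~\ref{lem:cont-estimates}(iii)) and that each such subset is monochromatic with probability at most $q^{-(k-1)}$---shows that for $q$ sufficiently large (depending on $d$ and $\alpha$), the conditional probability given $A$ of any monochromatic connected vertex set of size $\ge\alpha L^d$ is at most $L^d q(Cd/q)^{\alpha L^d}\le e^{-cL^d}$.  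A parallel first-moment estimate on the extra monochromatic edges produced by random colour coincidences gives $\Pr(|E(\bss)|>\alpha dL^d\mid A)\le e^{-cL^d}$, provided $\alpha'$ is chosen small enough that $d\alpha'+d/q<\alpha/2$.

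Assembling the three pieces then proves the theorem.  The intermediate events in \eqref{SW-input1} and \eqref{HB-input1} contribute $0$ from $\Om_\ord^{(\alpha)}$, at most $e^{-cL^d}\nu(\Om_\dis^{(\alpha')})$ from $\Om_\dis^{(\alpha')}$, and at most $\nu(R)\le 3e^{-c\beta L^{d-1}}$ from $R$, producing the required $e^{-c\beta L^{d-1}}$ bound.  The upper bound in \eqref{SW-input2} uses the same decomposition, with the dominant error now being the sharper estimate $|\nu(\Om_\ord)-q/(q+1)|\le e^{-c\beta L}$ of \eqref{Om-ord-equ-bd}, since the other error terms are $\le e^{-c\beta L^{d-1}}\le e^{-c\beta L}$ for $d\ge 2$.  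The main obstacle I anticipate is the Peierls step on the disordered side: it needs the i.i.d.\ colouring to be subcritical uniformly in $L$, which forces the threshold on $q$ to be coordinated with $\alpha$---but this is consistent with the ``$q\ge q_0$'' hypothesis of the theorem.
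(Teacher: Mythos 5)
Your overall transfer strategy coincides with the paper's in outline (pass through the Edwards--Sokal coupling, use Lemmas~\ref{lem:Con-sum} and~\ref{mainlem} to reduce to the good sets, and exploit the connectivity of $\Ext\Gamma(A)\cap V$ from Lemma~\ref{lem:extA} on the ordered side), but on the disordered side you take a genuinely different route: you condition on the edge configuration $A$ and argue about the i.i.d.\ uniform coloring of the components of $(V,A)$, exploiting largeness of $q$. The paper instead conditions on $\bss$ and uses that $A$ is an independent $e^{-\beta}$-thinning of $E(\bss)$, which gives the clean binomial tail bound \eqref{E(bss)>A} via \eqref{simplebound}, and it handles the component statement \eqref{HB-input1} with the deterministic cycle-space inequality \eqref{C<C} rather than with any conditional probabilistic estimate. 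The paper's route is shorter precisely because it never needs a concentration argument under the conditional coloring measure.

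As written, however, your disordered-side estimates are not established. First, the assertion that ``a first-moment estimate'' yields $\Pr\bigl(|E(\bss)|>\alpha dL^d\mid A\bigr)\le e^{-cL^d}$ is a genuine gap: the expectation bound $\le |A|+dL^d/q$ combined with Markov's inequality gives only a constant (not even polynomially small) bound. To get exponential decay you must add a concentration step, e.g.\ discard deterministically the at most $2d\alpha' L^d$ edges meeting $V(A)$ and apply a bounded-differences/Chernoff bound to the number of color coincidences on edges joining isolated vertices (a function of i.i.d.\ colors with increments at most $2d$); this does give $e^{-cL^d}$, but it is a needed ingredient, not a first-moment computation. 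Second, in your Peierls step the probability that a fixed connected vertex set of size $k$ is monochromatic is not $q^{-(k-1)}$: colors are constant on each component of $(V,A)$, so the correct exponent is the number of distinct $(V,A)$-components meeting the set, which you must lower-bound by $k-\alpha' L^d$ (this is where $\alpha'<\alpha$ really enters), after which $q_0$ depends on $d,\alpha,\alpha'$. Third, a smaller slip on the ordered side: $|V\setminus V_\ext|\le\alpha L^d$ only bounds the number of edges touching the complement by $2d\alpha L^d$, so membership in $\Om_\ord^{(\alpha)}$ gives $|E(\bss)|\ge(1-2\alpha)dL^d$, not $(1-\alpha)dL^d$; you should work with $\Om_\ord^{(\alpha/2)}$ (as the paper does) so that the event $\{|E(\bss)|<(1-\alpha)dL^d\}$ is genuinely excluded, and note that the edges with both endpoints in $V_\ext$ need not lie in $A$ --- what you actually use is that they are monochromatic because $V_\ext$ lies in a single component of $(V,A)$ by Lemma~\ref{lem:extA}(iii). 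None of these defects is fatal, but the first two are the core of your disordered-side argument and need to be repaired before the proof is complete.
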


\begin{proof}
To relate the statements of Lemmas~\ref{lem:Con-sum} and
\ref{mainlem} to the theorem, we use that both the spin measure
$\mu$ and the FK-measure $\nu$ are marginals of the
Edwards-Sokal measure $\pi$.  Consider thus a configuration
$(\bss,A)$ with positive measure $\pi((\bss,A))$.  Under this
condition, all spins in a component of $(V,A)$ must have the
same color, implying in particular that
\begin{equation}
\label{A<E(bss)}
A\subset E(\bss)\,.
\end{equation}
It turns out that with high probability, $|A|$ is not much
smaller than $|E(\bss)|$ either.  More precisely, we will prove
that
\begin{equation}
\label{E(bss)>A}
\pi\Bigl(|E(\bss)|\geq |A|+{\tilde\alpha} dL^d\Bigr)
\leq {e^{-(\tilde\alpha \beta-1) dL^{d}}
\quad\text{for all}\quad
\tilde\alpha\in (0,1).}
\end{equation}
We will also show that, again under the condition that
$\pi((\bss,A))>0$,
\begin{equation}
\label{C>C}
\max_{C\in \cC(\bss)}|V(C)|\geq \max_{C\in \cC(V,A)}|V(C)|
\end{equation}
and
\begin{equation}
\label{C<C}
\max_{C\in \cC(\bss)}|V(C)|\leq \max_{C\in \cC(V,A)}|V(C)| + |E(\bss)\setminus A|+1.
\end{equation}
As we will see below, these bounds, together with
Lemmas~\ref{lem:Con-sum} and \ref{mainlem}, imply the
statements of the theorem.

Before showing this, we will prove \eqref{E(bss)>A} --
\eqref{C<C}. We start with the proof of \eqref{E(bss)>A}. To
this end, we rewrite the left hand side as
\[
\pi\Bigl(|E(\bss)|\geq |A|+\tilde\alpha dL^d\Bigr)
=\sum_{\bss}
\pi\Bigl(|E(\bss)|\geq |A|+\tilde\alpha dL^d\,\Big|\,\bss\Bigr)
\mu(\bss).
\]
But given $\bss$, a configuration $A$ according to the
conditional measure $\pi(\cdot\mid\bss)$ is  obtained by
deleting the edges in $E(\bss)$ independently with probability
$e^{-\beta}$.  The number of deleted edges is therefore equal
to a binomial random variable with parameters $m$ and
$e^{-\beta}$, where $m=|E(\bss)|\leq dL^d$. We now bound the
probability that the number of deleted edges $X$ is larger than
$\tilde\alpha dL^d$ as follows:
\begin{equation}
\label{simplebound}
\Pr (X \geq \tilde\alpha dL^d) = \sum_{k\geq\tilde\alpha dL^d} \binom{m}{k}
e^{-\beta k} (1-e^{-\beta})^{m-k} \leq e^{-\beta\tilde\alpha dL^d}2^{dL^d}.
\end{equation}
%Recalling that $\beta\geq \beta_0=\frac 1d\log q+O(q^{-1/d})$,
%we see that for $q$ (and hence $\beta$) large enough, the right
%hand side is bounded by $e^{-\beta\tilde\alpha dL^d/2} \leq
%e^{-\beta\tilde\alpha L^{d-1}}$.
This implies the bound \eqref{E(bss)>A}.

{Next, we observe that \eqref{C>C} follows from
\eqref{A<E(bss)}. To prove \eqref{C<C}, we first show that any
for  $C \in \cC(\bss)$ and any  $D\subset E(C)$, we have:
\begin{equation}
\label{cycle_space}
 |D| - |V(D)| \le |E(C)| - |V(C)|+1\,.
\end{equation}
The simplest (albeit non-elementary) way to see this is by
recalling that the dimension of the  cycle subspace of the edge
space of a graph on $m$ edges and $n$ vertices and $k$
components equals $m-n+k$; we view here the collection of all
edges as the $m$-dimensional vector space over $GF(2)$ and
consider the edge sets of all simple cycles as a subspace of
it. For non-empty $D$, this gives in fact the stronger bound
$$
 |D| - |V(D)|+1 \le |E(C)| - |V(C)|+1\,.
$$
If $D=\emptyset$, the bound (\ref{cycle_space}) is trivial,
since the right hand side of (\ref{cycle_space}) is
non-negative by the fact that $C$ is connected.

Consider now a component $\tilde C$ of $(V(A),A)$.  Since
$A\subset E(\bss)$, there must be a component $C\in\cC(\bss)$
such that $V(\tilde C)\subset V(C)$ and $E(\tilde C)\subset
E(C)$. Applying (\ref{cycle_space}) with $D=E(\tilde C)$, we
get
$$
\begin{aligned}
|V(C)|  &\le |V(\tilde C)|+ |E(C)| -|E(\tilde C)|  +1
\\
&\leq |V(\tilde C)| +|E(\bss)|-|A|+1
\\
&\leq \max_{C'\in\cC(V,A)}|V(C')| +|E(\bss)|-|A|+1.
\end{aligned}
$$
This in turn implies \eqref{C<C}.}

We are now ready to prove statement (a). To this end, consider
a configuration $A\in\Om_\ord^{(\alpha)}$, {where
$\Om_\ord^{(\alpha)}$ (and $\Om_\dis^{(\alpha)}$) are as
defined in (\ref{Omega_alpha})}.   Then {$\Ext\Gamma(A)\cap V$ is connected set by Lemma~\ref{lem:extA}.
If $\bss$ is such that $\pi((\bss,A))>0$, all edges joining two
points in $\Ext\Gamma(A)\cap V$ must then be part of $E(\bss)$.
Since the number of edges intersecting the  complement of
$\Ext\Gamma(A)\cap V$ is at most $2d(L^d-|\Ext\Gamma(A)\cap
V|)\leq 2d\alpha L^d$, we concluded that $E(\bss)$ contains at
least $dL^d-2d\alpha L^d$ edges.
%
%
%
% implying
%that all edges
%If $A\in\Om_\ord^{(\alpha)}$ then we know that this set has size at least
%$(1-\alpha)L^d$}
%
%$\bV(A)\supset \Ext\Gamma(A)$ (by the definition of $\Om_\ord$)
%and thus $|V(A)|=|\bV(A)\cap V| \geq |\Ext\Gamma(A)\cap V|\geq
%(1-\alpha)L^d$. This in turn implies $|V\setminus V(A)|\leq
%\alpha L^d$, and thus
%$$|A|=dL^d-|\{xy\in E\mid \{x,y\}\cap V\setminus V(A) \neq \emptyset\}|
%\geq dL^d-2d\alpha L^d=(1-2\alpha)dL^d.$$
In summary
\begin{equation}
\label{simple-impl1}
A\in\Om_\ord^{(\alpha)}\quad\text{and}\quad\pi((\bss,A))>0
\implies
|E(\bss)|\geq (1-2\alpha)dL^d.
\end{equation}
} On the other hand, by the fact that $|A|\leq d|V(A)|$ for all
$A\subset E$, we have that
\begin{equation}
\label{simple-impl2}
A\in\Om_\dis^{(\alpha)}
\implies
|V\setminus V(A)|\geq (1-\alpha)L^d
\implies
|A|\leq \alpha dL^d.
\end{equation}

We now turn to the first bound of the theorem. {To this
end, we use \eqref{simple-impl1}, \eqref{E(bss)>A}, and
\eqref{simple-impl2} to bound the left hand side of
\eqref{SW-input1} by
$$
\begin{aligned}
\nu(\Omega_\tun)
&+
\pi\Big(A\in\Omega_\ord\text{ and } |E(\bss)|< (1-\alpha)dL^d\Big)
+
\pi\Big(A\in\Omega_\dis\text{ and }|E(\bss)|>\alpha dL^d\Big)
\\
&\leq
\nu(\Omega_\tun)+\nu(\Omega_\ord\setminus\Omega_\ord^{(\alpha/2)})
+\nu(\Omega_\dis\setminus\Omega_\dis^{(\alpha/2)})
+e^{-(\frac12\alpha \beta -1){d}L^{d}}.
\end{aligned}
$$
%}
%Taking into
%account the bounds \eqref{A<E(bss)} and \eqref{E(bss)>A}, we
%have that the left hand side of \eqref{SW-input1} is  bounded
%by
%\[
%\nu\Bigl(\frac 12 \alpha d L^d< |A|<(1-\alpha)dL^d\Bigr)
%+e^{-\frac12\alpha \beta L^{d-1}}.
%\]
%But by \eqref{simple-impl1} and \eqref{simple-impl2}, the
%probability of the event $\frac 12 \alpha d L^d<
%|A|<(1-\alpha)dL^d$ is bounded
%$\mu(\Om\setminus(\Om^{(\alpha/2)}_\ord\cup\Om^{(\alpha/2)}_\dis))$,
%which in turn can be bounded with the help of the bounds
Bounding the terms on the right hand side with the help of
\eqref{Om-Tun-bd}, \eqref{mainlem-a} and \eqref{mainlem-b},} we
therefore obtain that there exists a constant $c>0$ depending
on $\alpha$ and $d$ such that
\[
\mu\Bigl(\alpha d L^d<|E(\bss)|<(1-\alpha)d L^d\Bigr)
\leq 4e^{-{2}c\beta L^{d-1}}
\leq e^{-{c}\beta L^{d-1}}\,,
\]
provided $q$ (and hence $\beta$) is large enough. This proves
the bound \eqref{SW-input1}.

The proof of the bound \eqref{SW-input2} is similar. {Indeed, starting again with the implication
\eqref{simple-impl1},
%that $E(\bss)\supset A$ if $\pi((\bss,A))>0$,
we have
\[
\begin{aligned}
\mu\Bigl(|E(\bss)|&\geq (1-\alpha)d L^d\Bigr)
\geq
%\nu\Bigl(|A|\geq (1-\alpha)dL^d\Bigr)
%\\
%&\geq
\nu\Bigl(\Om_\ord^{(\alpha/2)}\Bigr)
\geq
\frac{q}{q+1} - e^{-c\beta L^{d-1}} -e^{-c\beta L},
\end{aligned}
\]
where we used} the  bounds \eqref{Om-ord-low-bd} and
\eqref{mainlem-a} in the last step.

{On the other hand, by \eqref{E(bss)>A},
\[
\begin{aligned}
\mu\Bigl(|E(\bss)|&\geq (1-\alpha)d L^d\Bigr)
\leq
\nu\Bigl(|A|> (1-2\alpha) dL^d\Bigr)+e^{-(\alpha\beta-1)dL^d}
\\
&\leq
\nu(\Omega_\tun)+\nu(\Omega_\ord^{(2\alpha)})
+\nu(\Omega_\dis\setminus\Omega_\dis^{(1-2\alpha)})
+e^{-(\alpha\beta-1)dL^d}.
\end{aligned}
\]
Combined with the bounds \eqref{Om-Tun-bd},
\eqref{Om-ord-equ-bd}, \eqref{mainlem-a} and \eqref{mainlem-b},
this provides a matching upper bound on $\mu\bigl(|E(\bss)|\geq
(1-\alpha)d L^d\bigr)$, completing the proof of
\eqref{SW-input2}, and hence of part (a).}
%
%
%
%To prove the upper bound in
%\eqref{SW-input2}, we  start with the bound \eqref{E(bss)>A},
%and  that $(1-\alpha)^2=1-\alpha-\alpha(1-\alpha)$, in saying
%that
%\[
%\begin{aligned}
%\mu\Bigl(|E(\bss)|&\geq (1-\alpha)d L^d\Bigr)
%\leq
%\nu\Bigl(|A|\geq (1-\alpha)^2 dL^d\Bigr)
%+e^{-c\beta  L^{d-1}}\,,
%\end{aligned}
%\]
%where $c=\alpha(1-\alpha)$. Let $\alpha'=1-(1-\alpha)^2$ and
%$\tilde\alpha=\min\{\alpha',1-\alpha'\}$. If $|A|\geq
%(1-\alpha')dL^d$, then
%$A\in\Omega\setminus\Om_\dis^{(\tilde\alpha)}$ by the
%implication \eqref{simple-impl2}. Combining the bounds
%\eqref{Om-Tun-bd}, \eqref{mainlem-a} and \eqref{mainlem-b}, we
%may now conclude that for $\beta=\beta_0$, we have
%\[
%\begin{aligned}
%\mu\Bigl(|E(\bss)|&\geq (1-\alpha)d L^d\Bigr)
%\leq
%\nu\Bigl(\Om_\ord^{(\alpha')}\Bigr)+4e^{-\beta c L^{d-1}}
%\\
%&\leq \frac{q}{q+1}+4e^{-\beta c L^{d-1}}+e^{-c\beta L}
%\leq e^{-c\beta L/2},
%\end{aligned}
%\]
%where we also used the bound \eqref{Om-ord-equ-bd} in the
%second to last step. This completes the proof of the bound
%\eqref{SW-input2}.

The bounds of part (b) are proved in a similar way. Indeed, let
$A\in\Om_\ord^{(\alpha)}$ and let $A_\ext$ be the set of edges
with both endpoints in $\Ext\Gamma(A)$. By Lemma~\ref{lem:extA}
(iii), the graph $(V(A_\ext),A_\ext)$ is connected, implying
that $\max_{C\in\cC(V,A)}|V(C)|\geq (1-\alpha)L^d$ whenever
$A\in\Om_\ord^{(\alpha)}$. Taking into account the  bound
\eqref{C>C} we get that
\begin{equation}
\label{simple-impl3}
A\in\Om_\ord^{(\alpha)}\text{ and }\pi((\bss,A))>0
\implies
\max_{C\in\cC(\bss)}|V(C)|\geq (1-\alpha)L^d\,.
\end{equation}
Together with \eqref{Om-ord-low-bd} and \eqref{mainlem-a}, this
immediately gives the bound \eqref{HB-input2}.

We are thus left with the proof of the bound \eqref{HB-input1}.
To this end, we note that if $|V(A)|\leq \frac 12\alpha L^d$
and $|E(\bss)|{<} |A|+\frac 12\alpha L^d{-1}$ then,
by (\ref{C<C}),  the largest component of $(V,E(\bss))$ has
number of vertices {less than $|V(A)| + |E(\bss)| -
|A|+1$, which in turn is less than}
%%$|V(A)|+\frac 12\alpha L^d\leq $
$\alpha L^d$ . Combined with the bound \eqref{E(bss)>A}, we
conclude that
\[
\pi\Bigl(A\in\Om_\dis^{(\alpha/2)}
\text{ and }\max_{C\in\cC(\bss)}|V(C)|\geq \alpha L^d\Bigr)
\leq
%e^{-c\beta L^{d-1}}.
{e^{-(\tilde\alpha \beta -1)dL^{d}}},
\]
where $\tilde\alpha = \frac{\alpha}{2d}-\frac 1{dL^d}$.
Combined with \eqref{simple-impl3}, this implies that the left
hand side of \eqref{HB-input1} can be bounded by
\[
\mu\Bigl(\Om\setminus
(\Om_\ord^{(\alpha)}\cup\Om_\dis^{(\alpha/2)})\Bigr)
+{e^{-(\tilde\alpha \beta -1)dL^{d}}}.
\]
Together with the bounds \eqref{Om-Tun-bd}, \eqref{mainlem-a}
and \eqref{mainlem-b}, this gives the desired bound
\eqref{HB-input1}. \qed
\end{proof}

\subsection{Proof of the SW bound in Theorem \ref{thm:SW}}
Let $\beta=\beta_0$. Let $S=\{\bss : |E(\bss)| \ge
(1-\alpha)dL^d\}$. We will show that $\Phi_S$
 is exponentially small in $\beta L^{d-1}$, which will establish the theorem. For $q$
(and hence $\beta_0$) large enough, $\pi(S) \ge 1/2$, using
\eqref{SW-input2}. Also, $\pi(S^c) \ge 1/q - e^{-c\beta L} \ge
1/2q$, if $L$ is large enough. Thus
\begin{equation}
\label{SW-cond-bound}
\Phi_S = \frac{Q(S,S^c)}{\mu(S)\mu(S^c)} \le 4q Q(S,S^c).
\end{equation}
Let $S_0 = \{\bss : \alpha dL^d {<}|E(\bss)| {<}
(1-\alpha){d}L^d\}$. Then
\begin{equation}
Q(S,S^c) = Q(S, S^c \setminus S_0) + Q(S, S_0).
\end{equation}
Now
\[ Q(S,S_0) = Q(S_0,S) \le \pi(S_0) \le e^{-c\beta L^{d-1}},\]
using \eqref{SW-input1}, while
\[Q(S, S^c \setminus S_0) = \pi(S) \Pr\bigl({|}E(\bss'){|\leq}
 \alpha dL^d \, | \, {|}E(\bss) {|}
 \ge (1-\alpha)dL^d\bigr),\]
where $\bss$ is a $\mu$-random spin configuration, and $\bss'$
is constructed from one step of the SW algorithm. The above
probability is in turn the probability that at least
$(1-2\alpha)dL^d$ edges are deleted in one step of the SW
algorithm, which is at most $\bigl(2e^{-\beta
(1-2\alpha)}\bigr)^{dL^d}$, using \eqref{simplebound}. Choosing
$\alpha=1/3$, and $q$ large (so that $\beta$ is large)
 and $L$ sufficiently large, yields:
\[Q(S, S^c \setminus S_0) \le \pi(S)\bigl(2e^{-\beta(1-2\alpha)}\bigr)^{dL^d}
\le e^{-c\beta L^{d-1}}.\]
This implies the desired bound on conductance,
\[ \Phi(P^{\rm SW}) \le e^{-c\beta L^{d-1}},\]
which together with \eqref{mix-cond-lb} concludes the proof of
the lower bound \eqref{SW-tau-low} on the mixing time of the SW
algorithm. \qed

\subsection{Proof of the HB bound in Theorem \ref{thm:SW}}

Let $\bss\in [q]^V$ and let $C$ be a component of
$(V,E(\bss))$.  We say that $C$ has color $k$, if $\sigma_x=k$
for all $k\in V(C)$, and denote the set of components of color
$k$ by $\cC_k(\bss)$. For $k=1, \dots, q$ and $0<\alpha<1/2$,
we then define
\[\widehat\Omega_k^{(\alpha)}
=\Bigl\{\bss\in [q]^V\colon
\exists \;C\in\cC_k(\bss)
\;\text{s.t.}\;
|V(C)|\geq (1-\alpha)|V|
\Bigr\}.
\]
Note that the sets $\widehat\Omega_k^{(\alpha)}$ are mutually
disjoint, so by symmetry and the bound \eqref{HB-input2}, we
have
\begin{equation}
\mu\bigl(\widehat\Omega_k^{(\alpha)}\bigr)
\geq
\frac 1{q+1} -\frac 1q
e^{-c\beta L}\,.
\end{equation}
Finally, let
\[
\widehat\Om_\dis^{(\alpha)}
=\Bigl\{\bss\in [q]^V\colon
\max_{C\in \cC(\bss)}|V(C)|\leq \alpha L^d\Bigr\}\,.
\]
We complete our proof by estimating $\Phi_S$ (see (\ref{cond}))
for $S=\widehat\Omega_1^{(\alpha)}$. First notice that for $q$
(and hence also $\beta$) sufficiently large
$\mu(S)\mu({S^c})\geq  1/4q$ so that
\[
\Phi_S\leq 4qQ(S,S^c).
\]
 Since the
heat bath algorithm can only change one vertex at a time, it
does not make transitions between the different sets
$\widehat\Omega_k^{(\alpha)}$. For $\alpha$ small enough, it
cannot make transitions between $\widehat\Omega_1^{(\alpha)}$
and $\Omega^{(\alpha)}_\dis$ either.  Indeed, changing the
color of a single vertex can not break a component $C\in
\cC(\bss)$ into more than $2d+1$ new components: in the worst
case, $C$ gets broken into a single component of size one and
$2d$ components of size $((1-\alpha)L^d-1)/(2d)$. For $\alpha$
sufficiently small (say $\alpha=1/(4d)$), the heat bath
algorithm therefore cannot make transitions between
$\widehat\Omega_1^{(\alpha)}$ and $\Omega^{(\alpha)}_\dis$.
Defining $S_0$ as the set of configurations which are neither
in $\Omega^{(\alpha)}_\dis$ nor in one of the sets
$\widehat\Omega_k^{(\alpha)}$, we thus have
\[
Q(S,S^c)=Q(S,S_0)=Q(S_0,S)\leq \mu(S_0)\leq e^{-c\beta L^{d-1}},
\]
where we have used the bound \eqref{HB-input1} in the last
step. Recalling that $\beta\geq\beta_0=\frac 1d\log
q+O(q^{-1/d})$, we see that for $L$ sufficiently large, we have
$\Phi_S\leq 4q Q(S,S^c)\leq e^{-c\beta L^{d-1}/2}$, as
required.  \qed

%% PT: Note that we are using here that $L$ is sufficiently large.

\bigskip

\noindent {\bf Acknowledgements:}  The authors wish to thank
Marek Biskup, Michael Freedman, Roman Koteck\'y, Fabio
Martinelli and Jacob Lurie for numerous helpful discussions.
The authors also thank their collaborators Alan Frieze, Jeong
Han Kim, Eric Vigoda, and Van Vu for their efforts on earlier
research (reported in \cite{confversion}) related to the
present contribution. {The authors are also grateful to
an anonymous referee  for a very careful review and helpful
suggestions which resulted in an improved exposition.}

\bibliographystyle{latex8}

\appendix

\section{Technical Estimates using Truncation}

Throughout this section, we will assume that
\begin{equation}
{\beta}\geq \max\Bigl\{{C_1\log(dC)},{\frac 1d}\log q-{1}\Bigr\},
\label{kappa-asum2}
\end{equation}
{where $C$ is the constant from Lemma~\ref{lem:cont-estimates}
and $C_1$ is a suitable constant to be chosen in the
course of the proof.}
In fact, we will prove statements (i) and
(ii) of Lemma~\ref{lem:PS} for all $\beta$ such that
\eqref{kappa-asum2} holds {(whether $\beta\geq \beta_0$ or not).}

Also, for the purpose of this appendix, we will use the symbol
$|\La|$ for the cardinality of the set $V\cap\La$, so that
expressions of the form $|V\cap\Ext\Gamma\cap\La|$ can be
simplified to $|\La\cap\Ext\Gamma|$.

\subsection{Truncated Contour Models}

We need some preparation.
We start by bounding the factor
$qe^{-\kappa\|\gamma\|}$ appearing in the weight \eqref{K-dis}
of a contour with external label $\dis$.  To this end, we
observe that the smallest contour with disordered external
label has size $\|\gamma\|\geq 4d-2$.  Combined with the
assumption \eqref{kappa-asum2} and the assumption $d\geq 2$,
this gives
\begin{equation}
\label{raw-K-bd}
\begin{aligned}
qe^{-\kappa\|\gamma\|}
&\leq e^{{d(\beta+1)}}e^{-\kappa\|\gamma\|}
\leq\exp\Bigl(\Bigl(\frac {d\beta+d}{4d-2}-\kappa\Bigr)\|\gamma\|\Bigr)
%%PT -- minor: suppressed the following equation to change $C_1\ge 8$ to $C_1\ge 4$.
%%\\
%%&\leq\exp\Bigl(\Bigl(\frac {4\kappa+2}{6}-\kappa\Bigr)\|\gamma\|\Bigr)
\leq e^{-{\frac\beta 8} \|\gamma\|}\,,
\end{aligned}
\end{equation}
 {where in the last step we assume that $C_1$ is chosen large enough to
 guarantee that  $\kappa=\frac \beta 2 +O(e^{-\beta})\geq  \frac \beta 8 + \frac{1+\beta}3$ .}

As usual in Pirogov-Sinai theory, we next introduce a truncated
model. It is given in terms of the truncated activities
\begin{equation}
\label{K-tr}
K_\ell'(\gamma)
=\min\{K_\ell(\gamma), e^{- {(\frac \beta 8 - c\beta+1)}\|\gamma\|}\}
\end{equation}
 and the corresponding partition functions
\begin{equation}
\label{Z-tr}
Z_\ell'(\La)
=  e^{-e_\ell|\La|}\sum_{\Gamma}
\prod_{\gamma\in\Gamma} K_\ell'(\gamma),
\end{equation}
where $\ell\in\{\ord,\dis\}$, the sum in \eqref{Z-tr} runs over
sets $\Gamma$ of pairwise compatible contours in $\La$ which
all have external label $\ell$, and $c$ is a small enough constant;
{we will choose $c=1/{20}$, implying in particular that
$K_\ell'(\gamma)\leq e^{-c\beta\|\gamma\|}$.}

Let $x\in V$,  let $\gamma_0$ be a contour or an interface, let
$\ell\in\{\ord,\dis\}$. With the help of
Lemma~\ref{lem:cont-estimates}, we then bound
\begin{equation}
\sum_{\gamma:\Int\gamma\ni x}K'_\ell(\gamma)e^{(c\beta+1)\|\gamma\|}
\leq
{\sum_{\gamma:\Int\gamma\ni x}
e^{-( {\frac\beta 8-2c\beta})\|\gamma\|}}
{\le
\sum_{k\geq 1}e^{-( {\frac\beta 8-2c\beta})k}(Cd)^k}
\le 1,
\end{equation}
and
\begin{equation}
\sum_{\gamma:\dist(\gamma,\gamma_0)<1/2}K'_\ell(\gamma)e^{(c\beta+1)\|\gamma\|}
{\le\|\gamma_0\|
\sum_{k\geq 1}e^{-( {\frac\beta 8-2c\beta})k}(Cd)^k}
\le \|\gamma_0\|,
\end{equation}
{provided $C_1$ is sufficiently large.} The above bounds imply
absolute convergence of the cluster expansions for abstract
polymer systems, which in turn gives the existence of the
limits
\begin{equation}
\label{f-prime}
f_\ell=\lim_{L\to\infty} f_\ell^{(L)}
\quad\text{with}\quad
f_\ell^{(L)}=-\frac 1{L^d}\log Z'_\ell(V_{L,d})\,,
%f_\ell^{(L)}\,,
\end{equation}
where $\ell\in\{\ord,\dis\}$ and
%\begin{equation}
%\label{f-prime}
%f_\ell^{(L)}=
%\end{equation}
$V_{L,d}$ denotes the $d$-dimensional torus of sidelength $L$,
see, e.g., \cite{Brydges,B} for a review of cluster expansions
for abstract polymer systems. These methods also imply that,
for $\ell\in\{\ord,\dis\}$, and $\La\subset \bV$ of the form
\eqref{form-La}, we have $|f_\ell-f_\ell^{(L)}|\leq \eps_L$ and
%\begin{equation}
%\Bigl|
%\log Z_\ell'(V_{L,d})+f_\ell L^d
%\Bigr|
%\leq L^d\eps_L=2L^de^{-c\beta L}
%\end{equation}
%and
\begin{equation}
\label{Z-cluster-bd}
\Bigl|
\log Z_\ell'(\La)+f_\ell |\La|
\Bigr|
\leq \|\partial\La\|+\eps_L|\La|\,,
\end{equation}
where, as before, $\eps_L=2e^{-c\beta L}$.
  We will assume that $C_1$ in \eqref{kappa-asum2}
has been chosen in such a way that $L\eps_L\leq 1$.
%%PT: does $c$ depend on $C_1?
%%
%PT: dropped the 2 from $2L\eps_L \leq 1$, resulting from factor 1/2 from{iso1}.

\subsection{Proof of Lemma~\ref{lem:PS} (i) and (ii)}

We first note that $Z_\ell(\La)\ge Z_\ell'(\La)$, so in view of
\eqref{Z-cluster-bd}, we have
\begin{equation}
\label{Zell-low-bd}
Z_\ell(\La)\geq e^{-(f_\ell+\eps_L)|\La|}e^{-\|\partial \La\|}.
\end{equation}
{Next we recall that
 $K'_\ell(\gamma)\leq e^{-c\beta\|\gamma\|}$.  As a consequence
 $K_\ell(\gamma)\leq e^{-c\beta\|\gamma\|}$
whenever $K_\ell(\gamma)=K'_\ell(\gamma)$}.  To prove
Lemma~\ref{lem:PS} (i) and (ii), it is therefore enough to
establish the following lemma:
\begin{lemma}
\label{lem:PS-A1} Under the condition \eqref{kappa-asum2}, we
have that

(i) $ K_\ell(\gamma)=K'_\ell(\gamma)$ whenever $\gamma$ is a
contour with external label $\ell$ and $a_\ell
\,\diam\gamma\leq c\beta$.

(ii) For all $\La$ of the form \eqref{form-La},
\begin{equation}
\label{PS-up2}
\begin{aligned}
Z_\ell(\La)
\leq
e^{(\eps_L-f)|\La|}e^{2\|\partial\La\|}
\max_{\Gamma_\ext}e^{-\frac {a_\ell}2|\La\cap\Ext\Gamma_\ext|}
\prod_{\gamma\in\Gamma_\ext}
e^{-\frac c2\beta\|\gamma\|}\,,
\end{aligned}
\end{equation}
where the maximum goes over sets of mutually external contours
in $\La$ which all have external label $\ell$.
\end{lemma}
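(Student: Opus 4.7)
The plan is to prove (i) and (ii) simultaneously by induction on $|\La|$ (with the corresponding induction on $|\Int\gamma|$ driving part (i)). The base case for (ii) is when $|\La|$ is too small to contain any contour, so only the empty $\Gamma_\ext$ contributes, giving $Z_\ell(\La) = e^{-e_\ell|\La|}$; since $e_\ell \ge f_\ell \ge f$, this satisfies the desired bound trivially.

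For the inductive step, I would first establish (i). Given a contour $\gamma$ with external label $\ell$ satisfying $a_\ell\,\diam\gamma \le c\beta$, the formulas \eqref{K-ord} and \eqref{K-dis} give
\[
K_\ell(\gamma) = e^{-\kappa\|\gamma\|}\, q^{\delta_{\ell,\dis}}\, \frac{Z_{\bar\ell}(\Int\gamma)}{Z_\ell(\Int\gamma)}.
\]
Since $|\Int\gamma|<|\La|$, the inductive hypothesis applies: bound the denominator from below by \eqref{Zell-low-bd}, and bound the numerator from above via the inductive (ii) on $\Int\gamma$ with $\Gamma'_\ext=\emptyset$ (so the max contributes $1$). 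Using $f_\ell-f=a_\ell$, this yields $Z_{\bar\ell}(\Int\gamma)/Z_\ell(\Int\gamma) \le e^{a_\ell|\Int\gamma|+2\eps_L|\Int\gamma|+3\|\gamma\|}$. Now apply the isoperimetric bound $|\Int\gamma|\le \tfrac12\|\gamma\|\diam\gamma$ from \eqref{iso1}, the hypothesis $a_\ell\diam\gamma\le c\beta$, the estimate $\eps_L|\Int\gamma|\le \tfrac12\eps_L L\|\gamma\|\le \|\gamma\|/2$ (using $L\eps_L\le 1$ and $\diam\gamma\le L$), and \eqref{raw-K-bd}, to obtain $K_\ell(\gamma)\le e^{-(\beta/8-c\beta/2-4)\|\gamma\|}$. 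For $\beta$ large enough, this is dominated by $e^{-(\beta/8-c\beta+1)\|\gamma\|}$, so $K_\ell(\gamma)=K'_\ell(\gamma)$.

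Next I would prove (ii). Starting from \eqref{Z-ord-ind1} and its analog for $Z_\dis$, write
\[
Z_\ell(\La) = \sum_{\Gamma_\ext} e^{-e_\ell|\La\cap\Ext\Gamma_\ext|}\prod_{\gamma\in\Gamma_\ext} e^{-\kappa\|\gamma\|}\,q^{\delta_{\ell,\dis}}\,Z_{\bar\ell}(\Int\gamma).
\]
Apply the inductive (ii) to each $Z_{\bar\ell}(\Int\gamma)$ with $\Gamma'_\ext=\emptyset$, invoke \eqref{raw-K-bd} combined with $\beta/8-2\ge c\beta$ (valid for $\beta$ large), and use $|\La\cap\Ext\Gamma_\ext|+\sum_\gamma|\Int\gamma|=|\La|$ and $e_\ell-f\ge a_\ell$. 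This gives
\[
Z_\ell(\La) \le e^{(\eps_L-f)|\La|}\sum_{\Gamma_\ext} e^{-a_\ell|\La\cap\Ext\Gamma_\ext|}\prod_{\gamma\in\Gamma_\ext} e^{-c\beta\|\gamma\|}.
\]
Split each $e^{-a_\ell(\cdot)}e^{-c\beta\|\gamma\|}$ as a square-root of itself times a square-root, pull the single max out, and absorb the remaining sum $\sum_{\Gamma_\ext}\prod e^{-c\beta\|\gamma\|/2}$ into a cluster-expansion bound of the form $e^{\eps_L|\La|+\|\partial\La\|}$. With constants absorbed, this yields \eqref{PS-up2}.

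The main obstacle is the final cluster-expansion step: one needs the polymer partition function built out of \emph{mutually external} contours (weighted by $e^{-c\beta\|\gamma\|/2}$) to obey the same $e^{\eps_L|\La|+\|\partial\La\|}$ bound as the truncated model $Z'_\ell$, including the boundary term. This requires the counting estimates of Lemma~\ref{lem:cont-estimates} (contours of size $k$ through a fixed vertex number at most $(Cd)^k$), together with the assumption $\beta\ge C_1\log(Cd)$ to ensure convergence, and a careful tracking of contours close to $\partial\La$ to produce the $\|\partial\La\|$ surface contribution, exactly as in the standard derivation of \eqref{Z-cluster-bd}. The delicate point is that nothing in these cluster expansion bounds relies on embeddability into $\bbR^d$; they depend only on the Peierls estimates of Lemma~\ref{lem:cont-estimates} and the partial-order structure from Lemma~\ref{lem:partial-order}, which are exactly the topological inputs the paper has just developed.
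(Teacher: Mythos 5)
Your part (i) is essentially the paper's computation and is fine, but your part (ii) has a genuine gap at its last step. After expanding over \emph{all} external contours and bounding the interior partition functions by induction, you are left (after the square-root/max trick) with the sum $\sum_{\Gamma_\ext}\prod_{\gamma\in\Gamma_\ext}e^{-\frac{c\beta}{2}\|\gamma\|}$ over sets of mutually external contours in $\La$, which you propose to absorb into a bound of the form $e^{\eps_L|\La|+\|\partial\La\|}$. That bound is false: this sum is the partition function of a dilute contour gas, and its logarithm grows like $\epsilon(\beta)\,|\La|$ with $\epsilon(\beta)\sim e^{-c\beta/2}$ \emph{independent of $L$} (already the configurations made of many well-separated minimal contours contribute a factor $e^{c'|\La|e^{-C\beta}}$), whereas $\eps_L=2e^{-c\beta L}$ is exponentially small in $L$. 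The estimate \eqref{Z-cluster-bd} delivers an error of size $\eps_L|\La|+\|\partial\La\|$ only \emph{after} the bulk free energy $f_\ell|\La|$ has been subtracted; for the raw contour sum no such bound holds. And a volume error with an $L$-independent rate destroys the applications: in the proof of Lemma~\ref{lem:Con-sum} the factor $e^{\eps_L L^d}$ must stay $O(1)$ so that the surface suppression $e^{-\kappa L^{d-1}}$ survives, while $e^{\epsilon(\beta)L^d}$ would swamp it.

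The paper circumvents exactly this problem by splitting external contours into \emph{small} ($a_\ell\,\diam\gamma\le c\beta$) and \emph{large}. All configurations whose external contours are small are re-summed into $Z^{(\text{small})}_\ell(\Ext\Gamma_{\text{large}})\le Z'_\ell(\Ext\Gamma_{\text{large}})$ --- legitimate because, by part (i) and the fact that any contour below a small contour in the partial order is itself small, every weight occurring there equals its truncated version --- and this truncated partition function is controlled by \eqref{Z-cluster-bd}, so the entropy of small contours is absorbed into $f_\ell$ rather than appearing as an error term. Only the large contours are then summed explicitly, and their entropy per unit volume is harmless because a large contour has $\|\gamma\|\ge 2c\beta/a_\ell$ by \eqref{g-larger-diam-g}, so the free energy $\tilde f$ of the auxiliary large-contour gas obeys $|\tilde f|\le a_\ell/2$ (Lemma~\ref{lem:a-lem}) and can be absorbed into the factor $e^{-\frac{a_\ell}{2}|\La\cap\Ext\Gamma_\ext|}$ --- which is precisely why \eqref{PS-up2} carries $a_\ell/2$ rather than $a_\ell$. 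Your square-root trick discards exactly this exterior factor in the remaining sum, and even retaining it would not help when $a_\ell=0$ (the stable phase), where small contours still carry constant-rate entropy. Repairing your argument means reinstating the small/large dichotomy and the auxiliary model $\tilde Z,\tilde f$, i.e.\ essentially reproducing the paper's proof.
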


\begin{proof}
We prove the lemma by induction on the levels of $\La$ and
$\gamma$:  Here the level of a set $\La$ is defined to be zero
if there are no contours $\gamma$ such that $\gamma$ is a
contour in $\La$. The level of a set $\La$ is defined to be $k$
if the highest level of a contour $\gamma$ in $\La$ is $k-1$,
with the level of a contour inductively defined to be $1$ plus
the level of its interior. Note that with this definition, the
levels of two contours $\gamma,\gamma'$ with $\gamma<\gamma'$
differ by at least two.

Assume that $\La$ has level $0$.  Recall the definition of $e_\ell$  from \eqref{e-dis-def} and \eqref{e-ord-def}. Then $Z_\ell(\La)=e^{-e_\ell |\La|} \leq e^{-f_\ell|\La|}= e^{-f|\La|} e^{-a_\ell|\La|}$, where the inequality may be seen as follows: first for a fixed $L$, consider $Z_\ell^\prime$ with $\Lambda$ being the entire torus of size $L^d$; the term with no contours gives a contribution $e^{-e_\ell L^d}$, implying the desired bound for $f_l^{(L)}$.  Taking the limit $L\to \infty$, one gets the inequality.
This proves \eqref{PS-up2} for sets $\La$ of level $0$,  the base case.

Next assume that the bound \eqref{PS-up2} in (ii) has been
proven for all sets $\La$ of level $k$ or less.  If $\gamma$
has level $k+1$ or less {and label $\ell=\dis$}, then
\[
\begin{aligned}
K_\dis(\gamma)
&\leq e^{-{\frac\beta 8}\|\gamma\|}
\frac{Z_\ord(\Int\gamma)}{Z_\dis(\Int\gamma)}
\\
&\leq e^{(a_\ell+2\eps_L)|\Int\gamma|}e^{-(\frac\beta8-3)\|\gamma\|}
\leq
e^{-(\frac\beta8-3-\frac12(a_\ell+2\eps_L)\diam\gamma)\|\gamma\|}\,,
\end{aligned}
\]
where we used \eqref{raw-K-bd} in the first inequality, the
inductive assumption \eqref{PS-up2} and the bound
\eqref{Zell-low-bd} in the second, and the bound \eqref{iso1}
in the last.  Bounding $\eps_L\diam\gamma$ by $L\eps_L\leq 1$
and using the assumption $a_\ell \,\diam\gamma\leq c\beta$,
this gives $K_\dis(\gamma)\leq
e^{-({\frac\beta 8}-4-{\frac c2}\beta)\|\gamma\|}$ and hence
$K_\dis(\gamma)=K'_\dis(\gamma)$ {(again provided $C_1$ is sufficiently large)}.
The bound for contours with
ordered external label is exactly the same.

Finally, assume that $\La$ has level $k+2$, that (i) has been
proven for all contours of level at most $k+1$, and that (ii)
has been proven for all sets of level at most $k$. Define a
contour $\gamma$  with external label $\ell$ to be \emph{small}
if $a_\ell \,\diam\gamma\leq c\beta$, and \emph{large}
otherwise. Consider the representation \eqref{Z-dis-La} for
$Z_\dis(\La)$, and fix, for a moment, the set
$\Gamma_{\text{large}}$ of all large external contours
contributing to the right hand side. Summing over the remaining
contours, we get a factor of $Z_\ord(\Int\gamma)$ for the
interior of each contour $\gamma\in\Gamma_{\text{large}}$, as
well as a factor
$Z_{\dis}^{(\text{small})}(\Ext\Gamma_{\text{large}})$ for the
exterior of $\Gamma_{\text{large}}$, where
$Z_\dis^{(\text{small})}(\La')$ is obtained from $Z_\dis(\La')$
by dropping all configurations with large external contours.
Thus
\begin{equation}
\label{Z-dis-l-s}
Z_\dis(\La)
=\sum_{\Gamma_{\text{large}}}
Z_{\dis}^{(\text{small})}(\Ext\Gamma_{\text{large}})
\prod_{\gamma\in\Gamma_{\text{large}}}
qe^{-\kappa\|\gamma\|} Z_\ord(\Int\gamma)\,,
\end{equation}
where the sum goes over sets of mutually external, large
contours with disordered external label.

Since $\gamma$ is small whenever $\gamma<\gamma'$ and $\gamma'$
is a {small} contour with the same external label as $\gamma$, the
representation \eqref{Z-dis-alt} for
$Z_{\dis}^{(\text{small})}(\Ext\Gamma_{\text{large}})$ contains
only small contours, implying that for all these contours
$K_\dis(\gamma)=K_\dis'(\gamma)$.  As a consequence,
$$Z_{\dis}^{(\text{small})}(\Ext\Gamma_{\text{large}})
\leq Z_{\dis}'(\Ext\Gamma_{\text{large}})\,,$$ which allows us to use the
estimate \eqref{Z-cluster-bd} to estimate the factor
$Z_{\dis}^{(\text{small})}(\Ext\Gamma_{\text{large}})$ in
\eqref{Z-dis-l-s}.  Using the inductive assumption (ii) to
bound the factors $Z_\ord(\Int\gamma)$ and the bound
\eqref{raw-K-bd} to estimate the factors
$qe^{-\kappa\|\gamma\|}$, this gives
\[
\begin{aligned}
Z_\dis(\La)&\leq
\sum_{\Gamma_{\text{large}}}
e^{(\eps_L-f_\dis)|\La\cap\Ext\Gamma_{\text{large}}|}
e^{\|\partial\Ext\Gamma_{\text{large}}\|}
\prod_{\gamma\in\Gamma_{\text{large}}}
e^{-({\frac\beta 8}-2)\|\gamma\|} e^{(\eps_L-f)|\Int\gamma|}
\\
&=e^{(\eps_L-f)|\La|+\|\partial\La\|}
\sum_{\Gamma_{\text{large}}}
e^{-a_\dis|\La\cap\Ext\Gamma_{\text{large}}|}
\prod_{\gamma\in\Gamma_{\text{large}}}
e^{-({\frac\beta 8}-3)\|\gamma\|}\,.
\end{aligned}
\]
In order to prove statement (ii), we will have to show that
\begin{equation}
\label{main-ind-bd}
\sum_{\Gamma_{\text{large}}}
e^{-\frac 12 a_\dis|\La\cap\Ext\Gamma_{\text{large}}|}
\prod_{\gamma\in\Gamma_{\text{large}}}
e^{-({\frac\beta 8}-3- \frac{c\beta}{2})\|\gamma\|}
\leq e^{\|\partial\La\|}\,.
\end{equation}
To this end, we define
\[
\tilde K(\gamma)
=\begin{cases}
e^{-({\frac\beta 8}-4- \frac{c\beta}{2})\|\gamma\|}
&\text{if $\gamma$ is a large contour with external label $\dis$}
\\
0&\text{otherwise,}
\end{cases}
\]
and
\[
\tilde Z(\La')
=
\sum_{\Gamma}\prod_{\gamma\in\Gamma} \tilde K(\gamma)\,,
\]
where the sum  runs over sets $\Gamma$ of pairwise compatible
contours in $\La'$ which all have external label $\dis$.  We
also define
\[
\tilde f=-\frac 1{L^d}\log\tilde Z(V_{L,d}).
\]

{We will need the following lemma, whose proof we defer to Appendix~\ref{sec:a-lemma}\,.
\begin{lemma}
\label{lem:a-lem}
Let $\Lambda'$ be of the form \eqref{form-La}.  Then
\begin{equation}
\label{tilde-Z-bd}
\Bigl|\log\tilde Z(\La')+\tilde f|\La'|\Bigr|
\leq \|\partial\La'\|.
\end{equation}
Furthermore $-\tilde f=|\tilde f|\leq \frac {a_\dis}2$.
\end{lemma}
}

{Recalling the definition of
$\tilde{K}(\gamma)$, we now use Lemma~\ref{lem:a-lem}}
to bound the left hand side of
\eqref{main-ind-bd}
 by \\

%%PT: rewrote explanation next to the inequalities
{%\begin{eqnarray*}
%\[
%\begin{aligned}
$\displaystyle \sum_{\Gamma_\ext} e^{\tilde
f|\La\cap\Ext\Gamma_\ext|} \prod_{\gamma\in\Gamma_\ext}
e^{-\|\gamma\|}\tilde K(\gamma)$
\begin{eqnarray*}
& \leq &
\sum_{\Gamma_\ext}
e^{\tilde f|\La\cap\Ext\Gamma_\ext|}
\prod_{\gamma\in\Gamma_\ext}
\tilde K(\gamma) e^{\tilde f|\Int\gamma|}\tilde Z(\Int\gamma)\,,  \ \mbox{ (applying \eqref{tilde-Z-bd} to $\Int\gamma$)}
\\
&=&
e^{\tilde f|\La|}
\sum_{\Gamma_\ext}
\prod_{\gamma\in\Gamma_\ext}
\tilde K(\gamma) \tilde Z(\Int\gamma)\\
&=& e^{\tilde f|\La|}\tilde Z(\La)
\leq e^{\|\partial\La\|}, \ \ \mbox{ (once again by \eqref{tilde-Z-bd})\,,}
%\end{aligned}
%\]
\end{eqnarray*}
proving \eqref{main-ind-bd}.  Note that in the above,  the sums
run over sets of mutually external contours, all of which have
external label $\dis$.}
% the two inequalities follow from the bound
%\eqref{tilde-Z-bd}.

 This concludes the proof of (ii) for sets $\La$ of level $k+2$ and
 $\ell=\dis$.  The proof of (ii) for $\ell=\ord$ is identical.

\end{proof}

\subsection{Proof of Lemma~\ref{lem:a-lem}}
\label{sec:a-lemma}
{As before, one can use the forest structure
of sets of pairwise compatible contours to rewrite $\tilde
Z(\La')$ as a sum over sets of mutually external contours in
$\La'$:
\[
\tilde Z(\La')
=
\sum_{\Gamma_\ext}\prod_{\gamma\in\Gamma_\ext} \tilde K(\gamma)\tilde Z(\Int\gamma).
\]
For $C_1$  sufficiently large, the partition function $\tilde
Z(\La')$ can again be analyzed by convergent Mayer expansion,
leading to the bound \eqref{tilde-Z-bd}
(the term proportional to $\eps_L$ is absent since we defined
$\tilde f$ without taking the limit $L\to\infty$).

To bound $\tilde{f}$, we use that
\[ e^{- \tilde{f} L^d}  = \tilde Z(V_{L,d}) \ \le \ \sum_{n=0}^{\infty}
\frac{1}{n!} \sum_{(\gamma_1, \ldots , \gamma_n)} \prod_{i=1}^n \tilde K(\gamma_i)
\ = \
\exp \bigl(\sum_\gamma \tilde K(\gamma) \bigr)\,,\]
where the first sum goes over (not necessarily compatible)
sequences of contours in $V_{L,d}$, with external label $\dis$.
To bound the sum in the exponent, we use that $\tilde
K(\gamma)=0$, unless $\gamma$ is large, which by
\eqref{g-larger-diam-g} implies that $\|\gamma\| \ge k_0 =
2c\beta/a_{\dis}$.
{Furthermore, if $\gamma$ is a contour in $V_{L,d}$ it must be incompatible with
one of the $L^d$ contours obtained by considering configurations with one disordered edge in the $1$-direction. Since
all of these have size $2$, we may use
Lemma~\ref{lem:cont-estimates} to bound the number of contours of size $\|\gamma\|=k$ by
$2L^d (Cd)^k$.  Using these two observations, we get}
%%there are at most $(Cd)^k$ contours of length $k$ containing a fixed vertex; as a result, we can bound
\begin{eqnarray*}
\sum_{\gamma \ {\rm large}} \tilde K(\gamma)  &\le &
{2L^d}\sum_{\gamma : \|\gamma\|=k\ge k_0}
(Cd)^k e^{-({\frac\beta 8}-4- \frac{c\beta}{2}) k}\\
&\le & {2L^d}\sum_{k\ge k_0} \Bigl[(Cd) e^{-({\frac\beta 8}-4- \frac{c\beta}{2})} \Bigr]^k\\
&\le & {2L^d}\sum_{k\ge k_0} \Bigl[ \frac{1}{4} \ e^{- c\beta} \Bigr]^k
\le {L^d}e^{-{c\beta k_0}}\,,
\end{eqnarray*}
again provided that $C_1$ is sufficiently large.} Thus
we have that
\[
-\tilde f=|\tilde f|\leq {e^{-{c\beta k_0}}\leq  \frac{1}{c\beta k_0}} = \frac{a_\dis}{{2}(c\beta)^2}
\leq \frac {a_\dis}2.
\]

\subsection{Proof of Lemma~\ref{lem:PS} (iii)}

%This is by now pretty standard.

We start with the observation that the weights
$K_\ell(\gamma)$, and hence the weights $K_\ell'(\gamma)$ are
continuous functions of $\beta$.  Since the free energies
$f_\ell$ are given in terms of an absolutely convergent power
series in the weights $K'_\ell(\gamma)$, they are continuous
functions of $\beta$ as well. Taking into account this
continuity, the following lemma immediately implies
Lemma~\ref{lem:PS} (iii).  {Recall the definition of
$M(\beta)$ from the introduction -- see below \eqref{mu1bd}.}

\begin{lemma}
\label{lem:PS-A2} Assume that \eqref{kappa-asum2} holds.

(i) If $a_\ord=0$, then $M(\beta)>0$.

(ii) If $a_\ord>0$, then $M(\beta)=0$.
\end{lemma}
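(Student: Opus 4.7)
The plan is to reduce the infinite-volume magnetization to a random-cluster connectivity probability on the box $\LA=\LA_L$ with wired (1-)boundary, and then apply the Pirogov--Sinai bounds of Lemma~\ref{lem:PS}.

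\emph{Reduction.} The Edwards--Sokal coupling yields
\[\mu_{\LA,1}(\sigma_x=1)-\frac 1q=\Bigl(1-\frac 1q\Bigr)\,\nu_{\LA,1}\bigl(x\leftrightarrow\partial\LA\bigr),\]
since $\sigma_x$ is deterministically equal to $1$ when $x$'s FK-component touches the wired boundary and is uniform on $[q]$ otherwise. In the contour representation, the 1-boundary forces the outermost phase to be ordered with color $1$, and by Lemma~\ref{lem:simple}(iii) the FK-component of $\partial\LA$ coincides with this outermost ordered region. Consequently $\{x\leftrightarrow\partial\LA\}=\{x\in\Ext\Gamma_\ext\}$, where $\Gamma_\ext$ is the set of external contours (each carrying external label $\ord$). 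Proving (i) reduces to showing $\nu_{\LA,1}(x\notin\Ext\Gamma_\ext)\le\tfrac12$ uniformly in $L$; proving (ii) reduces to showing that this probability tends to $1$ as $L\to\infty$ for $x$ in the bulk.

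\emph{Part (i): $a_\ord=0$.} By Lemma~\ref{lem:PS}(i), $K_\ord(\gamma)\le e^{-c\beta\|\gamma\|}$ for every contour with external label $\ord$, since $a_\ord\diam\gamma=0$ trivially satisfies the hypothesis. This uniform smallness of the polymer activity in the representation \eqref{Z-ord-alt} makes the cluster expansion of $Z_\ord(\LA)$ absolutely convergent, and a standard Peierls-type estimate yields $\nu_{\LA,1}(\gamma\in\Gamma_\ext)\le 2K_\ord(\gamma)$ for $\beta$ large. Summing over contours with $x\in\Int\gamma$ via Lemma~\ref{lem:cont-estimates}(iii),
\[\nu_{\LA,1}\bigl(x\notin\Ext\Gamma_\ext\bigr)\le 2\sum_{k\ge 2d}(Cd)^k e^{-c\beta k}\le\tfrac12,\]
uniformly in $L$ for $\beta$ large. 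Hence $M_\LA(\beta)\ge (1-1/q)/2$ and $M(\beta)>0$.

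\emph{Part (ii): $a_\ord>0$.} The activity $K_\ord(\gamma)$ now grows like $e^{a_\ord|\Int\gamma|}$, reflecting metastability of the ordered phase, so the convergent Peierls approach is not available. The strategy is to exhibit a macroscopic ``wrap'' configuration to pin the normalization: fix a contour $\gamma_0$ hugging $\partial\LA$ with $\|\gamma_0\|=O(L^{d-1})$, $|\Int\gamma_0|\ge(1-\eps)L^d$, and $x\in\Int\gamma_0$. Combining \eqref{PS-lb1} for $Z_\dis(\Int\gamma_0)$, \eqref{PS-up1} for $Z_\ord(\Int\gamma_0)$, and $f=f_\dis$ (valid since $a_\ord>0$), one obtains $K_\ord(\gamma_0)\ge e^{a_\ord(1-\eps)L^d-O(\beta L^{d-1})}$, giving the tight lower bound $Z_\ord(\LA)\ge e^{-e_\ord|\LA|}K_\ord(\gamma_0)$. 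Simultaneously, bound the numerator $Z_\ord^{\{x\in\Ext\Gamma_\ext\}}(\LA)$ using \eqref{PS-up1}, where the exterior-area penalty $e^{-(a_\ord/2)|\LA\cap\Ext\Gamma_\ext|}$ applies to every admissible $\Gamma_\ext$. Comparing and summing over $\Gamma_\ext$ with $x\in\Ext\Gamma_\ext$ (using the contour-entropy bounds of Lemma~\ref{lem:cont-estimates} to control the combinatorics), the resulting ratio is exponentially small in $L^d$ for $x$ in the bulk. Thus $\nu_{\LA,1}(x\in\Ext\Gamma_\ext)\to 0$ and $M(\beta)=0$.

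\emph{Main obstacle.} Part~(ii) is the hard direction. When $a_\ord>0$, ordered-labeled polymer weights cease to be uniformly small, so the cluster expansion no longer controls the probability of a specific contour being external, and one cannot argue ``all contours are rare.'' The resolution hinges on the exterior-area penalty $e^{-(a_\ord/2)|\LA\cap\Ext\Gamma_\ext|}$ built into \eqref{PS-up1}, which precisely quantifies the free-energy advantage of the disordered phase, paired with the explicit macroscopic wrap construction to tighten the lower bound on $Z_\ord(\LA)$. Together these show that, in the unstable regime, the plus-state on $\LA_L$ is effectively a perturbation of the infinite-volume disordered state with a thin ordered shell adjacent to the forced boundary, forcing the bulk magnetization to vanish.
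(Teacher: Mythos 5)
Your reduction of $M_\LA(\beta)$ to the $\nu_{\LA,1}$-probability that $x$ is connected to the wired boundary, i.e.\ that $x\in\Ext\Gamma$, is exactly the paper's starting point, and your Part (i) is the paper's argument: when $a_\ord=0$ all ordered-labeled weights satisfy $K_\ord(\gamma)\le e^{-c\beta\|\gamma\|}$, and a standard Peierls sum over contours surrounding $x$ (via Lemma~\ref{lem:cont-estimates}(iii)) gives a uniform positive lower bound on the magnetization. That part is fine.

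Part (ii), however, has a genuine gap, in two places. First, the intermediate claim $K_\ord(\gamma_0)\ge e^{a_\ord(1-\eps)L^d-O(\beta L^{d-1})}$ is false: \eqref{PS-up1} bounds $Z_\ord(\Int\gamma_0)$ only by $e^{(-f+\eps_L)|\Int\gamma_0|}$ (times surface terms), \emph{not} by $e^{-f_\ord|\Int\gamma_0|}$ --- in the unstable regime $Z_\ord(\Int\gamma_0)$ is itself dominated by configurations containing a large contour flipping to the disordered phase, so $Z_\dis(\Int\gamma_0)/Z_\ord(\Int\gamma_0)$ is only bounded below by $e^{-O(L^{d-1})}$. (This error is repairable: restricting $Z_\ord(\La)$ to the single external contour $\gamma_0$ hugging the boundary and using \eqref{PS-lb1} for $Z_\dis(\Int\gamma_0)$ directly gives the bound one actually needs, $Z_\ord(\La)\ge e^{-fL^d-O(\beta L^{d-1})}$, which is what the paper does.) Second, and fatally, the event $\{x\in\Ext\Gamma_\ext\}$ does \emph{not} force $|\LA\cap\Ext\Gamma_\ext|$ to be of order $L^d$: the exterior only has to be a connected set joining the boundary to $x$, and a thin tube of exterior of volume $O(L)$ (or a boundary shell plus tube, of volume $O(L^{d-1})$) suffices. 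Hence the exterior-area penalty you extract from \eqref{PS-up1} is only $e^{-\frac{a_\ord}{2}O(L^{d-1})}$, and the extra contour-surface penalty is only $e^{-\frac{c}{2}\beta\,O(L^{d-1})}$ with the \emph{small} constant $c$ of Lemma~\ref{lem:PS}; neither beats the surface-order deficit $e^{-2d(\kappa+O(1))L^{d-1}}\approx e^{-d\beta L^{d-1}}$ in the lower bound on $Z_\ord(\La)$, so your claimed ratio ``exponentially small in $L^d$'' is unjustified (and indeed the tube configuration shows $\nu_{\LA,1}(x\in\Ext\Gamma)$ is at least $e^{-O((\beta+a_\ord)L)}$, so it is certainly not $e^{-cL^d}$). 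The paper avoids the pointwise estimate altogether: it bounds the \emph{expected} exterior fraction $|\LA\cap\Ext\Gamma|/|\LA|$, splitting on whether $|\LA\cap\Ext\Gamma_{\text{large}}|\ge L^{d-1/2}$. If not, the fraction is $\le L^{-1/2}$; if so, one genuinely has a volume $\ge L^{d-1/2}$ to which the penalty $e^{-\frac{a_\ord}{2}|\LA\cap\Ext\Gamma_\ext|}$ applies, and since $L^{d-1/2}\gg L^{d-1}$ this beats the $O(\beta L^{d-1})$ deficit of $Z_\ord(\La)$, giving $M_\LA(\beta)\to 0$. If you want to keep your scheme, replace the pointwise event $\{x\in\Ext\Gamma_\ext\}$ by this averaged/large-exterior event.
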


\begin{proof}
At this point, the proof of Lemma~\ref{lem:PS-A2} is pretty
standard. We therefore only sketch the main steps.

First, we note that for $\LA=\LA_L=\{1,\dots,L\}^d$, the
representations \eqref{FKmes} and \eqref{ESmes} can be
generalized to the model with $1$-boundary conditions defined
in \eqref{mu1bd}. Indeed, let $G_+$ be the induced graph on
$\LA_+=\{0,1,\dots,L+1\}\subset \bbZ^d$. The Edwards-Sokal
measure $\pi_{\Lambda,1}$ corresponding to $\mu_{\Lambda,1}$
can then be obtained from the measure $\pi_{G_+}$ by
conditioning on $\sigma_x=1$ for all $x\in\LA_+\setminus\LA$
and $xy\in A$ whenever $\{x,y\}\subset \LA_+\setminus\LA$.
Next, we observe that in the conditional measure
$\pi_{G_+}(\cdot\mid A)$, a spin at a vertex $x\in\Lambda$ has
probability $1/q$ of taking the value $1$ unless $x$ lies in
the same component of $(\LA_+,A)$ as $\LA_+\setminus\LA$, in
other words, unless $x\in\Ext(A)$.  Keeping these two
observations in mind, the derivation of the representation
\eqref{Z-ord-alt} can easily be adapted to obtain a contour
representation for the magnetization.  Setting $\La=(-\frac
14,L+\frac 54)^d\subset\bbR^d$, this gives
\[
M_\LA(\beta)=\Bigl(1-\frac 1q\Bigr)
\frac {1}{Z_\ord(\La)}\,
e^{-e_\ord|\LA_+|}
\sum_{\Gamma}\frac{ |\LA \cap\Ext\Gamma|}{|\LA|}
\prod_{\gamma\in\Gamma}K_{\ord}(\gamma),
\]
where the sum goes over sets of pairwise compatible contours in
$\La$ with external label $\ord$.  Note that we have chosen
$\La$ in such a way that all edges in $\LA_+\setminus\LA$ lie
in $\Ext\gamma$ whenever $\gamma$ is a contour in $\La$,
corresponding to the above conditioning in $\pi_{G_+}$.

If $a_\ord=0$, the weights $K_\ord(\gamma)$ are bounded by
$e^{-c\beta\|\gamma\|}$ for all $\gamma$.  As a consequence, we
may use a standard Peierls argument to show that the
probability that a given point $x\in\LA$ lies \emph{not} in
$\Ext\Gamma$ is small uniformly in $L$ and $x\in\LA_L$,
implying that $M(\beta)>0$, which proves (i).

Assume finally that $a_\ord>0$ (which implies in particular
that $a_\dis=0$). We will show  that with probability tending
to one, $|\LA\cap\Ext\Gamma|\leq L^{d-\eps}$. Since the ratio
$|\LA\cap\Ext\Gamma|/|\LA|$ in the definition of the
magnetization is bounded uniformly in $L$, this will show that
$M_\LA(\beta)\to 0$ as $L\to\infty$.

Recall the definition of large contours from the last proof,
and let $\Gamma_{\text{large}}$ be the set of external contours
in $\Gamma$ which are large. Then $|\LA\cap\Ext\Gamma|\leq
|\LA\cap\Ext\Gamma_{\text{large}}|$, implying that it will be
enough to show that  the probability that
$|\LA\cap\Ext\Gamma_{\text{large}}|\geq L^{d-1/2}$ goes to zero
as $L\to\infty$. To bound this probability, we will prove an
upper bound on the sum over contours with
$|\LA\cap\Ext\Gamma_{\text{large}}|\geq L^{d-1/2}$ and a lower
bound on $Z_\ord(\La)$.

To obtain the desired upper bound, we proceed as in the proof
of Lemma~\ref{lem:PS-A1} (ii), leading to the estimate
\begin{equation}
\label{M-bd}
\begin{aligned}
e^{-e_\ord|\LA_+|}
&\sum_{\Gamma:
\atop
|\LA\cap\Ext\Gamma_{\text{large}}|> L^{d-1/2}}
\prod_{\gamma\in\Gamma}K_{\ord}(\gamma)\leq
\\
&\leq
e^{-f|\LA_+|}e^{2\|\partial\La\|}
\max_{\Gamma_\ext:
\atop
|\La\cap\Ext\Gamma_\ext|>L^{d-1/2}}
e^{-\frac {a_\ell}2|\La\cap\Ext\Gamma_\ext|}
\prod_{\gamma\in\Gamma_\ext}
e^{-\frac c2\beta\|\gamma\|}
\\
&\leq
e^{-f|\LA_+|}e^{2\|\partial\La\|}
e^{-\frac{ a_\ord}2 L^{d-1/2}}\,,
\end{aligned}
\end{equation}
where the maximum in the second to last line goes over sets of
mutually external contours in $\La$ which all have external
label $\ord$.

To bound $Z_\ord(\La)$ from below we restrict the sum in
\eqref{Z-ord-alt0} to a single term, the term
$\Gamma_\ext=\{\gamma_0\}$, where $\gamma_0$ is the contour
$\gamma_0=\partial [1/4,L+3/4]^d$.  This gives
\[
Z_\ord(\La)
\geq
e^{-e_\ord|\LA_+\setminus\LA|}
e^{-\kappa\|\gamma_0\|}
Z_\dis(\Int\gamma_0)
\geq e^{-fL^d} e^{-2d(\kappa+1+O(e^{-\beta}))L^{d-1}},
\]
where we used the bound \eqref{Zell-low-bd} and the fact that
$a_\dis=0$ in the second step. After extracting the leading
contribution $e^{-fL^d}$, the right hand side falls at most
like an exponential in $L^{d-1}$, while the corresponding decay
in \eqref{M-bd} is exponential in $L^{d-1/2}$.  This proves
that the ratio of \eqref{M-bd} and $Z_\ord(\La)$ goes to zero
as $L\to\infty$, as desired, completing the proof of the lemma.
\end{proof}

\end{document}